\title{Gaussian quantum Markov semigroups on finitely many modes admitting a normal invariant state}
\date{}
\author[1]{Federico Girotti}
\ead[1]{federico.girotti@polimi.it}
\affiliation[1]{organization={Dipartimento di Matematica, Politecnico di Milano},
addressline={Via Edoardo Bonardi 9},
postcode={20133},
city={Milano},
country={Italy}}
\author[2]{Damiano Poletti}
\ead[2]{damiano.poletti@unige.it}
\affiliation[2]{organization={Dipartimento di Matematica, Universita di Genova},
addressline={Via Dodecaneso 35},
postcode={16146},
city={Genova},
country={Italy}}
\newtheorem{defi}{Definition}
\newtheorem{lemma}[defi]{Lemma}
\newtheorem{coro}[defi]{Corollary}
\newtheorem{prop}[defi]{Proposition}
\newtheorem{theo}[defi]{Theorem}
\newtheorem{rem}{Remark}
\theoremstyle{definition}
\def\hh{\mathfrak{h} }
\def\nn{\mathbb{N}}
\def\CC{\mathbb{C}}
\def\RR{\mathbb{R}}
\def\FS{\Gamma(\CC^d)}
\def\TT{\mathcal{T}}
\def\NN{\mathcal{N}(\TT)}
\def\hh{\mathfrak{h}}
\def\tr{{\rm tr}}
\def\LD{L^2_s(\omega_\infty)}
\begin{document}
\begin{abstract}
Gaussian quantum Markov semigroups (GQMSs) are of fundamental importance in modelling the evolution of several quantum systems. Moreover, they represent the noncommutative generalization of classical Orsntein-Uhlenbeck semigroups; analogously to the classical case, GQMSs are uniquely determined by a ``drift" matrix $\mathbf{Z}$ and a ``diffusion" matrix $\mathbf{C}$, together with a displacement vector $\mathbf{\zeta}$. In this work, we completely characterize those GQMSs that admit a normal invariant state and we provide a description of the set of normal invariant states; as a side result, we are able to characterize quadratic Hamiltonians admitting a ground state. Moreover, we study the behavior of such semigroups for long times: firstly, we clarify the relationship between the decoherence-free subalgebra and the spectrum of $\mathbf{Z}$. Then, we prove that environment-induced decoherence takes place and that the dynamics approaches an Hamiltonian closed evolution for long times; we are also able to determine the speed at which this happens. Finally, we study convergence of ergodic means and recurrence and transience of the semigroup.  
\end{abstract}
\maketitle

\makeatletter
\def\ps@pprintTitle{%
  \let\@oddhead\@empty
  \let\@evenhead\@empty
  \let\@oddfoot\@empty
  \let\@evenfoot\@oddfoot
}
\makeatother

\section{Introduction}

Gaussian quantum Markov semigroups (GQMSs) form a class of evolutions on bounded linear operators on the bosonic Fock space $\FS$ (in this work we will ony deal with finitely many modes) that play a central role in the study of quantum mechanical systems, since they provide convenient models for the descriptions of quantum optical experiments, atomic ensembles, quantum memories and many others; therefore, they have been intensively studied in the field of quantum controlled systems under the name of linear quantum systems (see \cite{DZ22} and references therein). From a mathematical point of view, they represent the noncommutative counterpart of classical Ornstein-Uhlenbeck semigroups (see \cite{CFL00,LMP20,FPSU24}).

The name Gaussian comes from the fact that GQMSs can be characterized as the class of semigroups that preserve the set of quantum Gaussian states (\cite{PO22}): given an initial quantum Gaussian state $\rho$ with mean $\mathbf{m} \in \RR^{2d}$ and covariance $\mathbf{\Sigma} \in M_{2d}(\RR)$, its evolution under the semigroup is a family of quantum Gaussian states $(\rho_t)_{t \geq 0}$ with parameters $m_t$ and $\mathbf{\Sigma}_t$ satisfying the following Cauchy problem:
\begin{align}\label{eq:par}
&\frac{d\mathbf{m}_t}{dt}=\mathbf{Z}^*\mathbf{m}_t-\mathbf{\zeta}, \quad \mathbf{m_0}=\mathbf{m},\\
&\frac{d\mathbf{\Sigma}_t}{dt}=\mathbf{Z}^*\mathbf{\Sigma}_t + \mathbf{\Sigma}_t \mathbf{Z} + \mathbf{C}, \quad \mathbf{\Sigma}_0=\mathbf{\Sigma}\nonumber\end{align}
for some $\mathbf{Z}, \mathbf{C} \in M_{2d}(\RR)$ and $\mathbf{\zeta} \in \RR^{2d}$ characterizing the semigroup.

Moreover, their generator can be formally written in a GKSL form with a quadratic Hamiltonian and linear jump operators, where quadratic and linear are to be understood in terms of creation and annihilation operators.

\bigskip The first step in the study of dynamical systems consists in determining whether they admit any equilibria and characterizing their structure. This was the initial motivation of this article: determine when a GQMS admits a normal invariant state and what is the structure of normal invariant states. One of the main contributions of this work is Theorem \ref{thm:main}, where we provide a complete characterization of those GQMSs admitting a normal invariant state in terms of $\mathbf{Z}$, $\mathbf{C}$ and $\mathbf{\zeta}$ or, equivalently, in terms of the Hamiltonian and jump operators. Our investigation points out that there are two building blocks: the quantum harmonic oscillator, i.e. the Hamiltonian evolution driven by a multiple of the number operator, and the case when $\mathbf{Z}$ is stable (let us call it a \textit{stable GQMS}), i.e. the spectrum of $\mathbf{Z}$ is contained in the half-plane of complex numbers with strictly negative real part. Every GQMS admitting a normal invariant state can be decomposed into the tensor product of several quantum harmonic oscillators with possibly different frequencies and a stable GQMS. Since GQMS are the open quantum systems counterpart of quadratic hamiltonian dynamics, a side result of our investigation is the characterization of quadratic Hamiltonians admitting a ground state (Corollary \ref{coro:quadhami}).

\bigskip Normal invariant states of the quantum harmonic oscillator are convex combinations of number states; in the case of the tensor product of several quantum harmonic oscillators, this continues to hold unless the frequencies are not $\mathbb{N}$ linear independent: in this case, the only difference is that one needs to include pure states supported on some particular linear combinations of number states and their closed convex hull. On the other hand, in the case of stable $\mathbf{Z}$, there exists a unique quantum Gaussian invariant state whose mean and covariance are given by the only fixed points of Eq. \eqref{eq:par}. Theorem \ref{theo:invst} completely characterizes the set of normal invariant states of GQMS showing that they are unitarily equivalent to the tensor product of the unique quantum Gaussian invariant state of the stable part and the normal invariant states of the quantum harmonic oscillators. 

\bigskip Interestingly, the family of GQMS admitting a normal invariant state turns out to be well behaved in terms of features of the dynamics for long times. First of all, the properties of the semigroup of being irreducible or possessing a faithful normal invariant state admit easy characterizations in terms of $\mathbf{Z}$, $\mathbf{C}$ and $\mathbf{\zeta}$ (Corollaries \ref{coro:faithful} and \ref{coro:irr}).

Moreover, the decoherence-free subalgebra $\NN$, which is the biggest $W^*$-algebra on which the semigroup acts as a group of $^*$-automorphisms (Proposition \ref{prop:auto}), can be characterized in terms of the eigenspaces of $\mathbf{Z}$ corresponding to purely imaginary eigenvalues and can be shown to be a factor of type $I$ (Theorem \ref{prop:perif} and Corollary \ref{coro:Ifactor}). The decoherence-free subalgebra is a central object in the study of long-time dynamics of quantum Markov semigroups and its connection to Theorem \ref{thm:main} is the following: it turns out that the whole algebra of bounded operators $B(\FS)$ factorizes as $B(\FS)=\NN \overline{\otimes} {\cal A}$, where ${\cal A}$ is another $W^*$-algebra which is invariant for the semigroup, moreover the action of the semigroup restricted to $\NN$ is unitarily equivalent to the tensor product of several quantum harmonic oscillators, while the dynamics restricted to ${\cal A}$ is unitarily equivalent to a stable GQMS. If we denote by $\TT:=(\TT_t)_{t \geq 0}$ the GQMS we are studying, we can prove (Theorem \ref{thm:eid}) that there exists a normal conditional expectation ${\cal E}:B(\FS) \rightarrow \NN \otimes \mathbb{1}$ such that for every $x \in B(\FS)$
\begin{equation} \label{eq:introdeco}
{\rm w}^*\text{-}\lim_{t \rightarrow +\infty} \TT_t(x)-\TT_t{\cal E}(x)=0.
\end{equation}
This shows that for very initial operator $x$, the dynamics asymptotically approaches the Hamiltonian evolution of ${\cal E}(x)$ driven by several harmonic oscillators. This implies that environment-induced decoherence in the sense of \cite{BO03} takes place and the dynamics asymptotically reduces to a closed dynamics on a subsystem of the original space. Making use of an equivalence between a generalized Poincar\'e inequality and the exponential decay of the semigroup restricted to a suitable subspace (\ref{app:PI}), we can show that the speed at which the convergence in Eq. \ref{eq:introdeco} takes place is completely governed by the spectral gap of the ergodic restriction to ${\cal A}$; we recall that the spectral gap of stable GQMS was explicitly found in \cite{FPSU24}.

\bigskip Finally, we are able to describe the convergence of ergodic means and determine the decomposition of the system space into its positive recurrent and transient parts (the null recurrent one turns out to be trivial).

\bigskip We recall that the study of normal invariant states for GQMSs and the convergence of any initial state to equilibrium has already been considered in the literature, even for more general semigroups, namely quasi-free semigroups (\cite{EL77,DVV79,He10,BW24}): see for instance \cite{DVV79,TN22, FP24,BW24}. However, only the stable case was considered so far.

Natural directions for future investigations are trying to conduct the same analysis we did in this work in the case of more general quasi-free semigroups (\cite{BW24}) and in the case of infinitely many modes.

\bigskip The structure of the paper is the following: in Section \ref{sec:np} we fix the notation and we recall some known results and definitions. Section \ref{sec:df} focuses on the study of the decoherence-free subalgebra. In Section \ref{sec:char} we prove the characterization of GQMSs with a normal invariant state. Section \ref{sec:stris} contains the description of the set of normal invariant states and the alternative characterizations of irreducibility and the property of the semigroup of possessing a faithful normal invariant state. In Section \ref{sec:EID} and \ref{sec:ds} the long time behavior of the semigroup is described and we show that the decoherence speed is the same as the one for the completely dissipative part. Section \ref{sec:et} deals with the convergence of ergodic means and the study of recurrence and transience. Finally, in Section \ref{sec:cOU}, we compare some of the results we obtained with their counterparts for classical Ornstein-Uhlenbeck semigroups.

\section{Notation and preliminaries} \label{sec:np}

In this section we will set the notation and recall the main definitions and results that are needed in order to read this work.

\textbf{Bosonic Fock space.} Let $\mathfrak{h}:=\FS$ be the symmetric or bosonic Fock space over $\CC^d$; we recall that this is the closed subspace of the free Fock space
$$
\bigoplus_{n\geq 0} (\CC^d)^{\otimes n}
$$
generated by exponential vectors, i.e. those vectors of the form
$$
e_z=\sum_{n \geq 0} \frac{z^{\otimes n}}{\sqrt{n!}}, \quad z \in \CC^d.
$$
In the physical literature normalized exponential vectors $e^{-\|z\|^2/2}e_z$ are usually called coherent vectors and $\ket{e_0}\bra{e_0}$ is known as the vacuum state. We recall that $\FS$ is isometrically isomorphic to $d$ copies of $\Gamma(\CC)$ via the following correspondence:
\begin{align*}
    \FS &\rightarrow \Gamma(\CC) \otimes \cdots \otimes \Gamma(\CC)\\
    e_z &\mapsto e_{z_1} \otimes \cdots \otimes e_{z_d}
\end{align*}
where $z_i$'s are the coordinates of $z \in \CC^d$ in any orthonormal basis. Every $\Gamma(\CC)$ correspond to a mode, is isomorphic to $\ell^2(\nn)$ and has its own creation, annihilation and number operators; let $\{e(n_1,\dots,n_d):=e(n_1) \otimes \cdots e(n_d)\}_{n_1,\dots, n_d \in \nn}$ be the canonical orthonormal basis for $\ell^2(\mathbb{N})^{\otimes d}\simeq \FS$, then the annihilation, creation and number operators corresponding to the $j$-th mode $a_j$, $a^\dagger_j$, $N_j$ act in the following way on the basis elements:
\begin{align*}
&a_je(n_1,\dots,n_d)=\sqrt{n_j}e(n_1,\dots,n_{j-1}, n_j-1, n_{j+1}, \dots , n_d),\\
&a^\dagger_je(n_1,\dots,n_d)=\sqrt{n_j+1}e(n_1,\dots,n_{j-1}, n_j+1, n_{j+1}, \dots,  n_d),\\
&N_je(n_1,\dots,n_d)=n_j e(n_1, \dots,  n_d).
\end{align*}
We recall that $N_j=a_j^\dagger a_j$ and that the linear space $D$ of finite linear combinations of the vectors of the canonical basis is an essential domain for all such operators.

However, we will rarely work with unbounded operators, dealing instead with Weyl operators: given $z \in \CC^d$, the corresponding Weyl operator is the unique unitary operator acting in the following way on coherent vectors:
\begin{align} \label{eq:Wexpo}
W(z):\FS &\longrightarrow \FS\\
e^{-\frac{\|w\|^2}{2}}e_w &\longmapsto e^{-\frac{\|w+z\|^2}{2}}e_{w+z}.
\end{align}
Weyl operators satisfy the exponential form of canonical commutation relations, i.e.
\begin{equation} \label{eq.expccr}
W(z+w)=e^{i\Im(\langle z,w \rangle)}W(z)W(w), \quad z,w \in \CC^d.
\end{equation}
Moreover, the set of Weyl operators is ${\rm w}^*$-dense in $B(\hh)$. To any $z \in \CC^d$, using Stone's theorem we can associate the unique self adjoint operator $R(z)$ which is the generator of the strongly continuous group $t \mapsto W(tz)$; in this sense, we will write $W(z)=e^{iR(z)}$. $R(z)$'s are called quadratures or field operators; let $\{f_1,\dots,f_d\}$ be the canonical basis of $\CC^d$, then one can check that
$$
R(f_j)=-p_j, \quad R(if_j)=q_j, \quad j=1,\dots, d,$$
where $q_i$ and $p_i$ are position and momentum observables, respectively, corresponding to the $j$-th mode. In general, one can see that
\begin{equation} \label{eq:fields}\sum_{j=1}^{d}\Re(z_j) q_j-\Im(z_j) p_j\subseteq R(z),
\end{equation}
where the sum is defined on a common domain for the operators involved.

\bigskip \textbf{Symplectic structure of $\CC^d$.} Eq. \eqref{eq:fields} hints that, when one deals with field operators, it might be convenient to consider the real Hilbert space structure of $\CC^d$ as well and the following identification with $\RR^{2d}$:
\begin{align*}
\CC^d &\longrightarrow \RR^{2d}\\
x+iy &\longmapsto \begin{pmatrix} x \\ y\end{pmatrix}.
\end{align*}
The real inner product on $\RR^{2d}$ corresponds to
$$\langle z,w \rangle_\RR:=\Re(\langle z,w \rangle), \quad z,w \in \CC^d.$$
We will use the bold font $\mathbf{z}$ to denote the vector in $\RR^{2d}$ corresponding to $z \in \CC^d$. Given a real linear vector subspace $V$ in $\CC^{d}$ we will denote by ${\cal V}$ its image via the identification above, i.e.
$$
{\cal V}:=\left \{ \mathbf{z} \in \RR^{2d} : \, z \in V\right \}.
$$
Let $A$ be a real linear operator on $\CC^d$, we will denote by $A^\sharp$ its adjoint with respect to $\langle \cdot, \cdot \rangle_{\RR}$; moreover, one can always write $A$ in the form
$$
Az=A_1z+A_2\overline{z},$$
where $A_1$, $A_2$ are complex linear operators on $\CC^d$. As an operator on $\RR^{2d}$, $A$ reads as
$$\mathbf{A}=\begin{pmatrix} \Re(A_1)+\Re(A_2) & \Im(A_2)-\Im(A_1) \\
\Im(A_1)+\Im(A_2) & \Re(A_1)-\Re(A_2)
\end{pmatrix}.$$
With an abuse of notation we will denote by $\mathbf{A}$ its complexification as well, acting on $\CC^{2d}$ and we will call spectrum of $\mathbf{A}$, denoted by ${\rm Sp}(\mathbf{A})$, the set of those $z \in \CC$ such that $z-\mathbf{A}$ is not invertible as an operator on $\CC^{2d}$.

The imaginary part of the complex inner product on $\CC^d$, which appears in the commutation relations between Weyl operators (Eq. \eqref{eq.expccr}), is a nondegenerate symplectic form on $\CC^d$, i.e. it is a bilinear form satisfying the following two requirements:
\begin{itemize}
\item $\Im(\langle z,w\rangle=-\Im(\langle w,z\rangle)$, $z,w \in \CC^d$;
\item $\Im(\langle z,w\rangle )=0$ for all $w \in \CC^d$ if and only if $z=0$.
\end{itemize}
We will use the notation $\sigma(z,w):=\Im(\langle z,w \rangle)$. One can immediately check that the symplectic form in $\RR^{2d}$ reads as
$$\sigma(z,w)=\langle \mathbf{z}, \mathbf{J} \mathbf{w}\rangle, \text{ where } \mathbf{J}=\begin{pmatrix} O_{d} & I_{d} \\
-I_d & O_d \end{pmatrix}.$$
$O_d$ (resp. $I_d$) is the $d\times d$-matrix with all entries equal to $0$ (resp. the $d$-dimensional identity matrix). $\mathbf{J}$ is called symplectic matrix. Two vectors $z,w \in \CC^d$ are said to be symplectically orthogonal if $\sigma(z,w)=0$; consequently, two real subspaces $V,W \subseteq \CC^d$ are said to be symplectically orthogonal if every $z \in V$ is symplectically orthogonal to every $w \in W$.

\bigskip \textbf{Symplectic transformations and metaplectic representation.} A symplectic transformation is a real linear transformation $A$ that preserve the symplectic structure, i.e.
$$\sigma( A z, A w )=\sigma ( z, w  ), \quad \forall z, w \in \CC^d.$$

When they are looked at as acting on $\RR^{2d}$, symplectic transformations are characterized by the following simple condition:
\begin{equation} \label{eq:symp1}
\mathbf{A}^*\mathbf{J}\mathbf{A}=\mathbf{J}.
\end{equation}
The set of symplectic transformations is a real Lie group, whose Lie algebra (those real linear operators $\mathbf{Z}$ such that $e^{t\mathbf{Z}}$ is a symplectic transformation for every $t \in \RR$) is composed by those matrices such that
\begin{equation} \label{eq:symp2}
\mathbf{Z}^* \mathbf{J}+\mathbf{J} \mathbf{Z}=\mathbf{0}.
\end{equation}
Eq. \eqref{eq:symp2} is obtained differentiating Eq. \eqref{eq:symp1}. Any symplectic transformation induces a $^*$-automorphism on $B(\hh)$ which acts in the following way on Weyl operators:
$$W(z)\mapsto W(Az), \quad \forall z \in \CC^d.
$$
Using the uniqueness of irreducible representations of the algebra generated by Weyl operators (it is the Stone-Von Neumann Theorem \cite[Theorem 5.3.1.]{Ho11}), one has that there exists a unitary transformation $U(A):\FS \mapsto \FS$ such that
$$W(Az)=U(A)W(z)U(A)^*, \quad \forall z \in \CC^d.$$
$U(A)$ is uniquely defined except for a complex phases; it is not possible to fix a phases such that $U(A)U(B)=U(AB)$ for every symplectic transformations $A$ and $B$, but one can fix the phases in a way that ensures the weaker requirement $U(A)U(B)=\pm U(AB)$. $U(A)$ is called the metaplectic representation of the symplectic group and we refer to Chapter 4, Section 2 in \cite{Fo89} for a detailed discussion.

Any splitting of $\CC^d$ into symplectically orthogonal subspaces
$$\CC^d=V_1 \oplus V_2$$
induces a factorization at the level of the Fock space. Indeed, one can show that there always exists a symplectic transformation $M$ such that
$$\langle Mz,M w \rangle =0, \quad \text{ for all } z\in V_1, \, w \in V_2.$$
Let us define $\tilde{V}_i=M(V_i)$ for $i=1,2$. One can easily check that the following mapping is unitary:
\begin{align*}
    \tilde{U}:\Gamma(\tilde{V}_1) \otimes \Gamma(\tilde{V}_2)&\rightarrow \FS\\
    e(z_1) \otimes e(z_2) &\mapsto U(M)^* e(z_1+z_2).
\end{align*}
$\tilde{U}$ provides us with a convenient representation for the $W^*$-algebras generated by $\{W(z) : \, z \in V_i\}$ for $i=1,2$, since
$$\tilde{U}^{-1} W(z_1) \tilde{U}=W(Mz_1) \otimes \mathbf{1}, \quad \tilde{U}^{-1} W(z_2) \tilde{U}=W(Mz_2) \otimes \mathbf{1}, \quad z_i \in V_i, \, i=1,2.$$

\bigskip \textbf{Gaussian states.} We will use the notation $L^1(\hh)$ for the space of trace class operators. It is well known that every normal state $\varphi$ on $B(\hh)$ is represented by a unique trace class operator $\rho \in L^1(\hh)$ in the following way:
$$
\varphi(x)=\tr(\rho x), \quad x \in B(\hh).$$
$\rho$ is called density operator associated to $\varphi$ and it is positive semidefinite and with trace $1$. With an abuse of notation we will often identify the state with its density operator. Any normal state on $B(\hh)$ is uniquely determined by the analogous of the characteristic function in this setting (\cite[Theorem 5.3.3]{Ho11}), which is given by
$$
\hat{\rho}(z)=\tr(W(z)\rho), \quad z \in \CC^d.$$ In analogy to the classical case, one calls a state $\rho$ a quantum Gaussian state if the characteristic function has the following form:

$$\hat{\rho}(z)=e^{-i\langle \zeta,z \rangle_\RR -\frac{1}{2}\langle z, S z \rangle_\RR}, \quad z \in \CC^d,$$
for some $\zeta \in \CC^d$ and for some positive real linear operator $S$ acting on $\CC^d$; $\zeta$ is said to be the mean of $\rho$ and $S$ is called covariance matrix. Notice that $\hat{\rho}(z)$ is the the characteristic function of the field operator $R(z)$, therefore $\rho$ is a Gaussian state if and only if all the field operators have a Gaussian law in the state $\rho$. An example of Gaussian state is the vacuum state, since one has (using Eq. \eqref{eq:Wexpo})
\begin{equation}
    \langle e_0, W(z) e_0 \rangle =e^{-\frac{\|z\|^2}{2}}.
\end{equation}

While there are no restrictions on the mean, a necessary and sufficient condition for $S$ in order to be the covariance matrix of a quantum Gaussian state is the following requirement:
$$
\mathbf{S}+i\mathbf{J} \geq 0$$
as complex linear operators on $\CC^{2d}$. This is equivalent to Heisenberg uncertainty principle for field operators.

\bigskip \bigskip \textbf{Gaussian quantum Markov semigroups.} A Quantum Markov semigroup $\TT:=\{\TT_t\}_{t \geq 0}$ is a ${\rm w}^*$-continuous semigroup of completely positive, identity preserving, ${\rm w}^*$-continuous maps on $B(\hh)$. The predual semigroup $\TT_* = \{\TT_{*t}\}_{t\geq0}$ acts on trace class operators and is a strongly continuous contraction semigroup. A quantum Markov semigroup is called Gaussian if it maps Gaussian states into Gaussian states; the class of Gaussian quantum Markov semigroups (GQMSs) can be completely characterized either through their explicit action on Weyl operators or through their generator. Let $L_\ell$, $H$ be the operators on $\hh$ defined on the domain $D$ by the following expressions:
\begin{align}
&H=\sum_{k,j=1}^{d} \left ( \Omega_{jk} a^\dagger_j a_k + \frac{\kappa_{jk}}{2}a_j^\dagger a_k^\dagger+\frac{\overline{\kappa}_{jk}}{2}a_j a_k\right )+\frac{1}{2}\sum_{j=1}^{d} \zeta_j a_j^\dagger + \overline{\zeta}_j a_j,\\ \label{eq:hami}
&L_\ell=\sum_{j=1}^{d} \overline{v}_{\ell j}a_j+u_{\ell j}a_j^\dagger, \quad l=1,\dots, m
\end{align}
where $\Omega \in M_d(\CC)$ is hermitian, $\kappa \in M_d(\CC)$ is symmetric, $\zeta \in \CC^d$, $m \leq 2d$ and $U,V \in M_{m\times d}(\CC)$.

For all $x \in B(\hh)$ consider the following quadratic form with domain $D\times D$
\begin{equation} \label{eq:fg} \begin{split}\mathfrak{L}(x)[\xi^\prime, \xi] &= i \langle H\xi^\prime, x\xi\rangle -i \langle \xi^\prime, xH\xi \rangle\\
&-\frac{1}{2}\sum_{\ell=1}^{m}(\langle \xi^\prime,xL^*_\ell L_\ell\xi \rangle - 2 \langle L_\ell\xi^\prime,x L_\ell\xi \rangle +\langle L^*\ell L_\ell\xi^\prime,x\xi \rangle). \end{split}\end{equation}

This is a natural way to make sense of a Gorini, Kossakowski, Lindblad-Sudarshan
(GKLS) representation of the generator in a generalized form since operators $L_\ell$, $H$ are unbounded.

The following result ensures that the form generator in Eq. \eqref{eq:fg} generates a quantum Markov semigroup and provides its action on Weyl operators; its proof can
be found in \cite{AFP22},
Theorem 2 in Appendix A and Theorem 2.4

\begin{theo}
There exists a unique quantum Markov semigroup, $\TT$ such that, for all $x \in B(\hh)$ and $\xi, \xi^\prime \in D$, the function $t \mapsto \langle \xi^\prime, \TT_t(x) \xi \rangle$ is differentiable and
$$
\frac{d}{dt} \langle \xi^\prime, \TT_t(x) \xi \rangle = \mathfrak{L}(\TT_t(x))[\xi^\prime,\xi], \quad  \forall t \geq 0.
$$
Moreover,
\begin{equation} \label{eq:aW}
\TT_t(W(z))=\exp\left (-\frac{1}{2}\int_0^t \langle e^{sZ}z,Ce^{sZ}z \rangle_\RR ds + i \int_0^t \langle \zeta,e^{sZ}z \rangle_\RR ds\right )W(e^{tZ}z),
\end{equation}
where
\begin{align}
&Zz=[(U^T\overline{U}-V^T\overline{V})/2+i \Omega]z+ [(U^TV-V^TU)/2+i \kappa]\overline{z},\label{eq:Z}\\
&Cz=(U^T\overline{U}+V^T\overline{V})z + (U^TV+V^TU)\overline{z} \label{eq:C}.
\end{align}
\end{theo}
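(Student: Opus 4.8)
\emph{Proof sketch.} The plan is to realize $\TT$ as the \emph{minimal} quantum Markov semigroup associated with the formal generator $\mathfrak{L}$, to prove that this minimal semigroup is conservative (identity preserving), and finally to identify its action on Weyl operators by solving a differential equation. Uniqueness will then follow from the $\mathrm{w}^*$-density of the Weyl operators together with the characterizing role of the minimal semigroup among all $\mathrm{w}^*$-continuous solutions of the form equation.

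First I would introduce the effective generator $G=-iH-\tfrac{1}{2}\sum_{\ell=1}^{m}L_\ell^* L_\ell$ on the domain $D$. Since $\Re\langle \xi, G\xi\rangle=-\tfrac{1}{2}\sum_{\ell=1}^{m}\|L_\ell\xi\|^2\le 0$ for $\xi\in D$, the operator $G$ is dissipative, and using that $H$ and the $L_\ell$ are at most quadratic in the $a_j,a_j^\dagger$ and that $D$ is a core, one checks that the closure of $G$ generates a strongly continuous contraction semigroup $(P_t)_{t\ge 0}$ on $\hh$. From $(P_t)_{t\ge 0}$ and the jump operators one builds the predual minimal semigroup by the standard iteration
\begin{align*}
\TT_{*t}^{(0)}(\rho) &= P_t\,\rho\, P_t^*,\\
\TT_{*t}^{(n+1)}(\rho) &= \TT_{*t}^{(0)}(\rho)+\sum_{\ell=1}^{m}\int_0^t \TT_{*(t-s)}^{(n)}\!\left(L_\ell P_s\,\rho\, P_s^* L_\ell^*\right)ds,
\end{align*}
whose increasing limit defines a completely positive, $\mathrm{w}^*$-continuous sub-Markov semigroup $\TT$ on $B(\hh)$ satisfying the required form differential equation and minimal among all such solutions; this is the content of the Chebotarev--Fagnola minimal-semigroup construction, and it automatically yields complete positivity.

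The main obstacle is \textbf{conservativity}, i.e. $\TT_t(\mathbb{1})=\mathbb{1}$, which is where the unboundedness of the coefficients bites: a priori the minimal semigroup only satisfies $\TT_t(\mathbb{1})\le\mathbb{1}$. I would establish it through a Lyapunov-function criterion with the total number operator $N=\sum_{j=1}^{d}N_j$, showing that $N+\mathbb{1}$ is an admissible Lyapunov operator by bounding $\mathfrak{L}(N)[\xi,\xi]\le b\,\langle\xi,(N+\mathbb{1})\xi\rangle$ on $D$ for some constant $b>0$. Because $H$ is quadratic and each $L_\ell$ is linear in $a_j,a_j^\dagger$, the commutators entering $\mathfrak{L}(N)$ are again at most quadratic, so this estimate reduces to a finite-dimensional computation on the symbols $\Omega,\kappa,U,V$; combined with the fact that $N$ has compact resolvent, this gives conservativity and hence that $\TT$ is a genuine QMS.

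Finally I would compute the action on Weyl operators. Since $W(z)$ acts as a displacement, the canonical commutation relations give $a_j W(w)=W(w)(a_j+w_j)$ and $a_j^\dagger W(w)=W(w)(a_j^\dagger+\overline{w}_j)$, whence $L_\ell W(w)=W(w)(L_\ell+\gamma_\ell(w))$ with scalars $\gamma_\ell(w)$ depending linearly on $w$ and $\overline{w}$, while $[H,W(w)]$ produces $W(w)$ times terms again linear and constant in $a_j,a_j^\dagger$. Thus $\mathfrak{L}$ maps the span of Weyl operators into itself in a controlled way, which suggests the ansatz $\TT_t(W(z))=c(t)\,W(e^{tZ}z)$ with $Z$ as in \eqref{eq:Z}. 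Substituting into the form differential equation, the operator part reproduces the flow $z\mapsto e^{tZ}z$ while the scalar part yields a first-order linear ODE for $c(t)$ whose solution is exactly the exponential prefactor in the claimed formula, with the quadratic contribution governed by $C$ in \eqref{eq:C} and the linear one by $\zeta$. As the Weyl operators are $\mathrm{w}^*$-dense in $B(\hh)$ and the minimal QMS is the unique $\mathrm{w}^*$-continuous solution of the form equation, this computation simultaneously proves \eqref{eq:aW} and the asserted uniqueness.
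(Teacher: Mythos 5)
The paper does not prove this theorem: it is quoted verbatim from the literature, with the proof deferred to \cite{AFP22} (Theorem 2 in Appendix A and Theorem 2.4 there). Your sketch follows essentially the same route as that cited proof: construct the minimal semigroup from the contraction semigroup generated by $G=-iH-\tfrac12\sum_\ell L_\ell^*L_\ell$, prove conservativity using the total number operator as a reference operator, and then verify the explicit formula on Weyl operators, from which uniqueness follows by minimality plus conservativity and ${\rm w}^*$-density. Two points are glossed over relative to what the actual argument requires: the Chebotarev--Fagnola conservativity criterion is not just the bound $\mathfrak{L}(N)[\xi,\xi]\le b\langle \xi,(N+\mathbb{1})\xi\rangle$ together with compactness of the resolvent of $N$ (which plays no role); one needs the estimate to hold uniformly for a regularized family $N_\epsilon=N(\mathbb{1}+\epsilon N)^{-1}$ together with domain compatibility conditions between $N$ and $G$. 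Likewise, the verification of Eq.~\eqref{eq:aW} must be carried out weakly on $D\times D$, since $t\mapsto W(e^{tZ}z)$ is not norm-differentiable; the ansatz-and-ODE computation you describe is correct in spirit but has to be phrased at the level of the quadratic form $\mathfrak{L}$. Neither issue is a wrong turn, but both are where the real technical work of the cited proof lives.
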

If $\rho$ is the quantum Gaussian state with mean $m$ and covariance $S$, Eq. \eqref{eq:aW} shows that
$\TT_{t*}(\rho)$ is the quantum Gaussian state with mean $m_t$ and covariance $S_t$ given by
$$m_t=e^{tZ^\sharp}m-\int_0^t e^{sZ^\sharp}\zeta ds, \quad S_t=e^{tZ^\sharp}Se^{tZ}+ \int_0^t e^{sZ^\sharp}Ce^{sZ}ds.$$
One can see that the complexifications of $Z$ and $C$ satisfy the following inequality:
\begin{equation} \label{eq:cz}
\mathbf{C} + i(\mathbf{Z}^* \mathbf{J}+\mathbf{J} \mathbf{Z}) \geq 0.
\end{equation}
Such a constraint ensures that the uncertainty principle for field operators is not violated and that $S_t$ is an admissible covariance for every $t \geq 0$.

For the majority of new results in this paper, we will make use of the following assumption:
\begin{equation} \tag{H1} \label{eq:hypo1}
  \text{\textbf{there exists a normal invariant state for $\TT$}.}
\end{equation}
We remark that we do not ask for the invariant state to be unique, nor faithful. \eqref{eq:hypo1} has the following consequence on the spectrum of $\mathbf{Z}$ (Proposition 8 in \cite{FP24}):
\begin{equation} \tag{H2} \label{eq:hypo2}
{\rm Sp}(\mathbf{Z}) \subseteq \{z \in \CC: \Re(z) \leq 0\}.
\end{equation}

We remark that \eqref{eq:hypo1} is strictly stronger than \eqref{eq:hypo2}: for instance if one consider the dynamic given by $W(t)^*\cdot W(t)$ on the one-mode Fock space is such that $Z=0$, but there exists no invariant state.

An important subclass of GQMSs are $^*$-automorphic/purely Hamiltonian dynamics: this happens if and only if one of the following equivalent conditions holds
\begin{itemize}
    \item there are no jump operators in the generator,
    \item $\mathbf{C}=0$.
\end{itemize}
In this case, Eq. \eqref{eq:cz} is equivalent to condition \eqref{eq:symp2}, or in other words $\mathbf{Z}$ is in the symplectic Lie algebra.

There exists two changes of basis that preserve the Gaussianity of quantum Markov semigroups. Given a symplectic transformation $M$, let $\TT^M$ be the quantum Markov semigroup defined as
\begin{equation}\label{eq:meta}
\TT_t^M:=U(M)\TT_t(U(M)^* \cdot U(M))U(M)^*, \quad \forall t \geq 0.\end{equation}
Notice that for every $z \in \CC^d$
$$\TT_t^M(W(z))=e^{-\frac{1}{2}\int_0^t \langle  e^{sZ^M} z, C^M e^{sZ^M } z \rangle_{\RR}ds + i \int_0^t \langle \zeta^M, e^{sZ^M} z \rangle_{\RR}ds}W( e^{tZ^M } z),$$
where
$$Z^M:=MZM^{-1}, \quad C^M:=M^{-\sharp} C M^{-1}, \quad \zeta^M:=M^{-\sharp} \zeta.
$$
It will be useful to introduce the following equivalence relation between GQMSs: $\TT \sim \TT^\prime$ if and only if there exists a symplectic transformation $M$ such that $\TT^\prime=\TT^M$. We will denote by $[\TT]$ the equivalence class of $\TT$. A trivial remark is that any representative $\TT^\prime \in [\TT]$ has a normal invariant state if and only if $\TT$ does: in fact, there is a bijection between the set of invariant states of $\TT$ and $\TT^\prime$. 

Given $w \in \CC^d$, we can define $\TT^{(w)}$ as the following quantum Markov semigroup:
\begin{equation}\label{eq:conjweyl}
\TT^{(w)}_t(\cdot)=W(w)\TT_t(W(w)^* \cdot W(w))W(w)^*, \quad t \geq 0.
\end{equation}
One can check that $\TT^{(w)}$ is again a GQMS with parameters
$$Z^{(w)}=Z, \quad C^{(w)}=C, \quad \zeta^{(m)}=\zeta-2Z^\sharp w.$$

\section{Decoherence-free subalgebra and ${\rm Sp}(\mathbf{Z})$} \label{sec:df}

Firstly, we will briefly recall the notion of decoherence-free subalgebra $\NN$ and its main properties for general quantum Markov semigroups. We will, then, report some known results (\cite{AFP22}) and derive some new ones in the case of GQMSs. Finally, we will draw the connection between $\NN$ and the spectrum of $\mathbf{Z}$ when $\TT$ admits a normal invariant state and prove that, in this case, it is a factor of type $I$.

\bigskip We recall that the decoherence-free subalgebra is defined as
$$\NN:=\{x \in B(\hh): \TT_t(x^*)\TT_t(x)=\TT_t(x^*x), \,\TT_t(x)\TT_t(x^*)=\TT_t(xx^*) \text{ for all }t \geq 0\}.
$$
$\NN$ is the biggest $W^*$-algebra which is invariant for $\TT$ and on which $\TT$ acts as a semigroup of *-endomorphisms (see the proof of Theorem 3.1 in \cite{Ev77}). In general models, there are two cases in which we know that $\TT$ acts as a semigroup of *-automorphisms:
\begin{itemize}
\item when $\TT$ is uniformly continuous (\cite[Theorem 3]{FSU19}) and
\item when $\TT$ admits a normal faithful invariant state (\cite[Lemma 3.4]{He11}).
\end{itemize}
In the case of Gaussian semigroups, we can show that $\TT$ always acts as a group of *-automorphisms on $\NN$. In order to do so, it is useful to recall the following facts about the decoherence-free subalgebra (see Theorem 6, Theorem 13 and Corollary 14 in \cite{AFP22}).

\begin{prop} \label{prop:prevNN}
Let $V$ be the biggest real linear subspace of $\ker(C)$ which is $Z$ invariant, then
\begin{equation} \label{eq:Vinv}
\NN=\{W(z): \, z \in V\}^{\prime\prime}.
\end{equation}
Therefore,
\begin{equation} \label{eq:nnW}
  \TT_t(W(z))=\exp\left ( i \int_0^t \langle \zeta,e^{sZ}z \rangle_\RR ds\right )W(e^{tZ}z), \quad z \in V.  
\end{equation}
Moreover, there exists a pair of natural numbers $d_c, d_f\leq d$ such that
\begin{equation}\label{eq:iso}
\NN\simeq L^\infty(\RR^{d_c};\CC) \overline{\otimes}B(\Gamma(\CC^{d_f}))\end{equation}
and $\TT_t(x)=e^{-iHt}xe^{iHt}$ for every $x \in \NN$, where $H$ is as in Eq. \eqref{eq:hami}.
\end{prop}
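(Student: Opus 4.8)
The plan is to establish the two inclusions in \eqref{eq:Vinv} separately and then read off \eqref{eq:nnW} and the structure \eqref{eq:iso} from the explicit action \eqref{eq:aW}. I would begin by isolating the Weyl operators that evolve without dissipation. By \eqref{eq:aW} one has $\TT_t(W(z)) = \lambda_t(z)\,W(e^{tZ}z)$ with $|\lambda_t(z)| = \exp\!\big(-\tfrac12\int_0^t \langle e^{sZ}z, C e^{sZ}z\rangle_\RR\,ds\big)$. Since $\TT_t$ is a contraction and $W(\cdot)$ is unitary, $|\lambda_t(z)|\le 1$ for all $t,z$; dividing $\int_0^t\langle e^{sZ}z,Ce^{sZ}z\rangle_\RR\,ds\ge 0$ by $t$ and letting $t\to 0^+$ shows $\mathbf C\ge 0$, so the integrand is nonnegative and continuous. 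Hence $|\lambda_t(z)|=1$ for all $t\ge 0$ if and only if $e^{sZ}z\in\ker(C)$ for every $s\ge 0$. The latter says precisely that $z$ lies in a $Z$-invariant subspace of $\ker(C)$, and taking the union of all such subspaces it is equivalent to $z\in V$. This identifies $V$ as the exact set of directions on which the dynamics is dissipation-free.

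For the inclusion $\{W(z):z\in V\}''\subseteq\NN$ I would verify the defining identities of $\NN$ directly on generators. For $z\in V$, $\TT_t(W(z))=\mu_t(z)W(e^{tZ}z)$ with $\mu_t(z)=\exp\!\big(i\int_0^t\langle\zeta,e^{sZ}z\rangle_\RR\,ds\big)$ a phase, and since $-z\in V$ with $\mu_t(-z)=\overline{\mu_t(z)}$, the commutation relation \eqref{eq.expccr} gives $W(-e^{tZ}z)W(e^{tZ}z)=\mathbb{1}$, whence $\TT_t(W(z)^*)\TT_t(W(z))=\mathbb{1}=\TT_t(W(z)^*W(z))$ and symmetrically $\TT_t(W(z))\TT_t(W(z)^*)=\TT_t(W(z)W(z)^*)$. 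Thus every $W(z)$ with $z\in V$ lies in $\NN$, and since $\NN$ is a $\mathrm{w}^*$-closed $*$-subalgebra it contains the generated von Neumann algebra. Equation \eqref{eq:nnW} is then immediate: for $z\in V$ one has $e^{sZ}z\in V\subseteq\ker(C)$, so the dissipative integral in \eqref{eq:aW} vanishes.

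The reverse inclusion $\NN\subseteq\{W(z):z\in V\}''$ is the crux, and I expect it to be the \emph{main obstacle}. The route is the characterization of the decoherence-free subalgebra as a relative commutant of the noise operators: any $x\in\NN$ makes the dissipation form $\sum_\ell[L_\ell,x]^*[L_\ell,x]$ vanish, which forces $[L_\ell,x]=0$, and the second defining identity applied forces $[L_\ell,x^*]=0$, i.e. $[L_\ell^*,x]=0$, for all $\ell$; invariance of $\NN$ under $\TT$ then propagates this to the iterated commutators $\delta_H^n(L_\ell)$ with the Hamiltonian. Testing these conditions against Weyl operators through the shift relation $W(w)^*L_\ell W(w)=L_\ell+c_\ell(w)\mathbb{1}$ turns ``commuting with the $L_\ell$'' into the linear condition describing $\ker(C)$ and ``commuting with the $H$-iterates'' into $Z$-invariance, so that the joint commutant is exactly $\{W(z):z\in V\}''$. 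The delicate part, carried out in \cite{AFP22}, is making these commutator manipulations rigorous for the unbounded operators $H,L_\ell$ and excluding bounded operators outside $\{W(z):z\in V\}''$.

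Finally, for \eqref{eq:iso} I would view $V$ as a real subspace of $\CC^d\simeq\RR^{2d}$ with the restricted symplectic form $\sigma$. Its radical $V_0=\{z\in V:\sigma(z,w)=0\ \forall w\in V\}$ is isotropic, hence $\dim V_0=:d_c\le d$, while a complementary symplectic subspace has even dimension $2d_f$ with $d_f\le d$, and the two are automatically symplectically orthogonal. Applying the symplectic-orthogonal factorization recalled in the preliminaries (the unitary $\tilde U$ built from a metaplectic $M$ adapting $V$ to standard form), $\{W(z):z\in V\}''$ splits as the tensor product of the abelian algebra generated by the mutually commuting Weyl operators over $V_0$, isomorphic to $L^\infty(\RR^{d_c};\CC)$, and the algebra generated by the Weyl operators over the nondegenerate part, which is a type $I$ factor isomorphic to $B(\Gamma(\CC^{d_f}))$ by the Stone--von Neumann theorem. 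The last assertion follows from the reverse-inclusion analysis: since every $x\in\NN$ commutes with all $L_\ell$ and $L_\ell^*$, the GKLS dissipator in \eqref{eq:fg} annihilates $\NN$ and only the Hamiltonian term survives, so $\TT_t$ restricts on $\NN$ to the closed evolution $\TT_t(x)=e^{-iHt}xe^{iHt}$ with $H$ as in \eqref{eq:hami}.
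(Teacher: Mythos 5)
The paper does not prove this proposition: it is recalled as known, with the statement attributed to Theorem 6, Theorem 13 and Corollary 14 of \cite{AFP22}. Your sketch is consistent with that source --- the identification of $V$ as the dissipation-free directions, the easy inclusion $\{W(z):z\in V\}''\subseteq\NN$ together with Eq. \eqref{eq:nnW}, and the symplectic radical/complement splitting behind \eqref{eq:iso} are all worked out correctly, and you rightly locate the genuine difficulty (the reverse inclusion via the commutant of the $L_\ell$, $L_\ell^*$ and their iterated commutators with $H$, with the attendant unbounded-operator issues) in \cite{AFP22}, which is exactly where the paper sends the reader.
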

The symbol $\simeq$ in Eq. \eqref{eq:iso} means that the two $W^*$-algebras are spatially isomorphic, i.e. that the second can be obtained from the first one via unitary conjugation and viceversa. Notice that the previous result states that $\NN$ is of type $I$; we will use $Z(\NN)$ to denote the center of $\NN$, which in the identification in Eq. \eqref{eq:iso} corresponds to $L^\infty(\RR^{d_c};\CC) \otimes\mathbb{1}$. We are now ready to present our first result.
\begin{prop} \label{prop:auto}
The following statements are true:
\begin{enumerate}
\item $\TT$ acts on $\NN$ as a group of *-automorphisms;
\item $Z(\NN)$ is an invariant $W^*$-algebra for $\TT$ and the action of $\TT$ restricted to $Z(\NN)$ is the one induced by the flow on $\mathbb{R}^{d_c}$ given by a deterministic differential equation of the form
    \begin{equation} \label{eq:cflow}
    \frac{dX_t}{dt}=A X_t +b, \quad A \in M_{d_c}(\RR), \,b \in \RR^{d_c}. 
    \end{equation}
\end{enumerate}  
\end{prop}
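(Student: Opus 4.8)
The plan is to read everything off Proposition \ref{prop:prevNN}, which already gives $\NN=\{W(z):z\in V\}''$ with $V$ the largest $Z$-invariant real subspace of $\ker C$, the action $\TT_t(W(z))=\exp\!\big(i\int_0^t\langle\zeta,e^{sZ}z\rangle_\RR\,ds\big)W(e^{tZ}z)$ for $z\in V$, and the fact that $\TT_t(x)=e^{-iHt}xe^{iHt}$ on $\NN$. For statement $1$ I would first argue that $\TT_t$ maps $\NN$ \emph{onto} itself. Since $V$ is $Z$-invariant and finite-dimensional, $e^{tZ}|_V\colon V\to V$ is injective, hence a linear bijection of $V$ with inverse $e^{-tZ}|_V$; consequently $\TT_t$ sends the generating set $\{W(z):z\in V\}$ onto itself up to scalar phases, and by $\mathrm{w}^*$-continuity and multiplicativity $\TT_t(\NN)=\{W(w):w\in V\}''=\NN$. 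As $\NN$ is by definition an algebra on which $\TT$ acts by $^*$-endomorphisms, surjectivity upgrades each $\TT_t|_\NN$ to a $^*$-automorphism. To obtain a group I would use $\TT_t|_\NN=\mathrm{Ad}(e^{-iHt})|_\NN$: since $\mathrm{Ad}(e^{-iHt})(\NN)=\NN$ we also get $\mathrm{Ad}(e^{iHt})(\NN)=\NN$, so setting $\TT_{-t}|_\NN:=\mathrm{Ad}(e^{iHt})|_\NN$ extends $(\TT_t|_\NN)_{t\ge0}$ to a one-parameter group of $^*$-automorphisms.

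For statement $2$ the invariance of $Z(\NN)$ is then immediate: a $^*$-automorphism preserves the center, so $\TT_t(Z(\NN))=Z(\NN)$ for every $t$. To identify the action I would use that, in the presentation $\NN=\{W(z):z\in V\}''$, the center is generated by the Weyl operators attached to the symplectic radical $V_0:=\{z\in V:\sigma(z,w)=0\ \forall w\in V\}$, because two Weyl operators commute under all rescalings precisely when their arguments are symplectically orthogonal; under the identification of Eq. \eqref{eq:iso} this is exactly the copy of $L^\infty(\RR^{d_c};\CC)$, with $d_c=\dim_\RR V_0$. Invariance of the center forces $W(e^{tZ}z)\in Z(\NN)$ for $z\in V_0$, hence $e^{tZ}(V_0)=V_0$, so $V_0$ is $Z$-invariant; write $A_0:=Z|_{V_0}$.

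Finally I would make the flow explicit. Fixing a basis $\{v_1,\dots,v_{d_c}\}$ of $V_0$, the commuting self-adjoint field operators $R(v_i)$ become, via Eq. \eqref{eq:iso}, the coordinate multiplications $x_1,\dots,x_{d_c}$ on $\RR^{d_c}$, so that $W(z)=e^{iR(z)}$ with $z=\sum_i c_iv_i$ corresponds to the character $x\mapsto e^{i\langle c,x\rangle}$. Since $R$ is real-linear in its argument, the relation $\TT_t(W(z))=e^{i\beta(t,z)}e^{iR(e^{tZ}z)}$, with $\beta(t,z)=\int_0^t\langle\zeta,e^{sZ}z\rangle_\RR\,ds$, translates into $f\mapsto f\circ\phi_t$ for the affine map $\phi_t(x)=e^{tA_0^{*}}x+\int_0^t e^{sA_0^{*}}b\,ds$: the linear part comes from expressing $e^{tZ}v_j$ in the basis and the translation part is read off from the phase $\beta$. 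This is exactly the flow of $\dot X_t=A X_t+b$ with $A=A_0^{*}\in M_{d_c}(\RR)$ and a suitable $b\in\RR^{d_c}$. The one genuinely computational step—and the main obstacle—is this last identification: one must set up the isomorphism of Eq. \eqref{eq:iso} explicitly, observe that the abelian dynamics inherited from the Weyl action is necessarily the pullback by a point transformation of $\RR^{d_c}$ (guaranteed abstractly by automorphy, since normal $^*$-automorphisms of $L^\infty(\RR^{d_c})$ are pullbacks by Borel isomorphisms), and then extract $A$ and $b$ correctly from $e^{tZ}|_{V_0}$ and the displacement integral, keeping careful track of transposes and adjoints when passing from operators to their classical symbols.
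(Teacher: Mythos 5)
Your argument is correct and follows essentially the same route as the paper: surjectivity of $\TT_t|_\NN$ from $e^{tZ}(V)=V$ applied to the generating Weyl operators, invariance of the center from automorphy, and the flow read off from the phase and the matrix of $Z|_{V_0}$ (your $A=A_0^{*}$ matches the paper's $A=(\langle Zf_i,f_j\rangle)$). The only differences are cosmetic: you identify $Z(\NN)$ directly via the symplectic radical of $V$ (which the paper records in a remark) where the paper instead passes to a representative of $[\TT]$ via the proof of Theorem 13 in \cite{AFP22}, and you add the explicit extension to a one-parameter group via ${\rm Ad}(e^{iHt})$.
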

The relation between $A$, $b$ and $Z$, $\zeta$ can be understood reading the proof of Proposition \ref{prop:auto}.
\begin{proof}
1. From the fact that for all $x \in \NN$, $\TT_t(x)=e^{-itH}xe^{itH}$ (Proposition \ref{prop:prevNN}), one can already see that it is a semigroup of injective *-endomorphisms, therefore we only need to prove surjectivity.

Using the explicit action of $\TT$ on Weyl operators contained in $\NN$ (Eq. \eqref{eq:nnW}) and the representation of $\NN$ given in Eq. \eqref{eq:Vinv}, one has that
\[\begin{split}\TT_t(\NN)&=\{W(e^{tZ}z):z \in V\}^{\prime\prime}=\{W(z):z \in e^{tZ}(V)\}^{\prime\prime}\\
&=\{W(z):z \in V\}^{\prime\prime}=\NN.\end{split}\]
The last inequality is due to the fact that $V$ is $Z$-invariant and that $\ker(e^{tZ})=\{0\}$. This implies that $\TT_t$ is surjective, hence it is a *-automorphism.

2. In case $\TT_t$ acts as a *-automorphism on $\NN$, it is easy to see that for every $W^*$-subalgebra ${\cal M} \subseteq \NN$, one has
\begin{equation}\label{eq:comm}
\TT_t({\cal M}^\prime\cap \NN) =\TT_t({\cal M})^{\prime}\cap \NN,
\end{equation}
Therefore, it follows easily that $\TT_t(Z(\NN))=Z(\NN)$:
indeed,
\[\begin{split}
\TT_t(Z(\NN))&=\TT_t( \NN^{\prime}\cap \NN)=\TT_t(\NN)^\prime \cap \NN\\
&=\NN^\prime \cap \NN=Z(\NN).
\end{split}\]
The proof of Theorem 13 in \cite{AFP22} shows that, at the cost of passing to another representative $\TT^\prime \in [\TT]$, we can assume that Weyl operators belonging to $Z(\NN)$ correspond to $z \in {\rm span}_{\RR}\{f_1,\dots,f_{d_c}\}$ (where $\{f_1,\dots f_{d}\}$ is the canonical basis of $\CC^d$).
In the representation $Z(\NN) \simeq L^\infty(\RR^{d_c}) \otimes \mathbb{1}$, Weyl operators in $Z(\NN)$ correspond to functions of the form
\[
f_{\tilde{z}}(x)=\exp\left (i\langle x, \tilde{z} \rangle \right ), \quad \tilde{z}=(z_1,\dots, z_{d_c}) \in \CC^{d_c}.
\]
Let us introduce
\[A=(\langle Z f_i, f_j \rangle), \quad b=(\Re( \zeta_j)), \quad i,j=1,\dots, d_c.
\]
Using Eq. \eqref{eq:nnW}, one can easily see that their evolution under $\TT$ is given by
\[
\begin{split}
    f_{\tilde{z}}(x,t)&=\exp \left ( i \left \langle e^{tA}x+\int_0^t e^{sA} bds,\tilde{z} \right \rangle \right )=f_{\tilde{z}}\left ( e^{tA}x+\int_0^t e^{sA} bds\right ).
\end{split}
\]
Eq. \eqref{eq:cflow} follows easily.
\end{proof}

\begin{rem}
There is an alternative proof in order to show that $Z(\NN)$, which can be equivalently characterized as
$$Z(\NN)=\{W(z):\, z \in V, \sigma( z, w  )=0, \, \forall w \in V\}^{\prime\prime},$$
is a $\TT$-invariant $W^*$-algebra. For every $t \geq 0$, there exists a symplectic transformation $M_t$ such that $M_t(V) = V$ and $e^{tZ}_{|V}=M_{t|V}$; therefore, considering $z \in V$ such that $W(z) \in {\cal Z}(\NN)$, one has
$$
\sigma( e^{tZ}z,w  )=\sigma( M_t z, w )= \sigma( z, M^{-1}_t w )=0, \quad \forall w \in V.$$
\end{rem}

An important splitting of the complex plane in order to study the asymptotics of the semigroup generated by $\mathbf{Z}$ is the following one:
\begin{equation} \label{eq:ccsplit}
\mathbb{\CC}=\{\Re(z) <0\}\cup \{\Re(z)=0\} \cup \{\Re(z)>0\}.
\end{equation}
If ${\rm Sp}(\mathbf{Z}) \subseteq \{\Re(z) <0\}$, the semigroup is stable and Theorem 9 in \cite{FP24} shows that $\TT$ admits a unique normal invariant state $\rho_\infty$ which is Gaussian and such that for every initial state $\rho$, one has
$$\lim_{t \rightarrow +\infty} \TT_{*t}(\rho)=\rho_\infty$$
in trace norm; in this case, one has that $\NN=\CC\mathbf{1}$. It is natural to wonder how the spectrum of $\mathbf{Z}$ restricted to ${\cal V}$ (which is the real subspace of $\mathbb{R}^{2d}$ corresponding to $V$ appearing in Proposition \ref{prop:prevNN}) locates in the complex plane with respect to the splitting in Eq. \eqref{eq:ccsplit}; in general it can be in any of the three subsets. However, the existence of a normal invariant state for the semigroup forces it to be only on the imaginary axis, as we will show in the following.

First of all, we prove a simple lemma about purely imaginary eigenvalues of $\mathbf{Z}$.

\begin{lemma} \label{lem:nonex}
If \eqref{eq:hypo1} holds true, the geometric and algebraic multiplicity of the purely imaginary eigenvalues of $\mathbf{Z}$ as an operator on $\CC^{2d}$ coincide.
\end{lemma}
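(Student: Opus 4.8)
The statement is equivalent to saying that every purely imaginary eigenvalue of $\mathbf{Z}$ is semisimple, i.e. that $\mathbf{Z}$ carries no nontrivial Jordan block attached to an eigenvalue on the imaginary axis. The plan is to argue by contradiction: assuming such a Jordan block exists, I would first produce a nonzero real vector $\mathbf{z}_0 \in \RR^{2d}$ whose forward orbit under $e^{t\mathbf{Z}}$ escapes to infinity, and then show that this is incompatible with \eqref{eq:hypo1}. Concretely, if $i\omega$ is a purely imaginary eigenvalue carrying a Jordan chain $(\mathbf{Z}-i\omega)v=u$, $(\mathbf{Z}-i\omega)u=0$ with $u\neq 0$, then $e^{t\mathbf{Z}}v=e^{i\omega t}(v+tu)$. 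Taking $\mathbf{z}_0:=\Re v$ and using that the real and imaginary parts of the eigenvector $u$ are linearly independent (so that $\|\Re(e^{i\omega t}u)\|$ stays bounded below by a positive constant), one obtains $\|e^{t\mathbf{Z}}\mathbf{z}_0\|\to+\infty$. The case $\omega=0$ is even simpler, since then $u$ may be chosen real and $e^{t\mathbf{Z}}\mathbf{z}_0=\mathbf{z}_0+tu$.

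Next I would exploit invariance. Let $\rho$ be the normal invariant state and $\hat\rho(\mathbf{z})=\tr(W(z)\rho)$ its characteristic function. Invariance $\TT_{*t}(\rho)=\rho$, together with the duality $\tr(\rho\,\TT_t(W(z)))=\tr(\TT_{*t}(\rho)\,W(z))=\tr(\rho\,W(z))=\hat\rho(\mathbf{z})$ and the explicit action \eqref{eq:aW}, yields after taking moduli
\[
|\hat\rho(\mathbf{z})| = \exp\Big(-\tfrac12\int_0^t \langle e^{s\mathbf{Z}}\mathbf{z},\,\mathbf{C}\,e^{s\mathbf{Z}}\mathbf{z}\rangle\,ds\Big)\,|\hat\rho(e^{t\mathbf{Z}}\mathbf{z})|.
\]
The decisive point is that $\langle\mathbf{w},\mathbf{C}\mathbf{w}\rangle\geq 0$ for every real $\mathbf{w}$: the real matrix $\mathbf{Z}^*\mathbf{J}+\mathbf{J}\mathbf{Z}$ is antisymmetric and hence contributes nothing to the real quadratic form in \eqref{eq:cz}, leaving $\langle\mathbf{w},\mathbf{C}\mathbf{w}\rangle\geq 0$. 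Therefore the exponential prefactor is $\leq 1$, so $|\hat\rho(\mathbf{z})|\leq|\hat\rho(e^{t\mathbf{Z}}\mathbf{z})|$ for all $t\geq 0$ and all $\mathbf{z}$; in words, $|\hat\rho|$ is nondecreasing along forward orbits of $e^{t\mathbf{Z}}$.

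The step I expect to be the main obstacle is a quantum Riemann--Lebesgue property: for any normal state $\rho$ one has $\hat\rho(\mathbf{z})\to 0$ as $\|\mathbf{z}\|\to\infty$. I would prove it by writing $\rho=\sum_n\lambda_n\ket{\psi_n}\bra{\psi_n}$ and checking that each $\langle\psi_n,W(z)\psi_n\rangle\to 0$ (reducing to finite-particle vectors, where the explicit Laguerre expression lets the Gaussian factor dominate the polynomial), then passing to the limit inside the sum by dominated convergence since $|\langle\psi_n,W(z)\psi_n\rangle|\leq 1$ and $\sum_n\lambda_n=1$. Granting this, apply the orbit inequality to $\mathbf{z}_0$: as $\|e^{t\mathbf{Z}}\mathbf{z}_0\|\to\infty$ we get $|\hat\rho(e^{t\mathbf{Z}}\mathbf{z}_0)|\to 0$, hence $\hat\rho(\mathbf{z}_0)=0$. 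The same argument applies verbatim to every nonzero multiple $\lambda\mathbf{z}_0$, giving $\hat\rho(\lambda\mathbf{z}_0)=0$ for all $\lambda\neq 0$; letting $\lambda\to 0$ and using continuity of $\hat\rho$ forces $\hat\rho(0)=0$, which contradicts $\hat\rho(0)=\tr(\rho)=1$. This contradiction rules out nontrivial Jordan blocks on the imaginary axis and proves the lemma.
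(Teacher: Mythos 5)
Your proposal is correct and follows essentially the same route as the paper: a Jordan block on the imaginary axis produces a real vector whose orbit under $e^{t\mathbf{Z}}$ diverges, invariance plus $\langle \mathbf{w},\mathbf{C}\mathbf{w}\rangle\geq 0$ makes $|\hat\rho|$ nondecreasing along orbits, and the quantum Riemann--Lebesgue lemma (which the paper simply cites from \cite{FP24} rather than reproving) forces $\hat\rho$ to vanish on a punctured line, contradicting continuity at $0$ -- the paper phrases this last step as ``$\delta_0(u)$ is not the characteristic function of any probability measure,'' which is the same point.
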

\begin{proof}
We will denote by $\omega_\infty$ any normal invariant state. Let us consider $\lambda \in {\rm Sp}(\mathbf{Z})\cap \{\Re(z)=0\}$.

If $\lambda=0$, then there exists a corresponding eigenvector $x=(x_1,x_2) \in \RR^{2d}$. By contradiction, suppose that there exists $y=(y_1,y_2) \in \RR^{2d}$ such that $\mathbf{Z}y=x$; we can choose $x$ and $y$ with real entries because $\mathbf{Z}$ has real entries as well. Notice that $e^{t\mathbf{Z}}y=y+tx$, which implies that for every $u \in \RR\setminus\{0\}$
\[
\begin{split}
|\omega_\infty(W(u(y_1+iy_2)))|&=|\omega_\infty(\TT_t(W(u(y_1+iy_2)))|\\
&\leq \omega_\infty(W(u(y_1+iy_2+t(x_1+ix_2)))) \xrightarrow[t \rightarrow +\infty]{}0.
\end{split}\]
The limit is due to quantum Riemann-Lebesgue Lemma (\cite[Lemma 7]{FP24}). This shows that $\hat{\omega}_\infty(u(y_1+iy_2))=\delta_0(u)$, which is not the characteristic function of any probability measure on the real line.

If $\lambda=\alpha i$ with $\alpha \neq 0$ and the algebraic and geometric multiplicity of $\lambda$ do not coincide, one can find $w$, $y \in \CC^{2d}$ such that
$$
\mathbf{Z}w=\alpha i w, \quad \mathbf{Z}y=\alpha i y + w.$$
Since $\mathbf{Z}$ has real entries, one has
$$\mathbf{Z}\overline{w}=-\alpha i \overline{w}, \quad \mathbf{Z}\overline{y}=-\alpha i \overline{y} + \overline{w}$$
and, consequently, one has that 
$$
    e^{t\mathbf{Z}}y=e^{i \alpha t}y+te^{i \alpha t }w, \quad e^{t\mathbf{Z}}\overline{y}=e^{-i \alpha t}\overline{y}+te^{-i \alpha t }\overline{w}.
$$
The matrix representation of $e^{t\mathbf{Z}}$ with respect to $\Re(y)$, $\Im(y)$, $\Re(w)$ and $\Im(w)$ is given by
$$
\begin{pmatrix}
    \cos(\alpha t) & \sin(\alpha t)  & t\cos(\alpha t) & t \sin(\alpha t)\\
    - \sin(\alpha t) & \cos(\alpha t) &-t\sin(\alpha t)&  t\cos(\alpha t) \\
    0 &0 &\cos(\alpha t) & \sin(\alpha t)\\
    0 & 0 & -\sin(\alpha t) & \cos(\alpha t)
\end{pmatrix}.
$$
In this case as well, one can see, for instance, that $\max\{\|e^{t\mathbf{Z}}\Re(y)\|, \,\|e^{t\mathbf{Z}}\Im(y)\|\} \rightarrow +\infty$ for $t \rightarrow +\infty$ and one arrives to a contradiction as in the case $\lambda =0$.
\end{proof}

Since $\mathbf{Z}$ has real entries, given an eigenvalue $\lambda$, the complex conjugate $\overline{\lambda}$ is an eigenvalue as well; the corresponding eigenvectors can be chosen as one the conjugate (entrywise) of the other. Therefore, we can always pick a basis of vectors with real entries for the direct sum of the eigenspaces corresponding to $\lambda$ and its conjugate. Given the eigenspace in $\CC^{2d}$ corresponding to the eigenvalues $\lambda$ (which we take as a representative and we assume to have non-negative imaginary part) and $\overline{\lambda}$, we denote by $z_{\lambda,1},\dots, z_{\lambda,k_{\lambda}}$ a possible choice for such a basis. Let us introduce the following notation:
$$
{\cal V}^{\lambda}:={\rm span}_{\mathbb{R}}\{z_{\lambda,1}, \dots, z_{\lambda,k_\lambda}\} \subseteq \RR^{2d},$$
$$
{\cal V}_{0}:=\bigoplus_{\lambda \in {\rm Sp}(\mathbf{Z}) \cap i \mathbb{R}_{\geq 0}} {\cal V}^{\lambda} \subseteq \RR^{2d}.$$
Thanks to Lemma \ref{lem:nonex}, one has that the action of $e^{t\mathbf{Z}}$ on ${\cal V}_{0}$ can be decomposed in $2\times 2$ blocks which are either $\mathbf{1}$ or similar to planar rotations. As usual, we will remove the calligraphic font for the corresponding sets in $\CC^d$, for instance
$$
V_{0}:=\left \{z \in \CC^d: \, \begin{pmatrix} \Re(z)\\ \Im(z) \end{pmatrix} \in {\cal V}_{0}\right \}. $$
\begin{theo} \label{prop:perif}
   If \eqref{eq:hypo1} holds, then $V_{0} \subseteq \ker(C)$ and
    $$\NN=\{W(z):z \in V_{0}\}^{\prime\prime}.
    $$
\end{theo}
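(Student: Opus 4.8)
The plan is to prove the two assertions separately, exploiting throughout the fundamental relation obtained by combining invariance of a normal state $\omega_\infty$ with the explicit action \eqref{eq:aW}. Writing $\hat{\omega}_\infty(z)=\omega_\infty(W(z))$ and using $\omega_\infty\circ\TT_t=\omega_\infty$, for every $z\in\CC^d$ and $t\ge 0$ one gets
\[
\hat{\omega}_\infty(z)=\exp\left(-\tfrac12\int_0^t\langle e^{sZ}z,Ce^{sZ}z\rangle_\RR\,ds+i\int_0^t\langle\zeta,e^{sZ}z\rangle_\RR\,ds\right)\hat{\omega}_\infty(e^{tZ}z).
\]
Since $\NN=\{W(z):z\in V\}''$ by Proposition \ref{prop:prevNN}, where $V$ is the largest $Z$-invariant subspace of $\ker(C)$, and since $V_{0}$ is manifestly $Z$-invariant, the second claim reduces to proving $V=V_{0}$: I will obtain $V_{0}\subseteq V$ from the first claim $V_{0}\subseteq\ker(C)$, and the reverse inclusion $V\subseteq V_{0}$ from a separate spectral argument.

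To show $V_{0}\subseteq\ker(C)$, fix $z\in V_{0}$. By Lemma \ref{lem:nonex} the group $e^{s\mathbf{Z}}$ acts on ${\cal V}_{0}$ through blocks that are identities or planar rotations, so the orbit $s\mapsto e^{s\mathbf{Z}}\mathbf{z}$ is bounded and the nonnegative function $g(s):=\langle e^{sZ}z,Ce^{sZ}z\rangle_\RR$ (recall $\mathbf{C}\ge 0$) is quasi-periodic. Replacing $z$ by $uz$ with $u\in\RR$, taking moduli in the relation above and bounding $|\hat{\omega}_\infty(e^{tZ}uz)|\le 1$ gives $|\hat{\omega}_\infty(uz)|\le\exp(-\tfrac{u^2}{2}\int_0^t g\,ds)$. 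If $g\not\equiv 0$, a nonnegative nonzero quasi-periodic function has strictly positive mean, hence $\int_0^t g\,ds\to+\infty$ and $\hat{\omega}_\infty(uz)=0$ for every $u\ne 0$; this contradicts the continuity of $\hat{\omega}_\infty$ at the origin, where $\hat{\omega}_\infty(0)=1$. Therefore $g\equiv 0$, so $\langle z,Cz\rangle_\RR=g(0)=0$ and $Cz=0$ because $\mathbf{C}\ge 0$. Thus $V_{0}\subseteq\ker(C)$, and $Z$-invariance of $V_{0}$ yields $V_{0}\subseteq V$.

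For the reverse inclusion I would decompose, using \eqref{eq:hypo2}, the $Z$-invariant space ${\cal V}$ into the direct sum of the center subspace of $\mathbf{Z}_{|{\cal V}}$ (eigenvalues on $i\RR$) and its stable subspace (eigenvalues with strictly negative real part). The center subspace is spanned by purely imaginary eigenvectors, which are semisimple by Lemma \ref{lem:nonex} and hence already lie in ${\cal V}_{0}$; it remains to rule out the stable subspace. For $z\in V$ the $C$-term vanishes by \eqref{eq:nnW}, so the functional relation reduces to $|\hat{\omega}_\infty(z)|=|\hat{\omega}_\infty(e^{tZ}z)|$ for all $t$. If $\mathbf{z}$ belonged to the stable subspace then $e^{t\mathbf{Z}}\mathbf{z}\to 0$, whence $|\hat{\omega}_\infty(uz)|=1$ for every $u\in\RR$ by continuity. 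Since $\hat{\omega}_\infty(uz)=\tr(\omega_\infty\,e^{iuR(z)})$ is the characteristic function of the field operator $R(z)$ in the state $\omega_\infty$, modulus identically equal to one forces $R(z)$ to be almost surely constant under $\omega_\infty$; this is impossible for $z\ne 0$ because $R(z)$ has purely continuous spectrum and no eigenvector in $\hh$. Hence the stable subspace is trivial and $V\subseteq V_{0}$.

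Combining the two inclusions gives $V=V_{0}$, and therefore $\NN=\{W(z):z\in V\}''=\{W(z):z\in V_{0}\}''$, together with $V_{0}\subseteq\ker(C)$. I expect the main obstacle to be the reverse inclusion, and specifically the passage from $|\hat{\omega}_\infty(uz)|\equiv 1$ to a contradiction: one must argue carefully that a normal state cannot concentrate the law of a field operator at a point, invoking the absolute continuity of the spectrum of $R(z)$. A secondary technical point is the rigorous treatment of the quasi-periodic average, ensuring that $\int_0^t g\,ds$ genuinely diverges whenever $g\not\equiv 0$.
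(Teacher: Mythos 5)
Your proof is correct and follows essentially the same route as the paper's: both reduce the statement to showing $V=V_0$, prove $V_{0}\subseteq\ker(C)$ by showing that a nonvanishing quadratic term would force $\hat{\omega}_\infty(uz)=\delta_0(u)$, which cannot be the characteristic function of the law of $R(z)$, and exclude a stable part of $V$ by showing that the law of $R(z)$ in $\omega_\infty$ would degenerate to a point mass. The only cosmetic differences are that the paper handles the $\lambda=0$ and $\lambda=i\theta\neq 0$ blocks separately rather than invoking the strictly positive mean of a nonnegative quasi-periodic function, and obtains the final contradiction from the Heisenberg uncertainty principle rather than from the absence of point spectrum of $R(z)$; both of these variants are valid.
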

\begin{proof}
First of all, we will show that $V_{0} \subseteq \ker(C)$. Assume that ${\rm supp}(C)$ is not orthogonal (with respect to $\langle\cdot , \cdot \rangle_\RR$) to $V_{0}$; then, there must be at least one $\lambda=i\theta$ such that $V^{\lambda}$ is not orthogonal to ${\rm supp}(C)$.

Let us first assume that $\lambda =0$; then, there exists $w \in V_0$ such that $Zw=0$ and $w$ is not orthogonal to ${\rm supp}(C)$. Therefore, for every $u \in \mathbb{R}$ one has
$$
\TT_t(W(uw))=\exp\left (-\frac{tu^2}{2}\langle w,Cw \rangle_\RR + iu t \langle \zeta,w \rangle_\RR\right )W(uw).$$
From the expression above, one can see that
\begin{equation} \label{eq:lim}
\lim_{t \rightarrow +\infty}\TT_t(W(uw)) \xrightarrow{\| \cdot \|_\infty} \delta_{0}(u) \mathbf{1},\end{equation}
which contradicts the fact that $\TT$ admits a normal invariant state: indeed, denoting by $\omega_\infty$ a normal invariant state, Eq. \eqref{eq:lim} implies that $\hat{\omega}_\infty(u w)=\delta_0(u)$, which is not the characteristic function of any probability measure on the real line.

Let us now assume that $\lambda=i\theta \neq 0$; then there exist two linearly independent $w,z \in V_\lambda$ such that $e^{tZ}$ on ${\rm span}_{\mathbb{R}}\{w,z\}$ acts as
$$
\begin{pmatrix} \cos(t\theta) & \sin(t\theta) \\
-\sin(t\theta) & \cos(t\theta)\end{pmatrix}$$
and there exists a vector $a \in {\rm span}_{\mathbb{R}}\{w,z\}$ which is not orthogonal to ${\rm supp}(C)$. Therefore one has that in this case as well
$$
\lim_{t \rightarrow +\infty}\int_0^t \Re( \langle e^{sZ}a, C e^{sZ} a \rangle)ds=  +\infty$$
and for every $u \in \mathbb{R}$
$$\lim_{t \rightarrow +\infty}\TT_t(W(ua)) \xrightarrow{\| \cdot \|_\infty} \delta_{0}(u) \mathbf{1}.$$

\bigskip Let us now prove that the only invariant subspaces for $Z$ in $\ker(C)$ correspond to purely imaginary eigenvalues. Since $\TT$ admits a normal invariant state, we know that ${\rm Sp}(\mathbf{Z}) \subset \{ \Re(z) \leq 0\}$, so it is enough to show that there cannot be any non-trivial $Z$-invariant subspace $W$ such that
$$W \subseteq \ker(C),\quad  {\rm Sp}(\mathbf{Z}_{|{\cal W}}) \subset \{z \in \CC : \, \Re(z)<0\}. $$

Let us consider $w \in W$. Notice that
$$\lim_{t \rightarrow +\infty}e^{tZ}w = 0$$
and
$$
\int_0^{+\infty} e^{tZ}w dt=-Z^{-1}w$$
is well defined. Therefore, if we denote by $\omega_\infty$ any normal invariant state for $\TT$, one has that the following statement holds for every $u \in \RR$:
\[
\begin{split}\omega_\infty(W(uw))&=\omega_\infty(\TT_t(W(uw)))\\
&=\exp \left (iu \int_0^t \langle \zeta,e^{sZ} w \rangle _\RR ds \right )\omega_\infty(W(e^{tZ}w)) \xrightarrow[t \rightarrow +\infty]{} \exp \left (-iu \langle \zeta,Z^{-1}w \rangle_\RR \right ).
\end{split}\]
If $w \neq 0$, this means that the characteristic function of the field operator $R(w)$ in the state $\omega_{\infty}$ is the one of $\delta_{\langle \zeta, Z^{-1}w \rangle_\RR}.$ However, this contradicts the Heisenberg uncertainty principle, therefore $w=0$ and we are done. 
\end{proof}

\begin{rem}
Summing up, Lemma \ref{lem:nonex} and Theorem \ref{prop:perif} show that $Z$ seen as an operator acting on $V_0$ generates a group which is similar to a group of special orthogonal transformations, i.e. there exists a linear transformation $A$ such that $A(V_0)=V_0$ and for all $t \in \RR$
$$e^{tZ^{A\sharp} }_{|V_0}e^{tZ^A}_{|V_0}=\mathbf{1}, \quad  \det(e^{tZ^A}_{|V_0})\equiv 1$$
or, equivalently,
$$Z^{A\sharp}_{|V_0}+Z^A_{|V_0}=0, \quad \tr(Z^A_{|V_0})=0,$$
where $Z^A=AZ A^{-1}$.
\end{rem}
If \eqref{eq:hypo1} holds true, there is an important decomposition of $\CC^d$ induced by the spectral structure of $\mathbf{Z}$.

\begin{lemma} \label{lem:so}
Let us assume \eqref{eq:hypo1} and let us define
$$V_-:=\left \{z \in \CC^d:\lim_{t \rightarrow +\infty}e^{tZ}z=0 \right \}.$$

Then, the following statements are true:
    \begin{enumerate}
    \item $V_0$ and $V_-$ are $Z$-invariant subspaces,
    \item $V_0 \cap V_-=\{0\}$,
    \item $V_0 \oplus V_-=\CC^d,$
    \item $V_0$ and $V_-$ are symplectically orthogonal or, equivalently, $[W(z),W(w)]=0$ for every $z \in V_0$, $w \in V_-$,
    \item The symplectic form restricted to $V_0$ and $V_-$ is nondegenerate.
    \end{enumerate}
    Moreover, for every $z \in \CC^d$, let $z=z_1+z_2$ be the unique decomposition where $z_1 \in V_{0}$ and $z_2 \in V_-$, then
\begin{equation}
{\rm w}^*\text{-}\lim_{t \rightarrow +\infty}\TT_t(W(z))-A(z_2,\infty)\TT_t(W(z_1)=0,
\end{equation}
where
\begin{equation}
A(z_2,\infty)=\exp\left (-\frac{1}{2} \int_0^{+\infty} \langle e^{sZ}z_2, C e^{sZ} z_2 \rangle_\RR ds +i \int_0^{+\infty} \langle \zeta, e^{sZ}z_2 \rangle_\RR ds \right ).
\end{equation}
\end{lemma}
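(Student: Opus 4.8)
The plan is to first settle the purely linear-algebraic items (1)--(3) from the spectral decomposition of $\mathbf{Z}$, then prove the symplectic orthogonality (4), deduce nondegeneracy (5), and finally read the asymptotic factorization off the explicit action \eqref{eq:aW}. The genuinely new input is (4); everything else is organizational or a direct computation.

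For (1)--(3): by \eqref{eq:hypo2} every eigenvalue of $\mathbf{Z}$ has nonpositive real part, and by Lemma \ref{lem:nonex} the purely imaginary ones are semisimple. Hence $\CC^{2d}$ splits as the direct sum of the eigenspaces for purely imaginary eigenvalues (whose real span is $\mathcal{V}_0$) and the generalized eigenspaces for eigenvalues with strictly negative real part (whose real span I call $\mathcal{V}_-$); the latter is precisely the stable subspace, so $V_-$ indeed equals the set where $e^{tZ}z\to 0$, and the decay there is exponential. Since these two families of (generalized) eigenspaces exhaust $\CC^{2d}$ and meet only in $0$, and both $\mathcal{V}_0,\mathcal{V}_-$ are $\mathbf{Z}$-invariant real subspaces, statements (1), (2), (3) follow after translating back to $\CC^d$.

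Item (4) is the step I expect to be the main obstacle, and the crux is to show that $K:=\mathbf{Z}^*\mathbf{J}+\mathbf{J}\mathbf{Z}$, a real skew-symmetric matrix, annihilates $\mathcal{V}_0$. With this notation \eqref{eq:cz} reads $\mathbf{C}+iK\geq 0$ on $\CC^{2d}$. For a real vector $\mathbf{u}\in\mathcal{V}_0\subseteq\ker\mathbf{C}$ (the inclusion is Theorem \ref{prop:perif}) one has $\langle\mathbf{u},(\mathbf{C}+iK)\mathbf{u}\rangle=\mathbf{u}^T\mathbf{C}\mathbf{u}+i\,\mathbf{u}^TK\mathbf{u}=0$, since $\mathbf{C}\mathbf{u}=0$ and $\mathbf{u}^TK\mathbf{u}=0$ for a real vector and a skew-symmetric $K$; positivity of $\mathbf{C}+iK$ then forces $(\mathbf{C}+iK)\mathbf{u}=0$, and as $\mathbf{C}\mathbf{u}=0$ this reads $iK\mathbf{u}=0$, whence $K\mathbf{u}=0$. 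Because $\mathcal{V}_0$ has a real basis, $K$ vanishes on all of $\mathcal{V}_0$. Now for $z\in V_0$ and arbitrary $w$ I differentiate $g(t):=\sigma(e^{tZ}z,e^{tZ}w)=\langle e^{t\mathbf{Z}}\mathbf{z},\mathbf{J}e^{t\mathbf{Z}}\mathbf{w}\rangle$, getting $g'(t)=\langle e^{t\mathbf{Z}}\mathbf{z},Ke^{t\mathbf{Z}}\mathbf{w}\rangle=-\langle Ke^{t\mathbf{Z}}\mathbf{z},e^{t\mathbf{Z}}\mathbf{w}\rangle=0$, where the last equality uses $\mathbf{Z}$-invariance of $\mathcal{V}_0$ (so $e^{t\mathbf{Z}}\mathbf{z}\in\mathcal{V}_0$) together with $K|_{\mathcal{V}_0}=0$. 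Thus $g$ is constant, equal to $\sigma(z,w)$. Taking $w\in V_-$ and $t\to\infty$, the factor $e^{tZ}w\to 0$ while $e^{tZ}z$ stays bounded, so $g(t)\to 0$; constancy gives $\sigma(z,w)=0$, which is (4); the equivalent commutation of Weyl operators follows from \eqref{eq.expccr}.

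Statement (5) is then immediate: a nonzero $z\in V_0$ symplectically orthogonal to $V_0$ would, by (4), be orthogonal to $V_0\oplus V_-=\CC^d$, contradicting nondegeneracy of $\sigma$, and symmetrically for $V_-$. For the asymptotic formula I substitute $z=z_1+z_2$ into \eqref{eq:aW}. As $e^{sZ}z_1\in V_0\subseteq\ker C$ for all $s$ and $C$ is symmetric, the $z_1$-term and the cross terms in the diffusion integrand vanish, so only the $z_2$-part survives there; meanwhile $\sigma(e^{tZ}z_1,e^{tZ}z_2)=\sigma(z_1,z_2)=0$ kills the cocycle phase in \eqref{eq.expccr}, giving $W(e^{tZ}z)=W(e^{tZ}z_1)W(e^{tZ}z_2)$. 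Recognizing the $z_1$-contribution as $\TT_t(W(z_1))$ via \eqref{eq:nnW}, I obtain $\TT_t(W(z))=A(z_2,t)\,\TT_t(W(z_1))\,W(e^{tZ}z_2)$, where $A(z_2,t)$ is the partial integral up to time $t$. Since $z_2\in V_-$ decays exponentially, both integrals converge and $A(z_2,t)\to A(z_2,\infty)$, while $W(e^{tZ}z_2)\to\mathbb{1}$ strongly. Writing the difference as $\TT_t(W(z_1))\bigl[A(z_2,t)W(e^{tZ}z_2)-A(z_2,\infty)\mathbb{1}\bigr]$ and using that $\TT_t(W(z_1))$ is unitary (a phase times a Weyl operator) while the bracket tends to $0$ strongly, the whole expression tends to $0$ strongly, hence, being uniformly norm-bounded, in the w$^*$-topology.
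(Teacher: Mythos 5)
Your proof is correct, and items (1)--(3), (5) and the final asymptotic factorization follow essentially the same lines as the paper (spectral splitting of $\mathbf{Z}$ with real bases, nondegeneracy by exclusion, and direct substitution of $z=z_1+z_2$ into \eqref{eq:aW} followed by $A(z_2,t)\to A(z_2,\infty)$ and $W(e^{tZ}z_2)\to\mathbb{1}$ strongly). Where you genuinely diverge is the key step (4). The paper argues dynamically and operator-algebraically: it compares $\TT_t(W(z+w))=e^{i\sigma(z,w)}\TT_t(W(z)W(w))$ with $\TT_t(W(z))\TT_t(W(w))$ (using multiplicativity on $\NN$), shows both have the same nonzero ${\rm w}^*$-limit behaviour because $W(e^{tZ}w)\to\mathbb{1}$ strongly, and extracts $\sigma(z,w)=0$ from the resulting phase identity. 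You instead work entirely at the level of the matrices: from \eqref{eq:cz} and $\mathcal{V}_0\subseteq\ker\mathbf{C}$ you deduce, via the standard ``zero diagonal entry of a positive semidefinite matrix kills the row'' argument, that $K=\mathbf{Z}^*\mathbf{J}+\mathbf{J}\mathbf{Z}$ annihilates $\mathcal{V}_0$, and then conservation of $\sigma(e^{tZ}z,e^{tZ}w)$ plus decay on $V_-$ gives orthogonality. This is a clean, purely finite-dimensional alternative that avoids ${\rm w}^*$-limits at this stage; as a bonus it actually \emph{proves} the identity $(\mathbf{Z}^*\mathbf{J}+\mathbf{J}\mathbf{Z})_{|\mathcal{V}_0}=0$, which the paper asserts and uses later in the proof of Lemma \ref{lem:scb} without an explicit derivation. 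The paper's route, in exchange, stays closer to the Weyl-operator formalism used throughout and yields the asymptotic factorization as a by-product of the same computation. (One cosmetic point in the paper's version that your argument sidesteps: concluding $\sigma(z,w)=0$ from $e^{i\sigma(z,w)}=1$ strictly requires rescaling $z$ by an arbitrary real parameter; your argument gives $\sigma(z,w)=0$ directly.)
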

\begin{proof}
1. and 2. can be easily checked using the definitions of $V_0$ and $V_-$.

3. It suffices to show that the real dimensions of ${\cal V}_0$ and ${\cal V}_-$ sum up to $2d$. Let us define ${\cal W}_-$ and ${\cal W}_0$ as the ranges of the spectral projections of $\mathbf{Z}$ corresponding to $\{\Re(z)<0\}$ and $i\RR$, respectively. One can easily see that
$${\cal W}_-:=\left \{z \in \CC^{2d}:\lim_{t \rightarrow +\infty}e^{t\mathbf{Z}}z=0 \right \}.$$
Moreover, we already observed that 
$${\cal W}_0=:{\rm span}_{\CC}{\cal V}_0.$$
The complex dimensions of ${\cal W}_0$ and ${\cal W}_-$ sum up to $2d$. Since $\mathbf{Z}$ has real entries, if $z \in {\cal W}_-$, then $\overline{z} \in {\cal W}_-$ as well; this implies that one can pick a basis for ${\cal W}_-$ composed of vectors with real coefficients, hence the real dimension of
$${\cal V}_-:=\left \{z \in \RR^{2d}:\lim_{t \rightarrow +\infty}e^{t\mathbf{Z}}z=0 \right \}$$
is equal to the complex dimension of ${\cal W}_-$ and we are done.

4. We remark that $V_{0} \subseteq \ker(C)$; since $C$ is positive, one has that $V_{0} \subseteq {\rm rank}(C)^\perp$ as well. Since the symplectic form appears in the commutation relation of the corresponding Weyl operators, it is natural to compare $\TT_t(W(z+w))=e^{i\sigma( z, w )}\TT_t(W(z)W(w))$ and $\TT_t(W(z)W(w))$:
\[
\TT_t(W(z+w))=A(w,t)\exp \left (i \int_0^t \langle \zeta, e^{sZ}z \rangle_\RR ds \right )W(e^{tZ}(z+w))
\]
where
\[
A(w,t)=\exp\left (-\frac{1}{2} \int_0^t \langle e^{sZ}w, C e^{sZ} w \rangle_\RR ds +i \int_0^t \langle \zeta, e^{sZ}w \rangle_\RR ds \right ),
\]
and
\[
\begin{split}\TT_t(W(z)W(w))&=\TT_t(W(z))\TT_t(W(w))\\
&=A(w,t)\exp \left (i \int_0^t \langle \zeta, e^{sZ}z \rangle_\RR ds \right )W(e^{tZ}(z))W(e^{tZ}w)).
\end{split}
\]
The first equality in the previous equation is true thanks to the multiplicative properties of the elements of $\NN$ (see the proof of Theorem 3.1 in \cite{Ev77}). Notice that
\[
{\rm w}^*\text{-}\lim_{t \rightarrow +\infty} W(e^{tZ}(z+w))-W(e^{tZ}z)=0.
\]
Moreover, since $W(e^{tZ}w)$ converges to $\mathbf{1}$ strongly (thanks to the regularity of the Fock representation), then one has that
\[
{\rm w}^*\text{-}\lim_{t \rightarrow +\infty} W(e^{tZ}(z))W(e^{tZ}w))-W(e^{tZ}z)=0
\]
as well. Therefore,
\[\begin{split}
&{\rm w}^*\text{-}\lim_{t \rightarrow +\infty}(e^{i\sigma( z, w )}-1)\TT_t(W(z))\TT_t(W(w))=\\
&{\rm w}^*\text{-}\lim_{t \rightarrow +\infty}\TT_t(W(z+w))-\TT_t(W(z)W(w))=0
\end{split}\]
and, since $\TT_t(W(z))\TT_t(W(w))$ does not converge to $0$ for $t \rightarrow +\infty$, then $\sigma( z, w  )=0$.

5. Let us consider $z \in V_0$ such that $\sigma( z, w )=0$ for all $w \in V_0$; notice that item 4. implies that $\sigma( z, w )=0$ for $w \in V_-$ as well. Since $\CC^d=V_0 \oplus V_-$ and $\sigma (\cdot, \cdot )$ is nondegenerate on $\CC^d$, we can conclude that $z=0$.
\end{proof}

The following corollary is an easy consequence of item 5. in the previous Lemma.

\begin{coro} \label{coro:Ifactor}
If \eqref{eq:hypo1} holds true, then $\NN$ is a factor of type $I$.
\end{coro}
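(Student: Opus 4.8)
The plan is to reduce the statement to the triviality of the center $Z(\NN)$. Indeed, Proposition \ref{prop:prevNN} already tells us that $\NN$ is of type $I$, via the spatial isomorphism $\NN \simeq L^\infty(\RR^{d_c};\CC)\,\overline{\otimes}\,B(\Gamma(\CC^{d_f}))$, in which the center corresponds precisely to the abelian tensor factor $L^\infty(\RR^{d_c};\CC)\otimes\mathbb{1}$. Hence $\NN$ is a type $I$ factor if and only if $d_c=0$, equivalently $Z(\NN)=\CC\mathbf{1}$. So it suffices to prove that the center is trivial.

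To compute the center, I would use the description recalled in the remark following Proposition \ref{prop:auto}: since $\TT$ acts on $\NN$ as a group of $^*$-automorphisms (Proposition \ref{prop:auto}) and $\NN=\{W(z):z\in V\}''$, one has
$$Z(\NN)=\{W(z):\ z\in V,\ \sigma(z,w)=0\ \text{for all}\ w\in V\}''.$$
Under \eqref{eq:hypo1}, Theorem \ref{prop:perif} identifies $V$ with $V_0$, so the center is generated by exactly those Weyl operators $W(z)$ with $z\in V_0$ lying in the symplectic complement of $V_0$ inside itself.

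The decisive step is item 5 of Lemma \ref{lem:so}: the symplectic form is nondegenerate on $V_0$. This means precisely that any $z\in V_0$ satisfying $\sigma(z,w)=0$ for every $w\in V_0$ must vanish. Therefore the generating set collapses to $\{W(0)\}=\{\mathbf{1}\}$, giving $Z(\NN)=\CC\mathbf{1}$; combined with the type $I$ structure above this yields that $\NN$ is a type $I$ factor. I do not expect any real obstacle at this stage: all the analytic content has already been absorbed into Lemma \ref{lem:so}(5) (whose proof in turn rests on the asymptotic comparison of $\TT_t(W(z+w))$ with $\TT_t(W(z))\TT_t(W(w))$), and the corollary itself is a purely algebraic bookkeeping step layered on top of the nondegeneracy of $\sigma$ on $V_0$.
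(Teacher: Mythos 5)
Your argument is correct and is exactly the route the paper intends: the paper states that the corollary "is an easy consequence of item 5" of Lemma \ref{lem:so}, and your proposal simply fills in the bookkeeping (type $I$ structure from Proposition \ref{prop:prevNN}, the Weyl-operator description of $Z(\NN)$ from the remark, identification of $V$ with $V_0$ via Theorem \ref{prop:perif}, and nondegeneracy of $\sigma$ on $V_0$ to kill the center). No discrepancies to report.
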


\section{Characterization of GQMSs with normal invariant states} \label{sec:char}

In this section we will provide a characterization of those GQMSs admitting a normal invariant state in terms of $Z$, $C$ and $\zeta$ (or, equivalently, $H$ and $L_l$'s appearing in the form generator). We will start assuming that $\TT$ admits a normal invariant state and derive several consequences of this assumption up to the point where such properties will also be sufficient for $\TT$ to have a normal invariant state.

First of all, we show that if \eqref{eq:hypo1} holds, then there is a representative $\widetilde{\TT}\in [\TT]$ for which the corresponding $\widetilde{Z}$ and $\widetilde{C}$ have a simple form. We can reduce to study $\widetilde{\TT}$ keeping in mind that every result we derive can be translated in terms of $\TT$ conjugating every operator using the unitary operator connecting $\TT$ to $\widetilde{\TT}$.

\begin{lemma} \label{lem:scb}
    If \eqref{eq:hypo1} holds true, there exists a symplectic transformation $M$ such that
    \begin{align*}
        &V^M_{0}:=M(V_{0})={\rm span}_\RR\{e_1,\dots, e_{d_0}, ie_1,\dots, ie_{d_0}\}, \\
        &V^M_-:= M(V_-)={\rm span}_\RR\{e_{d_0+1},\dots, e_{d},ie_{d_0+1},\dots, ie_{d}\},
        \end{align*}
    where $d_0={\dim}_\CC(V_0)$ and such that $Z^M:=MZM^{-1}$ restricted to $V_0^M$ reads as (when one picks the basis $e_1,\dots, e_{d_0}, ie_1,\dots, ie_{d_0}$)
    \[
    \begin{pmatrix} O_{d_0} & -\Phi \\
    \Phi & O_{d_0}
    \end{pmatrix},
    \]
    where $\Phi$ is a diagonal matrix with diagonal entries $\phi_1,\dots, \phi_{d_0} \in \RR$. Therefore,
    $$(Z^{M}_{|V_0^M})^{\sharp} + Z^{M}_{|V_0^M}=0 \text{ and } \tr(Z^M_{|V_0^M})=0.$$
\end{lemma}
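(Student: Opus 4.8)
The plan is to build $M$ in two stages: a first symplectic map $M_1$ that straightens the decomposition $\CC^d = V_0 \oplus V_-$ onto the standard coordinate subspaces, and a second map $M_2$, supported on $V_0$, that puts $Z_{|V_0}$ into the desired rotation-block form. First I would exploit the symplectic geometry established in Lemma \ref{lem:so}: items 4 and 5 say that $\CC^d = V_0 \oplus V_-$ is a $\sigma$-orthogonal direct sum and that $\sigma$ is nondegenerate on each summand, so the real dimension of ${\cal V}_0$ is $2d_0$ and that of ${\cal V}_-$ is $2(d-d_0)$, with $d_0 = \dim_\CC V_0$. A symplectic Gram--Schmidt procedure then produces a symplectic basis of $(V_0,\sigma)$ and, independently, one of $(V_-,\sigma)$; their union is a symplectic basis of $(\CC^d,\sigma)$, and the real-linear map sending it to the standard basis $e_1,\dots,e_d,ie_1,\dots,ie_d$ is symplectic by construction. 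Calling it $M_1$, the images $M_1(V_0)$ and $M_1(V_-)$ are exactly the coordinate subspaces in the statement, and $Z^{M_1}=M_1 Z M_1^{-1}$ still leaves $M_1(V_0)$ invariant since $V_0$ is $Z$-invariant.

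The second stage is the heart of the argument. On $V_0\subseteq\ker(C)$ (Theorem \ref{prop:perif}) the dynamics is purely Hamiltonian, so $e^{tZ}_{|V_0}$ is a one-parameter group of symplectic transformations of $V_0$; equivalently $Z_{|V_0}$ lies in the symplectic Lie algebra and satisfies Eq. \eqref{eq:symp2} on $V_0$. By Lemma \ref{lem:nonex} its eigenvalues are purely imaginary with coinciding algebraic and geometric multiplicities, so $Z_{|V_0}$ is semisimple and the group $e^{tZ}_{|V_0}$ is bounded. I would then invoke the normal form for such generators: a bounded one-parameter subgroup of $\mathrm{Sp}(V_0)\cong\mathrm{Sp}(2d_0,\RR)$ sits inside a maximal compact subgroup, which is symplectically conjugate to the standard $U(d_0)$, so its semisimple generator is conjugate, by a symplectic transformation of $V_0$, to a direct sum of planar rotation generators. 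Concretely, pairing each eigenvector $w$ for $i\phi_j$ with $\overline{w}$ and rescaling so that $\Re w,\Im w$ is symplectically normalized yields a symplectic basis in which $Z_{|V_0}$ takes precisely the block form $\begin{pmatrix} O_{d_0} & -\Phi \\ \Phi & O_{d_0}\end{pmatrix}$ with $\Phi=\mathrm{diag}(\phi_1,\dots,\phi_{d_0})$. This produces a symplectic $M_2$ of $V_0$; extending $M_2$ by the identity on $V_-$ gives a global symplectic transformation (legitimate precisely because the sum is $\sigma$-orthogonal, so a block-diagonal map with symplectic blocks is symplectic), and $M:=M_2 M_1$ is the desired map.

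The concluding identities are then immediate: in the orthonormal (for $\langle\cdot,\cdot\rangle_\RR$) basis $e_1,\dots,e_{d_0},ie_1,\dots,ie_{d_0}$ the matrix $\begin{pmatrix} O_{d_0} & -\Phi \\ \Phi & O_{d_0}\end{pmatrix}$ is skew-symmetric, so $\sharp$ coincides with transposition and we get $(Z^{M}_{|V_0^M})^{\sharp}+Z^{M}_{|V_0^M}=0$ and $\tr(Z^M_{|V_0^M})=0$. The hard part will be the second stage: one must guarantee that the diagonalizing change of basis is genuinely symplectic (and lands in $\mathrm{Sp}(V_0)$), not merely a linear conjugacy as furnished by the boundedness of the group alone. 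This is exactly where the nondegeneracy of $\sigma$ on $V_0$ (Lemma \ref{lem:so}, item 5) is used, since it is what makes the symplectic normalization of each eigenvector pair possible and upgrades the ``similar to special orthogonal'' statement to a symplectic normal form.
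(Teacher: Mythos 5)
Your proposal is correct, and it reaches the normal form by a genuinely different route at the one point where the paper does real work. Both arguments share the same skeleton: the $\sigma$-orthogonal splitting $\CC^d=V_0\oplus V_-$ with $\sigma$ nondegenerate on each summand (Lemma \ref{lem:so}), the semisimplicity of $Z_{|V_0}$ with purely imaginary spectrum (Lemma \ref{lem:nonex}), and the fact that $\mathbf{Z}^*\mathbf{J}+\mathbf{J}\mathbf{Z}$ vanishes on ${\cal V}_0$ so that $Z_{|V_0}$ lies in the symplectic Lie algebra of $(V_0,\sigma)$. Where you diverge is in producing the symplectic conjugation on $V_0$: you invoke the conjugacy of compact subgroups of $\mathrm{Sp}(2d_0,\RR)$ into the maximal compact $U(d_0)$ (legitimate, since the closure of the bounded group $\{e^{tZ}_{|V_0}\}$ is compact) followed by diagonalization of the skew-Hermitian generator inside $U(d_0)$; the paper instead constructs the symplectic eigenbasis by hand, first showing that eigenspaces for distinct angles are $\sigma$-orthogonal via the $\mathbf{Z}^2$ trick, and then running an induction on the multiplicity of each fixed angle $\phi\neq 0$, whose engine is the identity $\langle w_{j_l},\mathbf{J}\overline{w_{j_k}}\rangle=0$ together with an explicit argument that some $v$ in the eigenspace has $\langle\Re(v),\mathbf{J}\Im(v)\rangle\neq 0$ (otherwise $\sigma$ would be degenerate there). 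Your abstract route buys brevity and makes the mechanism transparent (bounded symplectic one-parameter groups are rotations in disguise), at the cost of citing structure theory the paper avoids; the paper's route is elementary and yields an explicit recipe for $M$, which the authors point out after the statement. One caution: your ``concrete'' fallback --- pair each eigenvector $w$ with $\overline{w}$ and rescale --- is not by itself complete, since for a repeated angle a given eigenvector may have $\langle\Re(w),\mathbf{J}\Im(w)\rangle=0$ and distinct eigenvectors for the same angle need not be $\sigma$-orthogonal; closing that gap is exactly the paper's induction, and you correctly identify it as the hard part, so nothing is missing provided you lean on the Lie-theoretic argument (or reproduce the induction). A final small point you handle implicitly: when $\langle\Re(w),\mathbf{J}\Im(w)\rangle<0$ one must replace $w$ by $\overline{w}$, which flips the sign of the corresponding $\phi_j$; this is harmless since the statement allows $\phi_j\in\RR$ of either sign.
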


We remark that $M$ in Lemma \ref{lem:scb} is not unique, however in the following proof one can find an explicit recipe for building such an $M$, given $Z$.

\begin{proof}

We showed that there exist $\phi_1, \dots, \phi_{d_0} \in \RR$ and $w_1,\dots, w_{d_0} \in \CC^{2d}$ such that
$$\mathbf{Z}w_j=i\phi_j w_j, \quad \mathbf{Z}\overline{w_j}=-i\phi_j \overline{w_j}$$
and ${\cal V}_0={\rm span}_\RR\{\Re(w_j), \Im(w_j)\}$. We recall that
$${\cal V}_0=\left \{ \begin{pmatrix} x\\ y \end{pmatrix} \in \RR^{2d}: x+iy \in V_0\right \}.$$

Notice that
$$\mathbf{Z}\Re(w_j)=-\phi_j\Im( w_j), \quad \mathbf{Z}\Im(w_j)=\phi_j \Re(w_j).$$
Once we prove that we can pick $w_j$'s such that for every $j,k=1,\dots, d_0$ 
$$\langle \Re(w_j), \mathbf{J} \Im(w_k) \rangle =\delta_{jk} \quad \langle \Re(w_j), \mathbf{J} \Re(w_k) \rangle=0, \quad \langle \Im(w_j), \mathbf{J} \Im(w_k) \rangle=0,$$
we can define $\mathbf{M}$ such that
$\mathbf{M}\Re(w_j)=e_j, \quad \mathbf{M}\Im(w_j)=ie_j$
and extend it to any symplectic transformation to the whole $\RR^{2d}$ using Proposition 12, item 1. in \cite{AFP22}.

First of all, let us consider $\phi_j \neq \phi_k$, then for every $x \in {\rm span}_\RR\{\Re(w_j), \Im(w_j)\}$, $y \in {\rm span}_\RR\{\Re(w_k), \Im(w_k)\}$ one has 
\[-\phi_j^2 \langle x, \mathbf{J} y \rangle = \langle \mathbf{Z}^2 x, \mathbf{J} y \rangle= \langle  x, \mathbf{J} \mathbf{Z}^2y \rangle=-\phi_k^2 \langle x, \mathbf{J} y \rangle,\]
hence $\langle x, \mathbf{J} y \rangle=0$.

In other words, subspaces corresponding to different angles are symplectically orthogonal and $\langle \cdot, \mathbf{J} \cdot \rangle$ is non-degenerate when restricted to any susbpace corresponding to a fixed angle $\phi$; we can immediately deduce that we can find a symplectic basis for the kernel of $\mathbf{Z}$, i.e. the subspace corresponding to $\phi_j$'s equal to $0$.

Let us now focus on the subspaces corresponding to non-zero angles: let us consider all the $w_{j_1}, \dots, w_{j_n}$ corresponding to $\phi_j$'s equal to a certain $\phi \neq 0$. We will prove the statement by induction: if $n=1$, since the symplectic form is non-degenerate when restricted to ${\rm span}_\RR\{\Re(w_{j_1}), \Im(w_{j_1})\}$, one has that $\langle\Re(w_{j_1}), \mathbf{J}\Im(w_{j_1}) \rangle \neq 0$ and we can pick real multiples of $\Re(w_{j_1})$ and $\Im(w_{j_1})$ which are a symplectic basis.

Now let us consider $n \geq 2$. First of all, notice that for every $l,k=1, \dots, n$, one has $\langle w_{j_l}, \mathbf{J} \overline{w_{j_k}} \rangle=0$: indeed,
$$
-i\phi \langle w_{j_l}, \mathbf{J} \overline{w_{j_k}} \rangle=\langle \mathbf{Z}w_{j_l}, \mathbf{J} \overline{w_{j_k}} \rangle=-\langle w_{j_l}, \mathbf{J} \mathbf{Z}\overline{w_{j_k}} \rangle=i\phi \langle w_{j_l}, \mathbf{J} \overline{w_{j_k}} \rangle,$$
where we used that on ${\cal V}_0$ one has $\mathbf{Z}^*\mathbf{J}+\mathbf{J}\mathbf{Z}=0$. This implies that $\langle w_{j_l}, \mathbf{J} \overline{w_{j_k}} \rangle=0$. In terms of $\Re(w_{j})$'s and $\Im(w_j)$'s, this translates as
\begin{equation} \label{eq:lemmaM1}
    \begin{split}
        &\langle \Re(w_{j_l}), \mathbf{J} \Re(w_{j_k}) \rangle = \langle \Im(w_{j_l}), \mathbf{J} \Im(w_{j_k}) \rangle, \\
        &\langle \Re(w_{j_l}), \mathbf{J} \Im(w_{j_k}) \rangle = \langle \Re(w_{j_k}), \mathbf{J} \Im(w_{j_l}) \rangle.
    \end{split}
\end{equation}

We claim that there exists $v \in {\rm span}_\CC \{w_{j_1}, \dots, w_{j_n}\}$ such that
\begin{equation} \label{eq:lemmaM2}
\langle \Re(v), \mathbf{J} \Im(v) \rangle =1.
\end{equation}
In this case, we can choose $\tilde{w}_{j_1}, \dots, \tilde{w}_{j_{n-1}} \in {\rm span}_\CC \{w_{j_1}, \dots, w_{j_n}\}$ such that $\{\Re(\tilde{w}_{j_l}), \Im(\tilde{w}_{j_l})\}$ is symplectically orthogonal to $\{\Re(v), \Im(v)\}$. Notice that, using Eq. \eqref{eq:lemmaM1}, we can reduce to check only that $\{\Re(\tilde{w}_{j_l})\}$ is symplectically orthogonal to $\{\Re(v), \Im(v)\}$ and this is always possible (every vector in ${\rm span}_{\RR}\{\Re(w_{j_i}), \Im(w_{j_i}): \, i=1,\dots n\}$ can be expressed as the real part of a vector in ${\rm span}_{\mathbb{C}}\{w_{j_1}, \dots, w_{j_n}\}$). Therefore we can write
$${\rm span}_{\RR}\{\Re(v), \Im(v)\} \oplus {\rm span}_{\RR}\{\Re(\tilde{w}_{j_l}), \Im(\tilde{w}_{j_l})\}_{l=1}^{n-1}$$
and we can use the inductive step.

Let us show that we can find $v$ that satisfies Eq. \eqref{eq:lemmaM2}: by contradiction, suppose that there is not such a $v$. Let us write $v=\sum_{l=1} (\alpha_l + i \beta_l) w_{j_l}$ for $\alpha_l, \beta_l \in \RR$, then one can easily see that
$$\Re(v)=\sum_{l=1}^{n} \alpha_l \Re(w_{j_l})-\beta_l \Im(w_{j_l}), \quad \Im(v)=\sum_{l=1}^{n} \alpha_l \Im(w_{j_l})+\beta_l \Re(w_{j_l}).$$
Therefore, for every $\alpha_l, \beta_l \in \RR$ one has
\[
\begin{split}
\langle \Re(v), \mathbf{J} \Im(v) \rangle &=\sum_{l=1}^{n} (\alpha_l^2+\beta_l^2) \langle \Re(w_{j_l}), \mathbf{J} \Im(w_{j_l}) \rangle \\
&+\sum_{l\neq k=1}^{n} (\alpha_l \beta_k - \beta_l \alpha_k) \langle \Re(w_{j_l}), \mathbf{J} \Re(w_{j_k}) \rangle\\
&+ \sum_{l\neq k=1}^{n} (\alpha_l \alpha_k + \beta_l \beta_k) \langle \Re(w_{j_l}), \mathbf{J} \Im(w_{j_k}) \rangle=0,
\end{split}\]
where we used Eq. \eqref{eq:lemmaM1}. However, this can only be true if
\begin{itemize}
\item $\langle \Re(w_{j_l}), \mathbf{J} \Im(w_{j_l}) \rangle=0$ for $l=1,\dots, n$ (picking $\alpha_l=\delta_{lk}, \beta_l\equiv 0$ for $k=1,\dots, d_0$),
\item $\langle \Re(w_{j_l}), \mathbf{J} \Re(w_{j_k}) \rangle=0$ for $l\neq k =1, \dots, n$ (picking $\alpha_l=\delta_{lm}$, $\beta_l=\delta_{lp}$ for $m\neq p =1,\dots, d_0$,
\item $\langle \Re(w_{j_l}), \mathbf{J} \Im(w_{j_k}) \rangle=0$ for $l\neq k=1,\dots, d_0$ (picking $\alpha_l=\delta_{lm}+\delta_{lp}$, $\beta_l \equiv 0$ for $m\neq p=1, \dots, n$).
\end{itemize}
We got to a contradiction, because this would imply that $\langle \cdot, \mathbf{J} \cdot \rangle$ is degenerate on ${\rm span}_{\RR} \{\Re(w_{j_l}), \Im(w_{j_l})\}_{l=1}^{n}$.
\end{proof}

From now on, \textbf{we will denote by $\widetilde{\TT}$ the representative in $[\TT]$ corresponding to the symplectic transformation in Lemma \ref{lem:scb}}; every symbol with a tilde on top will denote the same object, but referred to $\tilde{\TT}$, instead of $\TT$, e.g. $\widetilde{Z}$, $\widetilde{C}$, $\widetilde{V}_0$, $\widetilde{V}_-$ and so on. $\widetilde{P}_0$ (resp. $\widetilde{P}_-$) will denote the ortoghonal projection onto $\widetilde{V}_0=\{e_1,\dots, e_{d_0}, ie_1, \dots, ie_{d_0}\}$ (resp. $\widetilde{V}_-=\{e_{d_0+1},\dots, e_{d}, ie_{d_0+1}, \dots, ie_{d}\}$). We will often make use of the identification
$\FS=\Gamma(\widetilde{V}_{0}) \otimes \Gamma(\widetilde{V}_-)$.

The first result of this section points out the structure of $Z$ and $C$ in case \eqref{eq:hypo1} is true and how this translates in terms of $H$ and $L_l$'s.

\begin{lemma}
  Let $d_0$ be an integer smaller or equal than $d$ and $\Phi$ the diagonal matrix with diagonal entries given by $\phi_1,\dots, \phi_{d_0}$. The following statements are equivalent:
  \begin{enumerate}
      \item in the canonical basis, $\mathbf{Z}$ and $\mathbf{C}$ read as follows 
      $$\mathbf{Z}=\begin{pmatrix} O & -\Phi & O \\
      \Phi & O & O\\
      O & O & \mathbf{Z}_- \end{pmatrix}, \quad \mathbf{C}=\begin{pmatrix} O & O & O\\
      O & O & O\\
      O & O & \mathbf{C}_- \end{pmatrix},$$
      where $O$ are null blocks and the blocks correspond to the subspaces $V^Q:={\rm span}_{\RR}\{f_1,\dots, f_{d_0}\}$, $V^P:={\rm span}_{\RR}\{if_1,\dots, if_{d_0}\}$ and $W:={\rm span}_{\RR}\{f_{d_0+1},\dots, f_{d}, if_{d_0+1},\dots, if_{d}\}$.
      \item $H$ and $L_l$'s introduced in Eq. \eqref{eq:hami} are of the form $H=H_0+H_-$, where
  \begin{align} 
  &H_0:=\sum_{j=1}^{d_0} \phi_j a^\dagger_j a_j + \frac{1}{2}\sum_{j=1}^{d_0} \zeta_j a_j^\dagger + \overline{\zeta}_j a_j,\\ \label{eq:H0}
  &H_-:=\sum_{k,j={d_0+1}}^{d} \left ( \Omega_{jk} a^\dagger_j a_k + \frac{\kappa_{jk}}{2}a_j^\dagger a_k^\dagger+\frac{\overline{\kappa}_{jk}}{2}a_j a_k\right )+\frac{1}{2}\sum_{j=d_0+1}^{d} \zeta_j a_j^\dagger + \overline{\zeta}_j a_j,
  \end{align}
  and
  $$L_l=\sum_{j=d_0+1}^{d} \overline{v}_{lj}a_j + u_{lj} a_j^\dagger.$$
  \end{enumerate}

Moreover, if \eqref{eq:hypo1} is true, then previous conditions hold for $\widetilde{\TT}$. \end{lemma}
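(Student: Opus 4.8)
The plan is to read statements 1 and 2 as two encodings of the same data and to verify their equivalence directly through the dictionary between a real-linear operator $A$ on $\CC^d$ and its real $2d\times2d$ matrix $\mathbf A$ recalled in Section \ref{sec:np}; the ``Moreover'' clause then follows by feeding the structural results of Section \ref{sec:df} into this equivalence. First I would split $Z$ and $C$ into their complex-linear components $Zz=Z_1z+Z_2\overline z$, $Cz=C_1z+C_2\overline z$, so that by \eqref{eq:Z} and \eqref{eq:C}
\begin{align*}
&Z_1=\tfrac12(U^T\overline U-V^T\overline V)+i\Omega, \quad Z_2=\tfrac12(U^TV-V^TU)+i\kappa,\\
&C_1=U^T\overline U+V^T\overline V, \quad C_2=U^TV+V^TU,
\end{align*}
and translate the two block conditions on $\mathbf Z,\mathbf C$ into conditions on $Z_1,Z_2,C_1,C_2$ by means of the explicit expression for $\mathbf A$ in terms of $\Re(A_i),\Im(A_i)$. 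Throughout one must keep straight the reordering of the canonical real basis of $\RR^{2d}$ into the three blocks $V^Q,V^P,W$.

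I would deal with $\mathbf C$ first. The vanishing of the $V^Q$ and $V^P$ diagonal blocks of $\mathbf C$ gives $(\Re C_1+\Re C_2)_{jj}=(\Re C_1-\Re C_2)_{jj}=0$ for $j\le d_0$, hence $(C_1)_{jj}=\sum_{\ell}\bigl(|u_{\ell j}|^2+|v_{\ell j}|^2\bigr)=0$. This forces $u_{\ell j}=v_{\ell j}=0$ for all $\ell$ and all $j\le d_0$, which is exactly the stated form of the $L_\ell$'s; conversely these vanishings give at once the full block form of $\mathbf C$. This positivity step is the one genuinely non-formal ingredient of the equivalence.

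Next I would treat $\mathbf Z$. Once $U,V$ have no columns among the first $d_0$ indices, on those indices $Z_1=i\Omega$ and $Z_2=i\kappa$, so the four $d_0\times d_0$ blocks of $\mathbf Z|_{V^Q\oplus V^P}$ are built only out of $\Omega$ and $\kappa$. Matching them with $\left(\begin{smallmatrix}O&-\Phi\\\Phi&O\end{smallmatrix}\right)$ yields $\Im(\Omega)=0$ and $\Re(\Omega)=\Phi$ (so $\Omega$ restricted to the first $d_0$ modes is the real diagonal matrix $\Phi$) together with $\kappa=0$ there, while the vanishing of the blocks coupling $V^Q\oplus V^P$ to $W$ forces $\Omega$ and $\kappa$ to be block diagonal. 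These are precisely the conditions isolating $H_0$ and $H_-$ as in \eqref{eq:H0}, so the Hamiltonians match; reading every identity backwards gives the reverse implication, completing the equivalence of 1 and 2.

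Finally, for the ``Moreover'' clause, note that for $\widetilde\TT$ the subspaces $\widetilde V_0$ and $\widetilde V_-$ coincide, by the construction in Lemma \ref{lem:scb}, with $V^Q\oplus V^P$ and $W$ in the canonical basis. Theorem \ref{prop:perif} gives $\widetilde V_0\subseteq\ker\widetilde C$, hence the block form of the matrix $\widetilde{\mathbf C}$; Lemma \ref{lem:so} gives that $\widetilde V_0$ and $\widetilde V_-$ are both $\widetilde Z$-invariant, hence $\widetilde{\mathbf Z}$ has no blocks coupling them; and Lemma \ref{lem:scb} supplies the rotation form $\left(\begin{smallmatrix}O&-\Phi\\\Phi&O\end{smallmatrix}\right)$ of $\widetilde Z$ on $\widetilde V_0$. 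Together these are exactly statement 1 for $\widetilde\TT$, which by the equivalence already proved yields statement 2. The main obstacle I anticipate is not conceptual but the careful block bookkeeping in the real matrix formula for $\mathbf A$; the only substantive point is the positivity argument for $C_1$ that converts a statement about the matrix $\mathbf C$ into the sharp vanishing of the columns of $U$ and $V$.
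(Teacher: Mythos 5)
Your proposal is correct and follows essentially the same route as the paper's proof: the positivity of $C_1=U^T\overline U+V^T\overline V$ is exactly the paper's mechanism for forcing the first $d_0$ columns of $U$ and $V$ to vanish, the block-matching of $\mathbf Z$ against $\left(\begin{smallmatrix}O&-\Phi\\\Phi&O\end{smallmatrix}\right)$ reproduces the paper's derivation of $\kappa^0=0$ and the diagonal real $\Omega^0$ (the paper reaches $\kappa^0=0$ via the antisymmetry $\mathbf Z_{|\mathcal V}+\mathbf Z_{|\mathcal V}^\sharp=0$ rather than by direct block comparison, a cosmetic difference), and the ``Moreover'' clause is obtained from Lemma \ref{lem:scb}, Theorem \ref{prop:perif} and the invariance of $\widetilde V_0,\widetilde V_-$ exactly as in the paper. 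No gaps.
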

\begin{proof}
\textbf{1. is true for $\widetilde{\TT}$ under \eqref{eq:hypo1}.} $\widetilde{\mathbf{Z}}$ is in the form as in 1. due to Lemma \ref{lem:scb} and the fact that both $\widetilde{V}_0=V^Q\oplus V^P$ and $\widetilde{V}_-=W$ are $\widetilde{Z}$-invariant subspaces. Regarding $\widetilde{\mathbf{C}}$, Proposition \ref{prop:perif} implies that $\widetilde{\mathbf{C}}=\widetilde{\mathbf{C}}\widetilde{\mathbf{P}}_-$, hence
$$\widetilde{\mathbf{C}}\widetilde{\mathbf{P}}_-=\widetilde{\mathbf{C}}=\widetilde{\mathbf{C}}^*=\widetilde{\mathbf{P}}_-^*\widetilde{\mathbf{C}}^*= \widetilde{\mathbf{P}}_- \widetilde{\mathbf{C}}$$
as well.

\textbf{1. implies 2.} Since $V:=V^Q\oplus V^P \subseteq \ker{C}$, for $i=1,\dots, d_0$ one has
$$0=Cf_i=(U^T \overline{U}+V^T \overline{V})f_i +(U^T U+V^TV)f_i$$
and
$$0=Cif_i=i(U^T \overline{U}+V^T \overline{V})f_i -i(U^T U+V^TV)f_i.
$$
Since $V$ is closed under multiplication by $-i$, one has that $(U^T \overline{U}+V^T \overline{V})f_i=0$; moreover, 
$$(U^T \overline{U}+V^T \overline{V})f_i=0 \Leftrightarrow \overline{U}f_i=\overline{V}f_i=0 \Leftrightarrow Uf_i=Vf_i=0.$$
Consequently, $\overline{v}_{li}=u_{li}=0$ for every $l=1,\dots, m$ and $i=1,\dots, d_0$ and the statement about jump operators is proved.

Furthermore, one has for every
$i=1,\dots, d_0$
$$
Z f_i= i (\Omega f_i+\kappa f_i),\,  Z if_i= i (\Omega f_i-\kappa f_i) \in V.$$
Using again that $V$ is closed under multiplication for $-i$, it follows that
$$(\Omega f_i+\kappa f_i), \,(\Omega f_i-\kappa f_i) \in V$$
and $\Omega f_i$, $\kappa f_i$ are in $V$ as well, which implies $\Omega_{ij}=\kappa_{ij}=0$ for $i\leq d_0$ and $j>d_0$. Since $\Omega$ is Hermitian and $\kappa$ is symmetric, one gets that $\Omega_{ij}=\kappa_{ij}=0$ for $i> d_0$ and $j\leq d_0$ as well.

\bigskip Let us consider $\Omega^0:=(\Omega_{ij})_{i,j=1,\dots, d_0}$ and $\kappa^0:=(\kappa_{ij})_{i,j=1,\dots, d_0}$ and recall that $\Omega^{0\dagger}=\Omega^0$ and $\kappa^{0T}=\kappa^0$; this implies that
$$\Re(\Omega^0)=\Re(\Omega^0)^T, \, \Im(\Omega^0)=-\Im(\Omega^0)^T, \, \Re(\kappa)=\Re(\kappa)^T, \, \Im(\kappa)=\Im(\kappa)^T.
$$
Equation \eqref{eq:Z} implies that
$Zz=i\Omega^0 z + i \kappa^0 \overline{z}$ and, equivalently,
\[\begin{split} \mathbf{Z}_{|{\cal V}}=\begin{pmatrix} -\Im(\Omega^0)-\Im(\kappa^0) & \Re(\kappa^0)-\Re(\Omega^0) \\
\Re(\Omega^0)+\Re(\kappa^0) & -\Im(\Omega^0)+\Im(\kappa^0)\end{pmatrix} .\end{split}\]
Imposing that $\mathbf{Z}_{|{\cal V}}+\mathbf{Z}_{|{\cal V}}^\sharp=0$ one gets
$$\begin{pmatrix} -\Im(\Omega^0)-\Im(\kappa^0) & \Re(\kappa^0)-\Re(\Omega^0) \\
\Re(\Omega^0)+\Re(\kappa^0) & -\Im(\Omega^0)+\Im(\kappa^0)\end{pmatrix}=\begin{pmatrix} -\Im(\Omega^0)+\Im(\kappa^0) & -\Re(\kappa^0)-\Re(\Omega^0) \\
\Re(\Omega^0)-\Re(\kappa^0) & -\Im(\Omega^0)-\Im(\kappa^0)\end{pmatrix},$$
hence $\kappa^0=0$. Moreover, one has
\[\begin{split}
    \mathbf{Z}_{|{\cal V}}=\begin{pmatrix} -\Im(\Omega^0) & -\Re(\Omega^0) \\
\Re(\Omega^0) & -\Im(\Omega^0)\end{pmatrix}=\begin{pmatrix} O & -\Phi \\
\Phi  & O\end{pmatrix},
\end{split}\] Therefore, $\Omega^0=-\Phi$.

\textbf{2. implies 1.} This is a trivial check using Equations \eqref{eq:Z} and \eqref{eq:C}.

\end{proof}

The last result we are going to prove is a further constraint on $\widetilde{H}_0$.

\begin{lemma} \label{lem:transmodes}
If \eqref{eq:hypo1} is true, then for every $j=1,\dots, d_0$ one has that
\begin{equation}\label{eq:phizeta}\phi_j=0 \text{ implies that } \widetilde{\zeta}_j=0.\end{equation}

Therefore, there exists $w \in \CC^d$ such that $\widetilde{\TT}^{(w)}$ defined as in Eq. \eqref{eq:conjweyl} is such that $\widetilde{\zeta}^{(w)}_j=0$ for $j=1,\dots d_0$.
\end{lemma}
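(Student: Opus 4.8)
The plan is to prove the two halves in turn and to observe that the first (the implication $\phi_j=0\Rightarrow\widetilde\zeta_j=0$) is exactly what makes the displacement $w$ of the second half exist. For the implication I would first record that $\phi_j=0$ forces $f_j,\,if_j\in\ker(\widetilde Z)$: the previous lemma gives $\widetilde\kappa^0=0$ and $\widetilde\Omega^0=-\Phi$, so on $\widetilde V_0$ one has $\widetilde Zz=-i\Phi z$, and the vanishing of the $j$-th frequency annihilates the whole real plane $\mathrm{span}_\RR\{f_j,if_j\}$. Since $\widetilde V_0\subseteq\ker(\widetilde C)$ is $\widetilde Z$-invariant, it is contained in the subspace $\widetilde V$ of Proposition \ref{prop:prevNN}, so Eq. \eqref{eq:nnW} applies to $z=uf_j$ and $z=u\,if_j$; as $e^{s\widetilde Z}$ fixes these vectors, the evolution is a pure phase,
\[
\widetilde\TT_t(W(uf_j))=\exp\!\big(i\,u\,t\,\langle\widetilde\zeta,f_j\rangle_\RR\big)\,W(uf_j),\qquad u\in\RR,
\]
and analogously for $if_j$.

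I would then feed this into the invariance condition. Applying a normal invariant state $\omega_\infty$ gives $e^{iut\langle\widetilde\zeta,f_j\rangle_\RR}\,\hat\omega_\infty(uf_j)=\hat\omega_\infty(uf_j)$ for every $t\ge0$. If $\langle\widetilde\zeta,f_j\rangle_\RR\ne0$, then for each $u\ne0$ a suitable $t$ makes the phase different from $1$, forcing $\hat\omega_\infty(uf_j)=0$ for all $u\ne0$; this contradicts the continuity at the origin of the characteristic function $u\mapsto\hat\omega_\infty(uf_j)$ of the field operator $R(f_j)$, exactly the mechanism used in Lemma \ref{lem:nonex} and Theorem \ref{prop:perif}. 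Hence $\langle\widetilde\zeta,f_j\rangle_\RR=0$, and the same argument with $if_j$ yields $\langle\widetilde\zeta,if_j\rangle_\RR=0$; as $\{f_j,if_j\}$ is a real basis of $\mathrm{span}_\CC\{f_j\}$, these two identities are equivalent to $\widetilde\zeta_j=0$.

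For the second half I would look for $w$ inside $\widetilde V_0$. By Eq. \eqref{eq:conjweyl} one has $\widetilde\zeta^{(w)}=\widetilde\zeta-2\widetilde Z^\sharp w$, and Lemma \ref{lem:scb} gives $\widetilde Z^\sharp_{|\widetilde V_0}=-\widetilde Z_{|\widetilde V_0}$, i.e. $(\widetilde Z^\sharp w)_j=i\phi_j w_j$ for $j\le d_0$; moreover $\widetilde V_0\perp\widetilde V_-$ for $\langle\cdot,\cdot\rangle_\RR$ and both are $\widetilde Z^\sharp$-invariant, so a $w\in\widetilde V_0$ modifies only the first $d_0$ complex components of $\widetilde\zeta$. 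Cancelling them amounts to solving $2i\phi_j w_j=\widetilde\zeta_j$ for $j\le d_0$: for $\phi_j\ne0$ take $w_j=\widetilde\zeta_j/(2i\phi_j)$, while for $\phi_j=0$ the equation degenerates, but the first half forces $\widetilde\zeta_j=0$, so $w_j=0$ works. Extending $w$ by $0$ on $\widetilde V_-$ gives the desired displacement.

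The only genuine obstacle is the first half: it is the single place where hypothesis \eqref{eq:hypo1} enters, and it rests on the same continuity (quantum Riemann--Lebesgue) argument already used for the purely imaginary spectrum. Once it is in hand, the non-invertibility of $\widetilde Z^\sharp$ precisely along the $\phi_j=0$ directions---which would otherwise block the construction of $w$---is exactly compensated, and the second half collapses to an elementary linear solve.
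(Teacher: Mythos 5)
Your proposal is correct and follows essentially the same route as the paper: the implication $\phi_j=0\Rightarrow\widetilde\zeta_j=0$ is obtained by feeding the pure-phase evolution of $W(uz)$ (for $z$ fixed by $e^{tZ}$) into an invariant state and deriving a contradiction with the continuity of the characteristic function, and the displacement $w$ is built by inverting $\widetilde Z^\sharp$ on the modes with $\phi_j\neq0$ (the paper packages this as $w=\tfrac12 A\widetilde\zeta$ with $A$ built from the pseudoinverse of $\Phi$, which is exactly your componentwise solve). The only cosmetic difference is that you test the state against $f_j$ and $if_j$ separately rather than against the single vector $(\widetilde\zeta_1,0,\dots,0)$.
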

Lemma \ref{lem:transmodes} shows that $\widetilde{H}^{(w)}=\sum_{j=1}^{d_0}\phi_j a_j^\dagger a_j$ and $\widetilde{H}_0=W(w)^*\widetilde{H}_0^{(w)}W(w)$, therefore $\widetilde{H}_0$ admits an orthonormal basis of eigenvectors. 
\begin{proof}
By contradiction, suppose that the statement in Eq. \eqref{eq:phizeta} is false for some index $j$, which, without any loss of generality we can assume to be equal to $1$. Let us consider any normal invariant state $\omega_\infty$ for $\widetilde{\TT}$; for $z=(\widetilde{\zeta}^0_1,0,\dots,0)$, one has
$$\hat{\omega}(uz)=e^{itu}\hat{\omega}(uz),$$
which implies that $\hat{\omega}(uz) \in \{0,1\}$ and we get to a contradiction.

Let us define the following operator
$$\mathbf{A}=\begin{pmatrix} O & \Phi^{-1} & O \\
      \Phi^{-1} & O & O\\
      O & O & O \end{pmatrix},$$
    where the blocks correspond to the decomposition $V^Q$, $V^P$ and $W$ and $\Phi^{-1}$ is the pseudoinverse of $\Phi$. For $w=\frac{1}{2}A\tilde{\zeta}$, one can easily  that $\widetilde{\TT}^{(w)}$ has the claimed property.
\end{proof}

So far, we derived some necessary conditions on $Z$, $C$ and $\zeta$ (or, equivalently, on $H$ and $L_l$'s) for $\TT$ to admit a normal invariant state; on the other hand, it is easy to notice that they are also sufficient. Therefore, we are ready to state the main result of this section.

\begin{theo} \label{thm:main}
The following are equivalent:
\begin{enumerate}
\item $\TT$ admits a normal invariant state;
\item there exist an integer $1 \leq d_0 \leq d$, real numbers $\phi_1,\dots, \phi_{d_0}$ and a representative $\widetilde{\TT} \in [\TT]$ such that
$$\widetilde{\mathbf{Z}}=\begin{pmatrix} \widetilde{\mathbf{Z}}_0 & O\\
      O  & \widetilde{\mathbf{Z}}_- \end{pmatrix}, \quad \widetilde{\mathbf{C}}=\begin{pmatrix} O & O\\
     O & \widetilde{\mathbf{C}}_- \end{pmatrix}, \quad \widetilde{\zeta}=\begin{pmatrix} \widetilde{\zeta}_0 \\ \widetilde{\zeta}_- \end{pmatrix} $$
      where
      \begin{itemize}
          \item the blocks correspond to the splitting of $\CC^d$ given by
          $${\rm span}_{\RR}\{f_1,\dots, f_{d_0},if_1,\dots, if_{d_0}\}\oplus {\rm span}_{\RR}\{f_{d_0+1},\dots, f_{d}, if_{d_0+1}, \dots, if_d\},$$
          \item
          $$\widetilde{\mathbf{Z}}_0=\begin{pmatrix}O & -\Phi \\
      \Phi & O \end{pmatrix}$$ and
      $\Phi$ is the diagonal matrix with diagonal entries $\phi_1, \dots, \phi_{d_0}$,
          \item ${\rm Sp}(\widetilde{\mathbf{Z}}_-) \subseteq \{ \Re(z)<0\} $,
          \item $\widetilde{\zeta}_0 \in {\rm supp}(\widetilde{\mathbf{Z}}_0)$.
      \end{itemize}

      \item There exist an integer $1 \leq d_0 \leq d$, real numbers $\phi_1,\dots, \phi_{d_0}$ and a representative $\widetilde{\TT} \in [\TT]$ such that $\widetilde{H}$ and $\widetilde{L}_l$'s introduced in Eq. \eqref{eq:hami} are of the form $\widetilde{H}=\widetilde{H}_0+\widetilde{H}_-$, where
  \begin{align} 
  &\widetilde{H}_0:=\sum_{j=1}^{d_0} \phi_j a^\dagger_j a_j + \frac{1}{2}\sum_{j=1}^{d_0} \widetilde{\zeta}_j a_j^\dagger + \overline{\widetilde{\zeta}}_j a_j,\\ \label{eq:H0}
  &\widetilde{H}_-:=\sum_{k,j={d_0+1}}^{d} \left ( \widetilde{\Omega}_{jk} a^\dagger_j a_k + \frac{\widetilde{\kappa}_{jk}}{2}a_j^\dagger a_k^\dagger+\frac{\overline{\widetilde{\kappa}}_{jk}}{2}a_j a_k\right )+\frac{1}{2}\sum_{j=d_0+1}^{d} \widetilde{\zeta}_j a_j^\dagger + \overline{\widetilde{\zeta}}_j a_j,
  \end{align}
  and
  $$\widetilde{L}_l=\sum_{j=d_0+1}^{d} \overline{\widetilde{v}}_{lj}a_j + \widetilde{u}_{lj} a_j^\dagger.$$
  Moreover, $\widetilde{\zeta}_j\neq 0$ only if $\phi_j\neq 0$ and ${\rm Sp}(\mathbf{\widetilde{Z}}_-) \subset \{\Re(z)<0\}$, where
  $$\widetilde{Z}_-z=[(\widetilde{U}_-^T\overline{\widetilde{U}_-}-\widetilde{V}_-^T\overline{\widetilde{V}}_-)/2+i \widetilde{\Omega}_-]z+ [(\widetilde{U}_-^T\widetilde{V}-\widetilde{V}^T\widetilde{U}_-)/2+i \widetilde{\kappa}]\overline{z}$$
  and $\widetilde{U}_-=(\widetilde{u}_{lj})$, $\widetilde{V}=(\widetilde{v}_{lj})$ for $l=1,\dots, m$ and $j=d_0+1,\dots, d$.
  \end{enumerate}
\end{theo}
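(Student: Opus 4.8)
The plan is to prove the theorem by assembling the structural results already established in this section and supplying the single missing implication, namely sufficiency. I would organize the argument as the cycle $(1)\Rightarrow(2)$, $(2)\Leftrightarrow(3)$, $(2)\Rightarrow(1)$, which closes the loop since then $(3)\Rightarrow(2)\Rightarrow(1)$.

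The implication $(1)\Rightarrow(2)$ is essentially bookkeeping. Assuming \eqref{eq:hypo1}, Lemma \ref{lem:scb} together with the structural lemma proved just above already yields the block-diagonal form of $\widetilde{\mathbf{Z}}$ and the form of $\widetilde{\mathbf{C}}$ supported on the $\widetilde{V}_-$ block; the shape of $\widetilde{\mathbf{Z}}_0$ is exactly the antisymmetric $2\times 2$-block structure with diagonal $\Phi$. The scalar constraint $\widetilde{\zeta}_0 \in {\rm supp}(\widetilde{\mathbf{Z}}_0)$ is a restatement of Lemma \ref{lem:transmodes}, since ${\rm supp}(\widetilde{\mathbf{Z}}_0)$ consists precisely of the coordinates for which $\phi_j \neq 0$. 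The only genuinely new point is the spectral condition ${\rm Sp}(\widetilde{\mathbf{Z}}_-) \subseteq \{\Re(z) < 0\}$, which I would read off from Lemma \ref{lem:so}: by definition $\widetilde{V}_-$ is the subspace on which $e^{t\widetilde{Z}}$ decays, and (as shown in the proof of its item 3) its complexification is exactly the spectral subspace of $\widetilde{\mathbf{Z}}$ associated with $\{\Re(z)<0\}$; since $M$ is a similarity, the spectrum is unchanged.

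Next, the equivalence $(2)\Leftrightarrow(3)$ is the dictionary between the matrix parameters $(\mathbf{Z},\mathbf{C})$ and the physical parameters $(H, L_\ell)$ provided by Eqs. \eqref{eq:Z} and \eqref{eq:C}. This is precisely the equivalence $1\Leftrightarrow 2$ proved in the structural lemma above, applied to $\widetilde{\TT}$, with the extra scalar conditions ($\widetilde{\zeta}_j\neq 0$ only if $\phi_j\neq 0$, and stability of $\widetilde{Z}_-$) carrying over verbatim, so I would dispose of this step by citation.

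The substantive part is sufficiency, $(2)\Rightarrow(1)$. Here I would exploit the factorization $\FS = \Gamma(\widetilde{V}_0) \otimes \Gamma(\widetilde{V}_-)$: because $\widetilde{\mathbf{Z}}$ is block diagonal and $\widetilde{\mathbf{C}}$ is supported on $\widetilde{V}_-$, the action of $\widetilde{\TT}$ on a Weyl operator $W(z_0 + z_-)$ with $z_0\in \widetilde{V}_0$, $z_-\in \widetilde{V}_-$ splits, via Eq. \eqref{eq:aW}, into a product of an evolution on $\Gamma(\widetilde{V}_0)$ and one on $\Gamma(\widetilde{V}_-)$. On the $\widetilde{V}_-$ factor the generator is a stable GQMS, so by Theorem 9 in \cite{FP24} it admits a unique Gaussian normal invariant state $\rho_-$. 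On the $\widetilde{V}_0$ factor, after the Weyl conjugation furnished by Lemma \ref{lem:transmodes} the Hamiltonian reduces to $\sum_{j=1}^{d_0}\phi_j a_j^\dagger a_j$, whose induced $^*$-automorphism group fixes every state diagonal in the number basis, in particular the vacuum state $\omega_0$. I would then take $\omega_0 \otimes \rho_-$ as the candidate, verify invariance on Weyl operators using the product form, and transport back through the Weyl conjugation and the symplectic map $M$ to obtain a normal invariant state for $\TT$ itself. The main obstacle I anticipate is making this factorization of the dynamics fully rigorous rather than merely formal: one must check that the product functional $\omega_0\otimes\rho_-$ is genuinely $\widetilde{\TT}$-invariant, which relies on $\widetilde{V}_0$ and $\widetilde{V}_-$ being symplectically orthogonal (Lemma \ref{lem:so}, item 4) so that the two sub-dynamics commute and the characteristic function of $\omega_0\otimes\rho_-$ multiplies correctly across the tensor factors. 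Everything else follows directly from the lemmas already in hand.
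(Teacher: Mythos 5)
Your proposal is correct and follows essentially the same route as the paper: necessity is assembled from Lemma \ref{lem:scb}, the structural lemma, Lemma \ref{lem:transmodes} and Lemma \ref{lem:so}, the equivalence of the matrix and Hamiltonian formulations is the dictionary lemma, and sufficiency is the tensor-product construction $\omega_0\otimes\rho_-$ on $\Gamma(\widetilde{V}_0)\otimes\Gamma(\widetilde{V}_-)$. The paper dismisses sufficiency as "easy to notice," so your explicit verification of invariance on Weyl operators (using symplectic orthogonality of $\widetilde{V}_0$ and $\widetilde{V}_-$ and the Weyl conjugation removing $\widetilde{\zeta}_0$) is a welcome filling-in of that gap rather than a departure from the intended argument.
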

The previous result has an obvious consequence regarding quadratic Hamiltonians.
\begin{coro} \label{coro:quadhami}
A quadratic Hamiltonian admits a ground state if and only if it is unitarily equivalent (via displacement or Bogoliubov transformations) to
$$H=\sum_{j=1}^{d} \phi_j a_j^*a_j + \alpha \mathbb{1}, \quad \phi_1,\dots,\phi_{d_0} \in \RR_{\geq 0}, \alpha \in \RR.$$
\end{coro}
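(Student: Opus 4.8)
The plan is to read the corollary off Theorem \ref{thm:main} specialized to the purely Hamiltonian case. Given a quadratic Hamiltonian $H$ as in Eq. \eqref{eq:hami}, consider the GQMS $\TT$ it generates with no jump operators, so that $\mathbf{C}=0$ and $\TT_t(x)=e^{-itH}xe^{itH}$; recall that $\mathbf{C}=0$ is exactly the condition that $\mathbf{Z}$ belongs to the symplectic Lie algebra, i.e. $\mathbf{Z}^*\mathbf{J}+\mathbf{J}\mathbf{Z}=0$ (Eq. \eqref{eq:symp2}). The easy implication is immediate: if $\psi_0$ is a ground state of $H$, with $H\psi_0=E_0\psi_0$, then the density operator $\ket{\psi_0}\bra{\psi_0}$ is left invariant by the predual dynamics, so $\TT$ admits a normal invariant state and \eqref{eq:hypo1} holds.

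First I would use the symplectic structure to kill the stable part. Since $\mathbf{Z}$ lies in the symplectic Lie algebra, its spectrum is symmetric under $\lambda\mapsto-\lambda$; combined with \eqref{eq:hypo2}, namely ${\rm Sp}(\mathbf{Z})\subseteq\{\Re(z)\leq 0\}$, this forces ${\rm Sp}(\mathbf{Z})\subseteq i\RR$. In the language of Lemma \ref{lem:so} this means $V_-=\{0\}$ and $V_0=\CC^d$, i.e. $d_0=d$ and the stable block is absent. Theorem \ref{thm:main} then tells us that, after a Bogoliubov transformation $U(M)$, $H$ is unitarily equivalent to $\widetilde{H}_0=\sum_{j=1}^{d}\phi_j a_j^\dagger a_j+\tfrac12\sum_{j=1}^d(\widetilde{\zeta}_j a_j^\dagger+\overline{\widetilde{\zeta}}_j a_j)$ with $\widetilde{\zeta}_j\neq 0$ only when $\phi_j\neq 0$. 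By Lemma \ref{lem:transmodes} a further displacement $W(w)$ eliminates the linear terms by completing the square, producing an additive constant, so that $H$ is unitarily equivalent (via displacement and Bogoliubov transformations) to $\sum_{j=1}^d\phi_j a_j^\dagger a_j+\alpha\mathbb{1}$ for some $\alpha\in\RR$ and real $\phi_j$.

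It then remains to extract the sign condition. Unitary conjugation preserves the spectrum, hence the existence of a ground state, so $H$ has a ground state if and only if $\sum_{j=1}^d\phi_j a_j^\dagger a_j$ does. The latter operator is diagonal in the number basis with eigenvalues $\sum_{j=1}^d\phi_j n_j$, $n_j\in\nn$; its spectrum is bounded below, with minimum attained at the vacuum, precisely when every $\phi_j\geq 0$, and is unbounded below as soon as some $\phi_j<0$. This gives the normal form with $\phi_j\in\RR_{\geq 0}$ and, conversely, shows that any such normal form has the vacuum (pulled back by the unitary) as a ground state with energy $\alpha$, closing the equivalence. The step I expect to require the most care is the reduction $d_0=d$: one must check that the symplectic constraint on $\mathbf{Z}$ together with \eqref{eq:hypo2} genuinely excludes any contracting direction, so that no stable GQMS factor survives and the whole Hamiltonian is of harmonic-oscillator type; this follows from the $\lambda\mapsto-\lambda$ symmetry of the spectrum of elements of the symplectic Lie algebra.
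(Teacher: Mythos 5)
Your proposal is correct and follows exactly the route the paper intends: the corollary is stated there as an immediate consequence of Theorem \ref{thm:main}, and your argument fills in the details faithfully --- passing to the $^*$-automorphic GQMS with $\mathbf{C}=0$, using the $\lambda\mapsto-\lambda$ symmetry of the spectrum of elements of the symplectic Lie algebra together with \eqref{eq:hypo2} to force $V_-=\{0\}$ and $d_0=d$, and then reading off the sign condition $\phi_j\geq 0$ from boundedness below of $\sum_j\phi_j a_j^\dagger a_j$. The only point worth noting is that you correctly keep the additive constant $\alpha$ produced by the displacement, and you correctly separate the role of \eqref{eq:hypo1} (which only yields the normal form with real $\phi_j$) from the stronger ground-state requirement (which yields $\phi_j\geq 0$).
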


Following a completely different path, one could use Theorem 2.4 in \cite{De17} as well in order to show Corollary \ref{coro:quadhami} in the case $\zeta=0$.

\section{Set of invariant states and irreducibility} \label{sec:stris}

In this section we will make use of the structure theorem obtained in the previous section (Theorem \ref{thm:main}) in order to completely characterize the convex set of normal invariant states for a GQMS $\TT$; we recall that there is no loss in studying the set of normal invariant states of $\tilde{\TT}$ instead, since the two sets are unitarily equivalent. In this section we will make use of the notation in Theorem \ref{thm:main} without recalling it.

\bigskip Let us denote by $\rho_\infty$ the Gaussian state on $\Gamma(\widetilde{V}_-)\simeq \Gamma(\CC^{d-d_0})$ with the following mean and covariance:
\begin{equation} \label{eq:asypar}
m_\infty=\int_0^\infty e^{s \widetilde{Z}_-^\sharp}  \widetilde{\zeta}_- ds, \quad \Sigma_\infty= \int_0^\infty e^{s\widetilde{Z}_-^\sharp} \widetilde{C}_- e^{s \widetilde{Z}_-} ds.\end{equation}

In this section we will provide a complete description of the convex set of normal invariant states for GQMSs. This will allow us to derive some easy criteria in terms of $\mathbf{Z}$ and $\mathbf{C}$ to establish whether $\TT$ admits a faithful invariant state and if it is irreducible.

\begin{theo} \label{theo:invst}

If \eqref{eq:hypo1} holds, then there exists a Weyl operator acting only on the first $d_0$ modes $W:B(\Gamma(\CC^{d_0})) \rightarrow B(\Gamma(\CC^{d_0}))$ such that every normal invariant state for $\tilde{\TT}$ is of the form
$$W^*\omega W \otimes \rho_\infty$$
where $\omega$ is a state that commutes with $\hat{H}:=\sum_{j=1}^{d_0} \phi_j a_j^\dagger a_j$ and $\rho_\infty$ is the quantum Gaussian state on the modes $j=d_0+1,\dots, d$ with parameters defined in Eq. \eqref{eq:asypar}.
\end{theo}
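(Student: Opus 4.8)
The plan is to use the tensor factorization of $\widetilde{\TT}$ provided by Theorem~\ref{thm:main} together with the asymptotic formula of Lemma~\ref{lem:so}, reducing the problem to the two factors separately. By Theorem~\ref{thm:main}, both $\widetilde{\mathbf{Z}}$ and $\widetilde{\mathbf{C}}$ are block–diagonal along the symplectically orthogonal splitting $\CC^d=V_0\oplus V_-$, so in the identification $\FS=\Gamma(\widetilde{V}_0)\otimes\Gamma(\widetilde{V}_-)$ the semigroup splits as $\widetilde{\TT}_t=\widetilde{\TT}^0_t\otimes\widetilde{\TT}^-_t$, where (by the form of $\widetilde{H}$ and Proposition~\ref{prop:prevNN}) $\widetilde{\TT}^0_t(x)=e^{-it\widetilde{H}_0}x\,e^{it\widetilde{H}_0}$ on $\Gamma(\widetilde{V}_0)$, and $\widetilde{\TT}^-$ is a stable GQMS on $\Gamma(\widetilde{V}_-)$ with unique Gaussian invariant state $\rho_\infty$ of parameters \eqref{eq:asypar}.

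First I would fix an arbitrary normal invariant state $\omega_\infty$ and write $z=z_1+z_2$ with $z_1\in V_0$, $z_2\in V_-$. Since $\omega_\infty$ is normal it is continuous for the ${\rm w}^*$-topology, so applying it to the ${\rm w}^*$-limit of Lemma~\ref{lem:so} and using invariance to freeze the quantities $\omega_\infty(\widetilde{\TT}_t(W(z)))=\hat{\omega}_\infty(z)$ and $\omega_\infty(\widetilde{\TT}_t(W(z_1)))=\hat{\omega}_\infty(z_1)$ (both constant in $t$), I obtain
$$\hat{\omega}_\infty(z_1+z_2)=A(z_2,\infty)\,\hat{\omega}_\infty(z_1).$$
Here $A(z_2,\infty)$ is exactly the characteristic function $\hat{\rho}_\infty(z_2)$, because $\rho_\infty$ is by construction the $t\to+\infty$ limit of $\widetilde{\TT}^-_{*t}$ applied to any state and hence $\hat\rho_\infty(z_2)=\lim_t\widehat{\widetilde{\TT}^-_{*t}(\cdot)}(z_2)=A(z_2,\infty)$. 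Setting $z_1=0$ gives $\hat\omega_\infty(z_2)=\hat\rho_\infty(z_2)$, so the $\Gamma(\widetilde{V}_-)$–marginal of $\omega_\infty$ is $\rho_\infty$; writing $\tau$ for the $\Gamma(\widetilde{V}_0)$–marginal (of characteristic function $\hat\omega_\infty(z_1)$), the displayed identity reads $\hat\omega_\infty(z_1+z_2)=\hat\tau(z_1)\hat\rho_\infty(z_2)=\widehat{\tau\otimes\rho_\infty}(z_1+z_2)$. Since a normal state is uniquely determined by its characteristic function (\cite[Theorem 5.3.3]{Ho11}), I conclude $\omega_\infty=\tau\otimes\rho_\infty$.

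It remains to characterize $\tau$. As $\rho_\infty$ is $\widetilde{\TT}^-$-invariant, invariance of $\omega_\infty=\tau\otimes\rho_\infty$ under $\widetilde{\TT}_t=\widetilde{\TT}^0_t\otimes\widetilde{\TT}^-_t$ forces $\widetilde{\TT}^0_{*t}(\tau)=\tau$ for all $t$; since $\widetilde{\TT}^0_{*t}(\cdot)=e^{it\widetilde{H}_0}\cdot\,e^{-it\widetilde{H}_0}$, this means $\tau$ commutes with $e^{it\widetilde{H}_0}$ for every $t$, equivalently with $\widetilde{H}_0$. By Lemma~\ref{lem:transmodes} there is $w\in V_0$ with $W(w)\widetilde{H}_0W(w)^*=\hat{H}:=\sum_{j=1}^{d_0}\phi_j a_j^\dagger a_j$; putting $\omega:=W(w)\tau W(w)^*$, the operator $\omega$ commutes with $\hat{H}$ and $\tau=W(w)^*\omega W(w)$, giving $\omega_\infty=W(w)^*\omega W(w)\otimes\rho_\infty$ as claimed, with $W=W(w)$ a Weyl operator on the first $d_0$ modes.

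The step I expect to carry the whole argument is the second one: correctly pushing the normal state $\omega_\infty$ through the ${\rm w}^*$-limit of Lemma~\ref{lem:so} while using invariance to neutralize the oscillating phase produced by the purely Hamiltonian $V_0$-block, together with the clean identification $A(z_2,\infty)=\hat\rho_\infty(z_2)$. Once the factorization $\hat\omega_\infty(z_1+z_2)=\hat\tau(z_1)\hat\rho_\infty(z_2)$ is in hand, the passage to $\omega_\infty=\tau\otimes\rho_\infty$ and the commutation/conjugation computations for $\tau$ are essentially bookkeeping.
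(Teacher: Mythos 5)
Your proposal is correct and follows essentially the same route as the paper: both arguments pair the invariant state with the ${\rm w}^*$-limit of Lemma \ref{lem:so} to obtain the factorization $\hat\omega_\infty(z_1+z_2)=\hat\tau(z_1)\hat\rho_\infty(z_2)$, deduce $\omega_\infty=\tau\otimes\rho_\infty$ from uniqueness of characteristic functions, and then use invariance under the unitary factor together with Lemma \ref{lem:transmodes} to conjugate the $V_0$-marginal into a state commuting with $\hat H$. If anything, your write-up spells out the identification $A(z_2,\infty)=\hat\rho_\infty(z_2)$ and the commutation argument for $\tau$ more explicitly than the paper does.
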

\begin{proof}
Let us consider a normal invariant state $\omega_\infty$ for $\widetilde{\TT}$ and let us define $\omega=\tr_{\Gamma(\widetilde{V}_-)}(\omega_\infty)$; together with Theorem \ref{thm:main}, Lemma \ref{lem:so} shows that for every $z \in \CC^d$
$$\lim_{t \rightarrow +\infty}\hat{\omega}_\infty(z)-\hat{\omega}_t(z_1)\hat{\rho}_\infty(z_2)=0,$$
where
$$\omega_t=e^{it\widetilde{H}_0}\omega e^{-it\widetilde{H}_0}. $$
Therefore $\omega_\infty=\omega \otimes \rho_\infty$ and $\omega$ is a state that commutes with $\widetilde{H}_0=W\hat{H} W^*$ (Lemma \ref{lem:transmodes}), or, equivalently, $W^* \omega W$ commutes with $\hat{H}$ and we are done.
\end{proof}

Notice that $\TT$ fails to have a faithful invariant state if and only if $\rho_\infty$ is not faithful. Thanks to the characterization of faithful Gaussian invariant states in terms of their covariance matrix (see Theorem 4 in \cite{Pa12}) we have the following result.

\begin{coro} \label{coro:faithful}
    If $\eqref{eq:hypo1}$ holds, then $\TT$ admits a faithful normal invariant state if and only if
    $$\mathbf{\Sigma_\infty} +i \mathbf{J}_- >0,$$
where $\mathbf{J}_-$ is the symplectic matrix restricted to $\widetilde{V}_-$.
\end{coro}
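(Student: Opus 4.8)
The plan is to read faithfulness directly off the explicit product structure of the invariant states provided by Theorem \ref{theo:invst}, and then to translate the faithfulness of the remaining Gaussian factor into a condition on its covariance via the known criterion for Gaussian states. Since faithfulness is preserved under the unitary equivalence $\TT \sim \widetilde{\TT}$, I would work throughout with $\widetilde{\TT}$: a normal state is faithful for $\TT$ exactly when the corresponding conjugated state is faithful for $\widetilde{\TT}$, so nothing is lost in the reduction.

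First I would invoke Theorem \ref{theo:invst}, by which every normal invariant state of $\widetilde{\TT}$ has the product form $W^*\omega W \otimes \rho_\infty$ on $\Gamma(\CC^{d_0}) \otimes \Gamma(\widetilde{V}_-)$, with $\omega$ any state commuting with $\hat{H}=\sum_{j=1}^{d_0}\phi_j a_j^\dagger a_j$ and $\rho_\infty$ fixed. The support of a tensor product of density operators is the tensor product of the supports, hence such a state is faithful if and only if both $W^*\omega W$ and $\rho_\infty$ are faithful; conjugation by the Weyl unitary $W$ preserves faithfulness, so the first factor is faithful precisely when $\omega$ is. Because $\rho_\infty$ is the same for every invariant state, the existence of a faithful normal invariant state is equivalent to $\rho_\infty$ being faithful together with the existence of at least one faithful $\omega$ commuting with $\hat{H}$.

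The latter is always available, which is the only point needing a small argument: the Gibbs state $\omega_0 = e^{-\sum_{j=1}^{d_0} a_j^\dagger a_j}/\tr(e^{-\sum_{j=1}^{d_0} a_j^\dagger a_j})$ is trace class (its eigenvalues $e^{-\sum_j n_j}$ are summable over $\nn^{d_0}$), strictly positive hence faithful, and commutes with $\hat{H}$ since it is a function of the number operators $a_j^\dagger a_j$. As the oscillator dynamics is the Hamiltonian flow generated by $\hat{H}$, the state $W^*\omega_0 W \otimes \rho_\infty$ is indeed a normal invariant state of $\widetilde{\TT}$, faithful iff $\rho_\infty$ is. This reproduces the remark preceding the statement: $\TT$ admits a faithful normal invariant state if and only if $\rho_\infty$ is faithful.

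It then remains to characterize faithfulness of the Gaussian state $\rho_\infty$. Its mean $m_\infty$ can be removed by a displacement, which is unitary and therefore faithfulness-preserving, so faithfulness depends only on the covariance $\Sigma_\infty$; applying the characterization of faithful Gaussian states in terms of the covariance matrix (Theorem 4 in \cite{Pa12}) gives that $\rho_\infty$ is faithful exactly when $\mathbf{\Sigma_\infty}+i\mathbf{J}_- > 0$, which is the claim. I expect no genuine obstacle: the argument is a bookkeeping reduction to Theorem \ref{theo:invst} and the cited Gaussian criterion, the only elementary care being the exhibition of a faithful invariant state on the oscillator factor and the observation that the mean plays no role in faithfulness.
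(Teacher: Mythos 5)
Your proposal is correct and follows essentially the same route as the paper, which disposes of this corollary in the remark preceding it: reduce via Theorem \ref{theo:invst} to the faithfulness of the fixed Gaussian factor $\rho_\infty$ (the oscillator factor always admitting a faithful invariant state, e.g.\ a Gibbs state of the number operators), and then apply the covariance criterion of Theorem 4 in \cite{Pa12}. Your write-up merely makes explicit the bookkeeping (supports of tensor products, irrelevance of the mean) that the paper leaves implicit.
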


Another useful consequence is an easy criterion for the irreducibility of $\TT$. The notion of irreducibility for quantum evolutions traces back to \cite{Da70} and equivalent characterizations and detailed discussions about it can be found in Section 3 of \cite{CP16} or Proposition 5.1 in \cite{FR06}. We recall that $\TT$ is said to be irreducible if there are not nontrivial subharmonic projections, i.e. orthogonal projections $p$ such that
$$\TT_t(p) \geq p, \quad \forall t \geq 0.$$

\begin{coro} \label{coro:irr}
If \eqref{eq:hypo1} is true, then $\TT$ is irreducible if and only if
\begin{equation} \label{eq:irrcond}
\text{there are not nontrivial $\mathbf{Z}$-invariant subspaces in }\ker(\mathbf{C}_Z),
\end{equation}
where $\mathbf{C}_Z:=\mathbf{C}+i(\mathbf{Z}^T\mathbf{J}+\mathbf{J}\mathbf{Z})$.
\end{coro}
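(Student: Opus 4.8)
The plan is to reduce irreducibility to two structural facts coming from Theorem \ref{thm:main}: the absence of genuine harmonic-oscillator modes ($d_0=0$) and the faithfulness of the Gaussian state $\rho_\infty$ on the stable part. Since irreducibility, assumption \eqref{eq:hypo1} and the matrix condition \eqref{eq:irrcond} are all invariant under the equivalence $\TT\sim\widetilde{\TT}$, I would work throughout with the representative $\widetilde{\TT}$. Invariance of \eqref{eq:irrcond} is checked directly: from $\mathbf{C}^M=\mathbf{M}^{-\sharp}\mathbf{C}\mathbf{M}^{-1}$ and $\mathbf{M}^*\mathbf{J}\mathbf{M}=\mathbf{J}$ one gets $\mathbf{C}_Z^M=\mathbf{M}^{-\sharp}\mathbf{C}_Z\mathbf{M}^{-1}$, so $\ker\mathbf{C}_Z^M=\mathbf{M}(\ker\mathbf{C}_Z)$ while $\mathbf{Z}^M$-invariant subspaces are the $\mathbf{M}$-images of the $\mathbf{Z}$-invariant ones. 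Recall that for $\widetilde{\TT}$ the dynamics factorizes as $\widetilde{\TT}=\widetilde{\TT}^0\otimes\widetilde{\TT}^-$, with $\widetilde{\TT}^0$ the purely Hamiltonian evolution $\widetilde{\TT}^0_t(x)=e^{-it\widetilde{H}_0}xe^{it\widetilde{H}_0}$ on $\Gamma(\widetilde V_0)$ and $\widetilde{\TT}^-$ a stable GQMS on $\Gamma(\widetilde V_-)$.

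First I would dispose of the case $d_0\ge 1$. Here $\widetilde{H}_0$ is diagonalizable (Lemma \ref{lem:transmodes}), and any nontrivial spectral projection $q$ of $\widetilde{H}_0$ commutes with $\widetilde{H}_0$, so $\widetilde{\TT}^0_t(q)=q$; hence $q\otimes\mathbb{1}$ is a nontrivial harmonic, a fortiori subharmonic, projection and $\TT$ is not irreducible. On the matrix side $\widetilde V_0\subseteq\ker\widetilde{\mathbf{C}}_Z$: indeed $\widetilde V_0\subseteq\ker\widetilde{\mathbf{C}}$ by Theorem \ref{prop:perif}, while the explicit block form of $\widetilde{\mathbf{Z}}_0$ in Lemma \ref{lem:scb} gives $\widetilde{\mathbf{Z}}_0^\sharp\mathbf{J}_0+\mathbf{J}_0\widetilde{\mathbf{Z}}_0=0$. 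As $\widetilde V_0$ is nonzero and $\widetilde{\mathbf{Z}}$-invariant, \eqref{eq:irrcond} fails too, so both sides of the equivalence are false in this regime.

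It then remains to treat $d_0=0$, i.e. the stable case $\widetilde{\mathbf{Z}}=\widetilde{\mathbf{Z}}_-$, where $\rho_\infty$ is the unique normal invariant state (Theorem 9 in \cite{FP24}). Irreducibility is equivalent to faithfulness of $\rho_\infty$: if $\rho_\infty$ is not faithful, its support projection is a nontrivial subharmonic projection; if it is faithful, invariance forces every subharmonic projection $p$ (for which $\widetilde{\TT}_t(p)-p\ge 0$ has vanishing expectation in $\rho_\infty$) to be harmonic, and triviality of the fixed-point algebra for a unique faithful invariant state (\cite{FR06,CP16}) leaves only $p\in\{0,\mathbb{1}\}$. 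By Corollary \ref{coro:faithful}, faithfulness means $\mathbf{\Sigma}_\infty+i\mathbf{J}_->0$, and the technical heart is to recast this as \eqref{eq:irrcond}. Differentiating $e^{s\widetilde{\mathbf{Z}}_-^\sharp}(i\mathbf{J}_-)e^{s\widetilde{\mathbf{Z}}_-}$ and integrating, stability gives $i\mathbf{J}_-=-\int_0^\infty e^{s\widetilde{\mathbf{Z}}_-^\sharp}\,i(\widetilde{\mathbf{Z}}_-^\sharp\mathbf{J}_-+\mathbf{J}_-\widetilde{\mathbf{Z}}_-)\,e^{s\widetilde{\mathbf{Z}}_-}\,ds$, so that with \eqref{eq:asypar}
\[
\mathbf{\Sigma}_\infty+i\mathbf{J}_-=\int_0^\infty e^{s\widetilde{\mathbf{Z}}_-^\sharp}\,\overline{\widetilde{\mathbf{C}}_Z^-}\,e^{s\widetilde{\mathbf{Z}}_-}\,ds,\qquad \overline{\widetilde{\mathbf{C}}_Z^-}=\widetilde{\mathbf{C}}_- - i(\widetilde{\mathbf{Z}}_-^\sharp\mathbf{J}_-+\mathbf{J}_-\widetilde{\mathbf{Z}}_-)\ge 0.
\]
This representation shows that $\mathbf{\Sigma}_\infty+i\mathbf{J}_->0$ fails exactly when some nonzero $v$ satisfies $e^{s\widetilde{\mathbf{Z}}_-}v\in\ker\overline{\widetilde{\mathbf{C}}_Z^-}$ for all $s$, i.e. when there is a nonzero $\widetilde{\mathbf{Z}}_-$-invariant subspace in $\ker\overline{\widetilde{\mathbf{C}}_Z^-}$; conjugating (all matrices are real) this is equivalent to a nonzero $\widetilde{\mathbf{Z}}_-$-invariant subspace in $\ker\widetilde{\mathbf{C}}_Z^-$.

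Finally I would glue the two regimes. Since $\widetilde V_0$ and $\widetilde V_-$ are symplectically orthogonal (Lemma \ref{lem:so}) and $\widetilde{\mathbf{Z}}$-invariant with disjoint spectra ($i\RR$ versus $\{\Re<0\}$), both $\widetilde{\mathbf{Z}}$ and $\mathbf{J}$ are block diagonal, whence $\widetilde{\mathbf{C}}_Z=0\oplus\widetilde{\mathbf{C}}_Z^-$ and every $\widetilde{\mathbf{Z}}$-invariant subspace splits along $\mathcal{W}_0\oplus\mathcal{W}_-$. Thus the largest $\widetilde{\mathbf{Z}}$-invariant subspace inside $\ker\widetilde{\mathbf{C}}_Z$ is $\mathcal{W}_0$ together with the largest $\widetilde{\mathbf{Z}}_-$-invariant subspace inside $\ker\widetilde{\mathbf{C}}_Z^-$, so \eqref{eq:irrcond} holds if and only if $d_0=0$ and $\rho_\infty$ is faithful, which is precisely irreducibility. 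I expect the Lyapunov integral identity for $\mathbf{\Sigma}_\infty+i\mathbf{J}_-$ and the attendant controllability argument to be the main obstacle, together with the bookkeeping ensuring that nothing is lost in passing between $\TT$ and $\widetilde{\TT}$.
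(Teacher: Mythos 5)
Your proof is correct and follows essentially the same route as the paper: both reduce irreducibility to the existence of a unique faithful normal invariant state, invoke Corollary \ref{coro:faithful}, and use the Lyapunov identity for $i\mathbf{J}$ to rewrite $\mathbf{\Sigma}_\infty+i\mathbf{J}>0$ as strict positivity of $\int_0^{+\infty}e^{s\mathbf{Z}^T}\mathbf{C}_Z e^{s\mathbf{Z}}\,ds$, which fails exactly when $\ker(\mathbf{C}_Z)$ contains a nontrivial $\mathbf{Z}$-invariant subspace. Your explicit case split on $d_0$ corresponds to the paper's observation that \eqref{eq:irrcond} forces ${\cal V}_0$ to be trivial, and your attention to the conjugate $\overline{\mathbf{C}_Z}$ appearing in the integrand is a sign detail the paper glosses over.
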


As one should expect, condition in Eq. \eqref{eq:irrcond} does not change if we conjugate the semigroup via a metaplectic transformation corresponding to the symplectic transformation $M$: indeed, we recall that
$Z^M=MZM^{-1}$ and $C_Z^M=M^{-\sharp}C_Z M^{-1}$; therefore, $V$ is a $Z$-invariant subspace in $\ker(C_Z)$ if and only if $M(V)$ is a $Z^M$-invariant subspace in $\ker(C^M_Z)$.

\begin{proof}
    Under \eqref{eq:hypo1}, we have the following chain of equivalences:
    \[\begin{split}
        &\TT \text{ is irreducible} \Leftrightarrow \text{ it admits a unique faithful normal invariant state} \Leftrightarrow \\
        & Z \text{ is stable and }\mathbf{\Sigma_\infty}+i\mathbf{J} >0 \Leftrightarrow Z \text{ is stable and } \int_0^{+\infty} e^{s\mathbf{Z}^T}\mathbf{C}e^{s\mathbf{Z}}ds+i\mathbf{J} >0\Leftrightarrow \\
        & Z \text{ is stable and } \int_0^{+\infty} e^{s\mathbf{Z}^T}\mathbf{C}_Ze^{s\mathbf{Z}}ds >0. \qquad (*)
    \end{split}\]
    The last term in the previous chain of equivalent statements is also equivalent to the statement. Indeed, assume that $(*)$ is true and, by contradiction, that there is a nontrivial $\mathbf{Z}$-invariant subspace ${\cal W}$ in $\ker(\mathbf{C}_Z)$, then for every $w \in {\cal W}$ one has that
    $$\int_0^{+\infty} e^{s\mathbf{Z}^T}\mathbf{C}_Ze^{s\mathbf{Z}}wds=0,$$
    which contradicts the fact that $$\int_0^{+\infty} e^{s\mathbf{Z}^T}\mathbf{C}_Ze^{s\mathbf{Z}}ds>0.$$
    On the other hand, let us assume that the statement in Eq. \eqref{eq:irrcond} is true; then ${\cal V}_0$ must be trivial, since it is $\mathbf{Z}$-invariant, $(\mathbf{Z}^T\mathbf{J}+\mathbf{J}\mathbf{Z})_|{{\cal V}_0}=\mathbf{0}$ and ${\cal V}_0 \subseteq \ker{\mathbf{C}}$, hence for every $w \in {\cal V}_0$
    $$\mathbf{C}_Zw = \mathbf{C}w=0.$$
    Therefore $\mathbf{Z}$ is stable, $\int_0^{+\infty} e^{s\mathbf{Z}^T}\mathbf{C}_Ze^{s\mathbf{Z}}ds$ is well defined and it is strictly positive due to \eqref{eq:irrcond}.
\end{proof}

\bigskip We recall that the equivalent condition to irreducibility given in Eq. \eqref{eq:irrcond} has a counterpart for classical systems as well (see Section \ref{sec:cOU}).

The existence of a unique faithful normal invariant state for the semigroup (or equivalently the existence of a normal invariant state and irreducibility of the semigroup) is necessary in order to talk about its spectral gap and it is natural to wonder whether this is also sufficient to ensure that the spectral gap is strictly positive. This is not the case for the GNS embedding, since for having a strictly positive spectral gap one needs $\mathbf{C}_Z>0$ and the example in Sections 6 and 7.1 in \cite{FPSU24} shows that this is not implied by the existence of a unique faithful normal invariant state.

On the other hand, in the case of the KMS embedding, the condition is the following: let us consider the symplectic diagonalization of $\mathbf{\Sigma}_\infty=\mathbf{M}^TD \mathbf{M}$ (Theorem 2 in \cite{Pa12}), then the semigroup has a non-zero KMS spectral gap if and only if
\begin{equation} \label{eq:KMSgap}
-(\mathbf{Z}^T\mathbf{M}^Tf(D) \mathbf{M}+\mathbf{M}^Tf(D) \mathbf{M}\mathbf{Z})>0,\end{equation}
where $f(x)={\rm csch}\, {\rm cotgh}^{-1}(x)$ (see Theorem 26 in \cite{FPSU24}). 
Notice that in general the condition in Eq. \eqref{eq:KMSgap} is stronger than the one for the unique Gaussian state to be faithful, i.e. $\Sigma_\infty+i\mathbf{J}>0$: indeed, \[\begin{split}
    \Sigma_\infty+i\mathbf{J}>0 \Leftrightarrow D + i\mathbf{J}>0 \Leftrightarrow D > \mathbf{1} \Leftrightarrow f(D)>0 \Leftrightarrow \mathbf{M}^{T}f(D)\mathbf{M}>0
\end{split}
\]
and, if Eq. \eqref{eq:KMSgap} holds true, then
$$-\int_0^{+\infty} e^{t\mathbf{Z}^T} (\mathbf{Z}^T\mathbf{M}^Tf(D) \mathbf{M}+\mathbf{M}^Tf(D) \mathbf{M}\mathbf{Z})e^{t \mathbf{Z}} dt =\mathbf{M}^{T}f(D)\mathbf{M}>0$$
as well. The converse is true in one mode: in this case the existence of a unique faithful normal invariant state implies that $D=\sigma \mathbf{1}$ for some $\sigma \in (1,+\infty)$ and $\mathbf{C}>0$ (Lemma 2 and Theorem 8 in \cite{AFP21}); from
$$\Sigma_\infty=\int_0^{+\infty}e^{t \mathbf{Z}^T}Ce^{t \mathbf{Z}}dt=\sigma \mathbf{M}^T\mathbf{M},$$
one has that $\mathbf{Z}^T\mathbf{M}^T\mathbf{M}+ \mathbf{M}^T\mathbf{M}\mathbf{Z}=\mathbf{C}/\sigma$, therefore $\mathbf{Z}^T\mathbf{M}^Tf(D) \mathbf{M}+\mathbf{M}^Tf(D) \mathbf{M}\mathbf{Z}=f(\sigma)/\sigma \mathbf{C}>0.$ We can resume what we just observed in the following lemma.
\begin{lemma}
If $d=1$ and $\TT$ admits a unique faithful invariant state, then the KMS spectral gap is always strictly positive.
\end{lemma}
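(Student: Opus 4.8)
The plan is to reduce everything to the stable one-mode situation and then exploit the drastic collapse that occurs when a single symplectic eigenvalue is present. First I would invoke the chain of equivalences established in the proof of Corollary \ref{coro:irr}: the existence of a unique faithful normal invariant state forces $\mathbf{Z}$ to be stable and $\mathbf{\Sigma}_\infty + i\mathbf{J} > 0$, so that $\mathbf{\Sigma}_\infty$ is the unique solution of the Lyapunov equation $\mathbf{Z}^T\mathbf{\Sigma}_\infty + \mathbf{\Sigma}_\infty\mathbf{Z} = -\mathbf{C}$ arising from the fixed-point condition in Eq. \eqref{eq:par}. In one mode the symplectic diagonalization $\mathbf{\Sigma}_\infty = \mathbf{M}^T D \mathbf{M}$ has a single symplectic eigenvalue, so $D = \sigma\mathbf{1}$ for some $\sigma > 0$; the faithfulness condition $\mathbf{\Sigma}_\infty + i\mathbf{J} > 0$ is then equivalent to $\sigma > 1$, and Lemma 2 together with Theorem 8 in \cite{AFP21} additionally give $\mathbf{C} > 0$.

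The decisive observation is that, because $D$ is a scalar multiple of the identity, so is $f(D) = f(\sigma)\mathbf{1}$, where $f(x) = {\rm csch}\,{\rm cotgh}^{-1}(x)$. Hence $\mathbf{M}^T f(D)\mathbf{M} = f(\sigma)\mathbf{M}^T\mathbf{M} = (f(\sigma)/\sigma)\mathbf{\Sigma}_\infty$, and the KMS-gap expression collapses back onto the Lyapunov equation:
\[
-\left(\mathbf{Z}^T\mathbf{M}^T f(D)\mathbf{M} + \mathbf{M}^T f(D)\mathbf{M}\,\mathbf{Z}\right) = -\frac{f(\sigma)}{\sigma}\left(\mathbf{Z}^T\mathbf{\Sigma}_\infty + \mathbf{\Sigma}_\infty\mathbf{Z}\right) = \frac{f(\sigma)}{\sigma}\,\mathbf{C}.
\]
Since $\sigma > 1$ gives $f(\sigma) > 0$ (monotonicity of $f$, equivalently $D > \mathbf{1} \Leftrightarrow f(D) > 0$) and $\mathbf{C} > 0$, the right-hand side is strictly positive, which is exactly condition Eq. \eqref{eq:KMSgap}. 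This establishes that the KMS spectral gap is nonzero.

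The only genuinely delicate point is the reduction to a scalar $f(D)$, and this is precisely where the hypothesis $d = 1$ enters: for $d \geq 2$ the symplectic eigenvalues of $\mathbf{\Sigma}_\infty$ may differ, $f(D)$ is then a nontrivial diagonal matrix, and $\mathbf{M}^T f(D)\mathbf{M}$ is no longer proportional to $\mathbf{\Sigma}_\infty$, so the cancellation against the Lyapunov equation breaks down and one cannot deduce positivity from $\mathbf{C} > 0$ alone (indeed the converse implication fails in general, as already noted in the discussion preceding the statement). I would therefore present the scalar collapse of $f(D)$ as the crux of the argument and keep everything else as the short algebraic verification displayed above.
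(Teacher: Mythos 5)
Your argument is correct and is essentially the paper's own proof: in one mode the Williamson form forces $D=\sigma\mathbf{1}$ with $\sigma>1$ and $\mathbf{C}>0$ (via Lemma 2 and Theorem 8 of \cite{AFP21}), so $\mathbf{M}^Tf(D)\mathbf{M}$ is proportional to $\mathbf{\Sigma}_\infty$ and the Lyapunov equation turns the KMS-gap condition \eqref{eq:KMSgap} into $f(\sigma)\mathbf{C}/\sigma>0$. Your write-up is in fact slightly cleaner on the signs in the Lyapunov identity, but there is no substantive difference in approach.
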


It remains an interesting open problem to understand what happens in the multi-mode scenario.

\section{Long-time behavior and Environment-Induced Decoherence} \label{sec:EID}

In this section we will provide a complete description of the evolution for long times; for the sake of a simpler notation, we will study $\widetilde{\TT}$, however everything translates in terms of $\TT$ in a trivial way. Theorem \ref{thm:main} shows that the algebra of all bounded linear operators is spatially isomorphic to the product of two factors
$$B(\Gamma(\widetilde{V}_0)) \overline{\otimes} B(\Gamma(\widetilde{V}_-))$$
such that the evolution is unitary on the first factor (which does not feel any interaction with the environment), while it is dissipative and ergodic on the second factor, in the sense that
for every $x \in B(\Gamma(\widetilde{V}_-))$, one has
$${\rm w}^*\text{-}\lim_{t \rightarrow +\infty} \widetilde{\TT}_t(\mathbf{1} \otimes x)=\rho_\infty(x) \mathbf{1}.$$
Moreover, there is no interaction between the two dynamics. In this section we will show that something stronger holds true: no matter what is the initial observable, the evolution will approach for long times a unitary evolution of another observable belonging to $B(\Gamma(\widetilde{V}_0))\otimes \mathbf{1}$ and depending on the initial one. This is what is known as environment-induced decoherence (see \cite{BO03}).

\bigskip Before proving the main results, let us define the following normal quantum conditional expectation:
\begin{align*}{\cal E}:B(\FS) &\rightarrow B(\Gamma(\widetilde{V}_0)) \otimes \mathbf{1}\\
x & \mapsto \mathbb{E}_{\rho_\infty}[x]\otimes \mathbf{1},\end{align*}
where $\mathbb{E}_{\rho_\infty}$ is the unique bounded linear operator that satisfies
$$\tr(\omega \mathbb{E}_{\rho_\infty}(x))=\tr(\omega \otimes \rho_{\infty} x), \quad \forall \omega \in L^1(\Gamma(\widetilde{V}_0)), x \in B(\FS).$$
Notice that the predual of ${\cal E}$ is given by
\begin{align*}{\cal E}_*:L^1(\FS) &\rightarrow L^1(\Gamma(\widetilde{V}_0)) \otimes \rho_\infty\\
\omega & \mapsto \tr_{B(\Gamma(\widetilde{V}_-))}(\omega)\otimes \rho_\infty.\end{align*}

\begin{theo} \label{thm:eid}
Let us assume that \eqref{eq:hypo1} holds true, then one has that for every $x \in B(\FS)$:
\begin{equation} \label{eq:eid}
{\rm w}^*\text{-}\lim_{t \rightarrow +\infty} \widetilde{\TT}_t(x-{\cal E}(x))=0.
\end{equation}
Moreover, for every state $\rho$ one has

$$\lim_{t \rightarrow +\infty} \| {\widetilde{\TT}}_{t*}(\rho-{\cal E}_*(\rho))\|_1=0.$$
\end{theo}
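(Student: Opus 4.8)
The plan is to reduce both assertions to the single trace-norm estimate
\begin{equation}
\bigl\|(\mathrm{id}\otimes\widetilde{\TT}^-_{t*})(\omega)-\tr_{\Gamma(\widetilde{V}_-)}(\omega)\otimes\rho_\infty\bigr\|_1\xrightarrow[t\to+\infty]{}0,\qquad\forall\,\omega\in L^1(\FS),\tag{$\star$}
\end{equation}
where I write $\widetilde{\TT}_t=\widetilde{\TT}^0_t\otimes\widetilde{\TT}^-_t$ for the factorization of the dynamics provided by Theorem \ref{thm:main}: here $\widetilde{\TT}^0_t$ acts on $B(\Gamma(\widetilde{V}_0))$ as the group of $^*$-automorphisms generated by $\widetilde{H}_0$, and $\widetilde{\TT}^-_t$ is the stable GQMS on $B(\Gamma(\widetilde{V}_-))$ whose unique normal invariant state is the Gaussian state $\rho_\infty$ of Eq. \eqref{eq:asypar}. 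This factorization is immediate on Weyl operators from Eq. \eqref{eq:aW} and the block structure of $\widetilde{\mathbf{Z}},\widetilde{\mathbf{C}},\widetilde{\zeta}$, and extends to all of $B(\FS)$ by ${\rm w}^*$-density.

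To prove $(\star)$ I would first record that, since ${\rm Sp}(\widetilde{\mathbf{Z}}_-)\subseteq\{\Re(z)<0\}$, Theorem 9 in \cite{FP24} gives $\|\widetilde{\TT}^-_{t*}(\sigma)-\rho_\infty\|_1\to0$ for every state $\sigma$ on $\Gamma(\widetilde{V}_-)$; splitting an arbitrary $\sigma\in L^1(\Gamma(\widetilde{V}_-))$ into its Hermitian and anti-Hermitian parts, and each of those into positive and negative parts, upgrades this to $\|\widetilde{\TT}^-_{t*}(\sigma)-\tr(\sigma)\rho_\infty\|_1\to0$. Next I would check $(\star)$ on elementary tensors $\omega=\eta\otimes\sigma$, where it reads $\|\eta\|_1\,\|\widetilde{\TT}^-_{t*}(\sigma)-\tr(\sigma)\rho_\infty\|_1\to0$, and finally pass to general $\omega$ by an $\varepsilon/3$ argument: finite sums of elementary tensors are $\|\cdot\|_1$-dense in $L^1(\FS)$, the map $\mathrm{id}\otimes\widetilde{\TT}^-_{t*}$ is a contraction, and $\omega\mapsto\tr_{\Gamma(\widetilde{V}_-)}(\omega)\otimes\rho_\infty$ is bounded because the partial trace is $\|\cdot\|_1$-contractive.

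The trace-norm statement then follows at once. Writing $\rho_0:=\tr_{\Gamma(\widetilde{V}_-)}(\rho)$ and using $\widetilde{\TT}^-_{t*}(\rho_\infty)=\rho_\infty$, I factor out the isometry $\widetilde{\TT}^0_{t*}\otimes\mathrm{id}$ (conjugation by a unitary, hence a $\|\cdot\|_1$-isometry) from
\[
\widetilde{\TT}_{t*}(\rho-{\cal E}_*(\rho))=(\widetilde{\TT}^0_{t*}\otimes\mathrm{id})\bigl[(\mathrm{id}\otimes\widetilde{\TT}^-_{t*})(\rho)-\rho_0\otimes\rho_\infty\bigr],
\]
so that $\|\widetilde{\TT}_{t*}(\rho-{\cal E}_*(\rho))\|_1=\|(\mathrm{id}\otimes\widetilde{\TT}^-_{t*})(\rho)-\rho_0\otimes\rho_\infty\|_1$, which tends to $0$ by $(\star)$ with $\omega=\rho$.

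For the ${\rm w}^*$-statement, fix $x\in B(\FS)$, set $y:=x-{\cal E}(x)$, and test against an arbitrary $\omega\in L^1(\FS)$. With $\omega_t:=(\mathrm{id}\otimes\widetilde{\TT}^-_{t*})(\omega)$ I would write $\tr(\omega\,\widetilde{\TT}_t(y))=\tr\bigl((\widetilde{\TT}^0_{t*}\otimes\mathrm{id})(\omega_t)\,y\bigr)$ and, using $(\star)$ and the isometry $\widetilde{\TT}^0_{t*}\otimes\mathrm{id}$, replace $\omega_t$ by its limit $\mu:=\tr_{\Gamma(\widetilde{V}_-)}(\omega)\otimes\rho_\infty$ up to an $o(1)$ error bounded by $\|\omega_t-\mu\|_1\,\|y\|$. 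The surviving term is $\tr\bigl((\nu_t\otimes\rho_\infty)\,y\bigr)$ with $\nu_t:=\widetilde{\TT}^0_{t*}(\tr_{\Gamma(\widetilde{V}_-)}\omega)$, and it vanishes identically: by the defining property of $\mathbb{E}_{\rho_\infty}$ one has $\tr((\nu\otimes\rho_\infty)x)=\tr(\nu\,\mathbb{E}_{\rho_\infty}(x))=\tr((\nu\otimes\rho_\infty){\cal E}(x))$ for every $\nu\in L^1(\Gamma(\widetilde{V}_0))$. Hence $\tr(\omega\,\widetilde{\TT}_t(y))\to0$ for all $\omega$, which is Eq. \eqref{eq:eid}. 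I would emphasize that this predual detour is precisely what makes the ${\rm w}^*$-claim work for \emph{every} bounded $x$: since ${\rm w}^*$-limits are not stable under ${\rm w}^*$-approximation of $y$, one cannot simply reduce to the (${\rm w}^*$-dense) Weyl operators, and the clean substitute is to exploit that every functional of the form $\nu\otimes\rho_\infty$ annihilates $\ker{\cal E}$. The main obstacle I anticipate is $(\star)$ itself, namely promoting the ergodicity of the stable part---available a priori only as convergence of individual input states---to a statement uniform enough to accommodate a joint, possibly entangled, input on which only the $\Gamma(\widetilde{V}_-)$-factor evolves; the density-plus-contraction argument above is exactly what bridges this gap.
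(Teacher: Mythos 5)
Your proof is correct, but it follows a genuinely different route from the paper's. The paper works at the level of characteristic functions: starting from the final assertion of Lemma \ref{lem:so} (which gives the claim on Weyl operators), it transfers tightness from $\sigma_t=\widetilde{\TT}_{t*}{\cal E}_*(\rho)$ to $\rho_t=\widetilde{\TT}_{t*}(\rho)$ via the representation of $e^{-\alpha N}$ as an integral of Weyl operators (Lemma A.2 in \cite{FP24}), and then upgrades weak convergence plus tightness to trace-norm convergence, in the spirit of the quantum L\'evy continuity theorem. You instead exploit the tensor factorization $\widetilde{\TT}_t=\widetilde{\TT}^0_t\otimes\widetilde{\TT}^-_t$ (which the paper asserts after Theorem \ref{thm:main} and uses again in Section \ref{sec:ds}) together with the trace-norm ergodicity of the stable factor from Theorem 9 in \cite{FP24}, and you handle entangled inputs by the uniform-contraction-plus-$L^1$-density argument; the unitary factor is then harmless because its predual is a trace-norm isometry. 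Your route is more elementary in that it bypasses the tightness machinery entirely, at the price of invoking the full strength of the trace-norm convergence theorem for stable GQMSs (which the paper also cites, so nothing new is assumed); the paper's route is more self-contained at the level of Weyl operators and does not require writing the semigroup as a tensor product of maps. Your closing observation --- that the ${\rm w}^*$-claim for arbitrary bounded $x$ must be obtained by duality from the predual statement, since one cannot simply ${\rm w}^*$-approximate $x$ by Weyl operators --- is correct and worth making explicit; the paper leaves this duality step implicit.
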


\begin{proof}
We want to prove that Eq. \ref{eq:eid} (which we know to be true for $x$ being a Weyl operator) holds for every bounded operator. The proof follows the  same circle of ideas as in the one of quantum L\'evy continuity theorem in \cite{FP24}. Let us define the shifted total number operator
$$N:=W(w)^*\sum_{j=1}^{d}a_j^\dagger a_j W(w),$$
where $W(w)$ is the Weyl operator in the statement of Lemma \ref{lem:transmodes}; we remark that
$[\widetilde{H}_0,N]=0.$
Lemma \ref{lem:so} shows that for every initial state $\rho$, one has
\begin{equation} \label{eq:ccf}
\lim_{t \rightarrow +\infty} |\hat{\rho_t}(z)-\hat{\sigma_t}(z)|=0,\end{equation}
where $\rho_t=\widetilde{\TT}_{t*}(\rho)$ and $\sigma_t=\widetilde{\TT}_{t*}{\cal E}_*(\rho).$ Notice that for every $\alpha \geq 0$, one has that $\varphi(\alpha):=\tr(\sigma_t e^{-\alpha N})$ does not depend on $t$. This implies that the family of states $\sigma_t$ is tight and we will show that this property transfers to $\rho_t$. Indeed, one can use the following representation formula (Lemma A.2 in \cite{FP24}): for $\alpha>0$ one has
\begin{equation}\label{eq:la2}
\frac{(\theta(\alpha)+1)^{2d}}{(2\pi)^{2d}} \int_{\RR^{2d}}e^{-\theta(\alpha)(|x|^2+|y|^2)/2-2i\sigma(x+iy,w)}W(x+iy) dxdy=e^{-\alpha N}, \end{equation}
where
$$\theta(\alpha)=1+\frac{2}{e^\alpha-1}.$$
Using Eq. \eqref{eq:la2} one can see that Eq. \eqref{eq:ccf} implies that for every $\alpha \geq 0$
\begin{equation}
    \lim_{t \rightarrow +\infty} \varphi_t(\alpha)=\varphi(\alpha),
\end{equation}
where $\varphi_t(\alpha)=\tr(\rho_te^{-\alpha N}).$ We can now apply the same strategy as in the proof of Lemma A4 in \cite{FP24} to show that
\begin{itemize}
    \item for every $\epsilon>0$, there exists a finite projection $p_\epsilon$ such that for every $t \geq 0$,
$$\tr(\rho_t p_\epsilon), \tr(\sigma_t p_\epsilon)>1-\epsilon;$$
\item the following limit holds true:
$${\rm w} \text{-}\lim_{t \rightarrow +\infty } \rho_t-\sigma_t=0.$$
\end{itemize}
This two statements together imply that
$$\lim_{t \rightarrow +\infty } \|\rho_t-\sigma_t\|_1=0.$$
Indeed, for every $\epsilon>0$, there exists a $t_\epsilon>0$ such that for every $t\geq t_\epsilon$ one has
$$\|p_\epsilon(\rho_t-\sigma_t)p_\epsilon\|_1 <\epsilon.$$
Therefore,
\[\begin{split}|\tr((\rho_t-\sigma_t)x)| &\leq \tr(((\rho_t-\sigma_t)-p_\epsilon (\rho_t-\sigma_t)p_\epsilon)x) \quad (I)\\
&+\tr((p_\epsilon (\rho_t-\sigma_t)p_\epsilon)x). \quad (II)
\end{split}\]
Notice that $(I) \leq 6\epsilon \|x\|_\infty$ and $(II) \leq \epsilon \|x\|_\infty$ and we are done.
\end{proof}

As we already hinted, Theorem \ref{thm:eid} has a deep physical interpretation, in that it shows that when $\TT$ admits a normal invariant state, environment-induced decoherence (EID) takes place; let us first recall the definition of EID following \cite{BO03}. The dynamics $\TT$ shows EID if there exists a decomposition
$$B({\frak h})={\cal M}_1\oplus {\cal M}_2$$
such that
\begin{itemize}
\item ${\cal M}_1$ is a $\TT$-invariant $W^*$-algebra,
\item ${\cal M}_2$ is a $\TT$-invariant and $^*$-invariant ${\rm w}^*$-closed subspace,
\item $\TT$ acts on ${\cal M}_1$ as a family of $^*$-automorphisms,
\item for every $x \in {\cal M}_2$,
$${\rm w}^*\text{-}\lim_{t \rightarrow +\infty} \TT_t(x)=0$$
\end{itemize}
${\cal M}_1$ is known as the algebra of effective observables, while ${\cal M}_2$ is the set of those observables which after a long time cannot be detected anymore. The following result is just a reformulation of Theorem \ref{thm:eid}.

\begin{coro} \label{coro:EID}
    If \eqref{eq:hypo1} is true, then $\widetilde{\TT}$ undergoes EID with
    $${\cal M}_1={\cal N}(\widetilde{\TT})=B(\Gamma(\widetilde{V}_0)\otimes \mathbb{1} \text{ and } {\cal M}_2=({\rm Id}-{\cal E})(B(\Gamma(\CC^d))).$$
\end{coro}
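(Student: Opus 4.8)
The plan is to verify, one at a time, the four defining properties of environment-induced decoherence recalled above, reading off the only nontrivial limit directly from Theorem \ref{thm:eid}. The structural facts doing all the work are the idempotency and normality of the conditional expectation ${\cal E}$, together with the tensor factorization of the dynamics supplied by Theorem \ref{thm:main}; nothing else beyond Theorem \ref{thm:eid} should be needed, so the proof is essentially bookkeeping once the right intertwining is isolated.

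First I would record the algebraic decomposition. Being a conditional expectation, ${\cal E}$ is in particular idempotent, ${\cal E}^2={\cal E}$, so $({\rm Id}-{\cal E})$ is the complementary idempotent, $B(\FS)={\rm Ran}({\cal E})\oplus\ker({\cal E})$, and ${\rm Ran}({\rm Id}-{\cal E})=\ker({\cal E})$. By construction ${\rm Ran}({\cal E})=B(\Gamma(\widetilde{V}_0))\otimes\mathbf{1}$, which by Theorem \ref{prop:perif} together with the factorization $\FS=\Gamma(\widetilde{V}_0)\otimes\Gamma(\widetilde{V}_-)$ coincides with ${\cal N}(\widetilde{\TT})={\cal M}_1$; hence ${\cal M}_2=({\rm Id}-{\cal E})(B(\FS))=\ker({\cal E})$ and $B(\FS)={\cal M}_1\oplus{\cal M}_2$, which is the required splitting. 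That ${\cal M}_1$ is a $\widetilde{\TT}$-invariant $W^*$-algebra on which $\widetilde{\TT}$ acts by $^*$-automorphisms is exactly Proposition \ref{prop:auto}(1) applied to the decoherence-free subalgebra. The $^*$-invariance of ${\cal M}_2$ is immediate from ${\cal E}$ being $^*$-preserving, so that $({\rm Id}-{\cal E})(x^*)=(({\rm Id}-{\cal E})(x))^*$, while its w$^*$-closedness follows from normality of ${\cal E}$, which makes $\ker({\cal E})$ w$^*$-closed.

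The one property requiring an actual computation is the $\widetilde{\TT}$-invariance of ${\cal M}_2$, and I would deduce it from the intertwining ${\cal E}\,\widetilde{\TT}_t=\widetilde{\TT}_t\,{\cal E}$ for every $t\geq 0$. It suffices to check this on elementary tensors $a\otimes b$ with $a\in B(\Gamma(\widetilde{V}_0))$ and $b\in B(\Gamma(\widetilde{V}_-))$, since these are w$^*$-total and all maps involved are normal. Using that Theorem \ref{thm:main} makes the dynamics factorize as $\widetilde{\TT}_t=\widetilde{\TT}^0_t\otimes\widetilde{\TT}^-_t$, with $\widetilde{\TT}^0$ the Hamiltonian evolution on the first factor and $\widetilde{\TT}^-$ the stable part on the second, together with $\mathbb{E}_{\rho_\infty}(a\otimes b)=\tr(\rho_\infty b)\,a$ and the invariance $\widetilde{\TT}^-_{t*}(\rho_\infty)=\rho_\infty$, one computes $\mathbb{E}_{\rho_\infty}(\widetilde{\TT}_t(a\otimes b))=\tr(\rho_\infty b)\,\widetilde{\TT}^0_t(a)=\widetilde{\TT}^0_t(\mathbb{E}_{\rho_\infty}(a\otimes b))$, which is precisely ${\cal E}\widetilde{\TT}_t=\widetilde{\TT}_t{\cal E}$ on such elements. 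The intertwining then yields invariance of ${\cal M}_2=\ker({\cal E})$: if ${\cal E}(x)=0$, then ${\cal E}(\widetilde{\TT}_t(x))=\widetilde{\TT}_t({\cal E}(x))=0$.

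Finally, the decay condition is immediate: for $x\in{\cal M}_2=\ker({\cal E})$ one has ${\cal E}(x)=0$, so Theorem \ref{thm:eid} gives ${\rm w}^*\text{-}\lim_{t\rightarrow +\infty}\widetilde{\TT}_t(x)={\rm w}^*\text{-}\lim_{t\rightarrow +\infty}\widetilde{\TT}_t(x-{\cal E}(x))=0$. This settles all four conditions. The only genuinely load-bearing step is the intertwining ${\cal E}\widetilde{\TT}_t=\widetilde{\TT}_t{\cal E}$; once it is established the result is, as announced, a pure reformulation of Theorem \ref{thm:eid}, with no new analytic estimate required.
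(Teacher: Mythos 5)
Your proposal is correct and follows exactly the route the paper intends: the paper gives no explicit proof, stating only that the corollary is a reformulation of Theorem \ref{thm:eid}, and your argument is precisely that reformulation spelled out, with the intertwining ${\cal E}\widetilde{\TT}_t=\widetilde{\TT}_t{\cal E}$ (via the factorization $\widetilde{\TT}_t=\widetilde{\TT}^1_t\otimes\widetilde{\TT}^2_t$ and invariance of $\rho_\infty$) correctly identified as the only step needing verification. No gaps.
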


To the best of our knowledge, the following are the cases in which EID is known to take place:
\begin{itemize}
    \item when the semigroup is uniformly continuous and admits a faithful normal invariant state (Theorem 12 and 22 in \cite{SU24}),
    \item when the semigroup admits a faithful normal invariant state (the authors of \cite{CJ20} prove it in the case of discrete time dynamics, but the proof should carry to the continuous time setting),
    \item when the semigroup acts on a finite dimensional matrix algebra and the linear space generated by eigenvectors of the generator corresponding to purely imaginary eigenvalues coincides with $\NN$ (Theorem 6 in \cite{CSU13}).
\end{itemize}
Corollary \ref{coro:EID} does not follows from any of the previously mentioned general results, since it holds also for semigroups without any faithful normal invariant state.

\section{Decoherence Speed} \label{sec:ds}

In the previous section, we showed that for every $x \in B(\hh)$, one has
$${\rm w}^*\text{-}\lim_{t \rightarrow +\infty}\widetilde{\TT}_t(x-{\cal E}(x))=0.$$
In this section, we will provide some results concerning the speed at which this decay happens \textbf{under the hypothesis that $\TT$ admits a faithful invariant state.} The convergence will be considered in a family of noncommutative $L^2$ spaces induced by any faithful normal invariant state for $\widetilde{\TT}$. The main result of this section is that, as one may expect, the slowest rate of convergence can be determined by looking only at the action of the semigroup on $\mathbb{1}\otimes B(\Gamma(\widetilde{V}_-))$, on which it acts in an ergodic way.

We will make use in several places of the equivalence between Poincar\'e-type inequalities and uniform exponential decay in $L^2$; while it is fairly easy to find references of this fact for selfadjoint ergodic semigroups (see, for instance, Chapter 2 in \cite{GZ03}), we could only find one reference treating ergodic semigroups which are not selfadjoint (see Theorem 2.3 \cite{Li89}) and none for the theorem in the generality that we need, i.e. in the case of a semigroup which is neither ergodic, nor selfadjoint. Theorem \ref{theo:pi} in the Appendix serves this purpose.

\bigskip Let us introduce the framework in which we are going to establish the decoherence speed. Given any faithful normal invariant state $\omega_\infty:=\omega \otimes \rho_\infty$ for $\widetilde{\TT}$ and $s \in [0,1]$, we can define the following scalar product on $B(\FS)$:
\begin{equation} \label{eq:inner}
\langle x, y \rangle_s:=\tr(\omega_\infty^s x^* \omega_\infty^{1-s}y), \quad x,y \in B(\FS).
\end{equation}
We define $\LD$ as the Hilbert space given by the completion of $B(\FS)$ with respect to such inner product. One can see that $B(\FS)$ embeds continuosly into $\LD$ and that $\widetilde{\TT}$ extends to a strongly continuous contraction semigroup $\widetilde{T}$ on $\LD$ (see Theorem 2.3 in \cite{CF00}).

Notice that $B(\FS)=B(\Gamma(\widetilde{V}_0)) \overline{\otimes}B(\Gamma(\widetilde{V}_-))$ and for all $x,a \in B(\Gamma(\widetilde{V}_0))$ and $ y,b \in B(\Gamma(\widetilde{V}_-))$, one has
$$\langle x \otimes y, a \otimes b \rangle_s=\tr(\omega^s x^* \omega^{1-s}a)\tr(\rho_\infty^s y^* \rho_\infty^{1-s}b)=:\langle x ,a\rangle_{1,s}\langle y, b \rangle_{2,s}.$$
If we define $L^2_s(\omega)$ and $L^2_s(\rho_\infty)$ in the same way as we did for $L^2_s(\omega_\infty)$, one has that
$$\LD=L^2_s(\omega)\otimes L^2_s(\rho_\infty).$$
Let us define
$$\widetilde{\TT}_t^1(x):=e^{-it\widetilde{H}_0} x e^{it\widetilde{H}_0}, \quad x \in B(\Gamma(\widetilde{V}_0))$$
and $\widetilde{\TT}^2$ as the GQMS acting on $B(\Gamma(\widetilde{V}_-))$ corresponding to $\widetilde{H}_-$ and $\widetilde{L}_l$ ($\widetilde{H}_0$, $\widetilde{H}_-$ are defined in Theorem \ref{thm:main}). $\widetilde{\TT}^1$ (resp. $\widetilde{\TT}^2$) extends to a strongly continuous contraction semigroup $\widetilde{T}^1$ (resp. $\widetilde{T}^2$) on $L^2_s(\omega)$ (resp. $L^2_s(\rho_\infty)$; we denote by $\widetilde{L}_1$ and $\widetilde{L}_2$ the generators of $\widetilde{T}^1$ and $\widetilde{T}^2$, respectively. One has that $\widetilde{\TT}_t=\widetilde{\TT}_t^1 \otimes \widetilde{\TT}_t^2$ and, consequently, $\widetilde{T}_t=\widetilde{T}^1_t \otimes \widetilde{T}^2_t$ for every $t \geq 0$.

$\widetilde{T}^2$ is ergodic, in the sense that for every $x \in L^2_s(\rho_\infty)$ such that $\langle x, \mathbf{1} \rangle_{2,s}=0$, one has
$$\lim_{t \rightarrow +\infty} \|\widetilde{T}^2_t(x)\|_{2,s}=0.$$

In this case, Theorem \ref{theo:pi} shows that the optimal constant $\lambda\geq 0$ such that
$$\|\widetilde{T}^2_t(x)\|_{2,s} \leq e^{-\lambda t}\|x\|_{2,s}, \quad \forall x \in D( L^2_s(\rho_\infty), \, \langle x, \mathbf{1} \rangle_{2,s}=0$$
is given by
\begin{equation} \label{eq:gapL2}
{\rm gap}_s(\widetilde{L}_2):=\inf\left \{\mathscr{E}_{2,s}(x,x):x \in D(\widetilde{L}_2)\subset L^2_s(\rho_\infty),\,\|x\|=1,\,\langle x, \mathbf{1} \rangle_{2,s}=0\right \},\end{equation}
where $\mathscr{E}_{2,s}$ is the positive quadratic form defined as
\begin{align*}
&\left .\mathscr{E}_{2,s}(x)=-\frac{d}{dt}\frac{\|\widetilde{T}^2_t(x)\|^2_{2,s}}{2}\right |_{t=0}=-\Re(\langle \widetilde{L}_2(x),x \rangle_{2,s}).
\end{align*}
Using Proposition 2.2 in \cite{CF00} one can easily check that ${\cal E}$ also extends to the orthogonal projection $E$ on the closure of $\NN$ in  $\LD$. The following result holds true.

\begin{theo} The optimal constant $\lambda \geq 0$ such that
\begin{equation} \label{eq:gapL}
\|\widetilde{T}_t(x)\|_{s} \leq e^{-\lambda t}\|x\|_{s}, \quad \forall x \in L^2_s(\omega_\infty), \,\, E(x)=0
\end{equation}
is given by ${\rm gap}_s(L_2)$ defined in Eq. \eqref{eq:gapL2}.
\end{theo}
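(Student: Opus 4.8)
The plan is to exploit the factorization $\widetilde{T}_t=\widetilde{T}^1_t\otimes\widetilde{T}^2_t$ of the semigroup on $\LD=L^2_s(\omega)\otimes L^2_s(\rho_\infty)$ and to reduce the computation of the optimal global rate to the ergodic factor $\widetilde{T}^2$. The first step is to make the projection $E$ explicit. Since $\NN=B(\Gamma(\widetilde{V}_0))\otimes\mathbf{1}$ (Corollary \ref{coro:EID}) and ${\cal E}(a\otimes b)=a\,\tr(\rho_\infty b)\otimes\mathbf{1}$ for elementary tensors, the closure of $\NN$ in $\LD$ is $L^2_s(\omega)\otimes\CC\mathbf{1}$. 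Hence $E={\rm Id}\otimes E_2$, where $E_2$ is the rank-one orthogonal projection onto $\CC\mathbf{1}$ in $L^2_s(\rho_\infty)$, i.e. $E_2 y=\langle \mathbf{1},y\rangle_{2,s}\mathbf{1}$. Consequently the constraint $E(x)=0$ in \eqref{eq:gapL} describes exactly the subspace $\ker E=L^2_s(\omega)\otimes\ker E_2$, where $\ker E_2=\{y:\langle\mathbf{1},y\rangle_{2,s}=0\}$ is precisely the subspace appearing in the definition \eqref{eq:gapL2} of ${\rm gap}_s(\widetilde{L}_2)$.

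Next I would record two facts. First, because $\omega$ commutes with $\widetilde{H}_0$ (Theorem \ref{theo:invst}), the functional calculus gives $[\omega^s,e^{it\widetilde{H}_0}]=0$, and a direct computation using the cyclicity of the trace shows $\langle \widetilde{T}^1_t x,\widetilde{T}^1_t y\rangle_{1,s}=\langle x,y\rangle_{1,s}$; thus $\widetilde{T}^1$ is a group of unitaries on $L^2_s(\omega)$ and $\|\widetilde{T}^1_t\|=1$ for every $t$. Second, $\ker E_2$ is $\widetilde{T}^2$-invariant: for $y\in\ker E_2$ one has $\langle \mathbf{1},\widetilde{T}^2_t y\rangle_{2,s}=\tr(\rho_\infty\,\widetilde{\TT}^2_t(y))=\tr(\rho_\infty y)=\langle\mathbf{1},y\rangle_{2,s}=0$ by invariance of $\rho_\infty$. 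It follows that $\ker E$ is $\widetilde{T}$-invariant and that $\widetilde{T}_t|_{\ker E}=\widetilde{T}^1_t\otimes(\widetilde{T}^2_t|_{\ker E_2})$.

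The core step is then the identity of operator norms $\|\widetilde{T}_t|_{\ker E}\|=\|\widetilde{T}^2_t|_{\ker E_2}\|$ for every $t\geq 0$. The upper bound is submultiplicativity of the tensor norm combined with $\|\widetilde{T}^1_t\|=1$; the matching lower bound follows by testing on product vectors $x_1\otimes x_2$ with $x_2\in\ker E_2$, for which $\|\widetilde{T}_t(x_1\otimes x_2)\|_s=\|\widetilde{T}^1_t x_1\|_{1,s}\,\|\widetilde{T}^2_t x_2\|_{2,s}=\|x_1\|_{1,s}\,\|\widetilde{T}^2_t x_2\|_{2,s}$ thanks to the unitarity in the first factor. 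Since the two norm functions of $t$ coincide, the inequality $\|\widetilde{T}_t(x)\|_s\leq e^{-\lambda t}\|x\|_s$ holds for all $x\in\ker E$ and all $t$ if and only if $\|\widetilde{T}^2_t(y)\|_{2,s}\leq e^{-\lambda t}\|y\|_{2,s}$ holds for all $y\in\ker E_2$ and all $t$; hence the sets of admissible constants $\lambda$ are identical, and so are their suprema. By the ergodicity of $\widetilde{T}^2$ and Theorem \ref{theo:pi}, the optimal constant for $\widetilde{T}^2$ on $\ker E_2$ equals ${\rm gap}_s(\widetilde{L}_2)$, which gives the claim.

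The only genuinely delicate point is that no slack may be lost in passing from the full semigroup to the dissipative block, i.e. that $\|\widetilde{T}^1_t\|$ equals $1$ \emph{exactly} rather than merely being bounded by $1$; this is where unitarity of $\widetilde{T}^1$, and not just contractivity, is essential, and it is what forces the optimal global decay rate to be governed entirely by $\widetilde{T}^2$. The remaining ingredients, namely the identification of $E$, the invariance of $\ker E_2$, and the tensor-norm factorization, are routine once the above structure is in place.
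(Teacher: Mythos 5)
Your proof is correct, but it reaches the conclusion by a genuinely different mechanism than the paper. The paper applies Theorem \ref{theo:pi} to the \emph{full} semigroup $\widetilde{T}$ on a core $D=L^2_s(\omega)\otimes D(\widetilde{L}_2)$, computes the Dirichlet form of the product semigroup explicitly (using, as you do, that $\widetilde{T}^1$ is unitary because $\omega$ commutes with $e^{it\widetilde{H}_0}$), and then proves the Poincar\'e-type bound $\mathscr{E}_s(x)\geq{\rm gap}_s(\widetilde{L}_2)\|x\|_s^2$ on $({\rm Id}-E)(D)$ via a Gram-matrix argument: with $A_{ij}=\langle a_i,a_j\rangle_{1,s}$, $B_{ij}=\langle b_i,b_j\rangle_{2,s}$, $C_{ij}=\mathscr{E}_{2,s}(b_i,b_j)$ it uses $C\geq{\rm gap}_s(\widetilde{L}_2)B$ and the Schur product theorem to get $A*C\geq{\rm gap}_s(\widetilde{L}_2)\,A*B$. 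You instead bypass the Dirichlet form of the full semigroup entirely and work at the level of operator norms on the completed Hilbert space: identifying $E={\rm Id}\otimes E_2$, checking the invariance of $\ker E_2$, and invoking the exact tensor-norm identity $\|\widetilde{T}^1_t\otimes(\widetilde{T}^2_t|_{\ker E_2})\|=\|\widetilde{T}^1_t\|\,\|\widetilde{T}^2_t|_{\ker E_2}\|$ together with $\|\widetilde{T}^1_t\|=1$. Your route is more elementary (no Schur product theorem, no choice of core, no verification of the hypotheses of Theorem \ref{theo:pi} for the product semigroup; you only use Theorem \ref{theo:pi} for the ergodic factor $\widetilde{T}^2$, which the paper already invokes when defining ${\rm gap}_s(\widetilde{L}_2)$), and you correctly isolate the one point where optimality could be lost, namely that $\widetilde{T}^1$ is isometric and not merely contractive. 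What the paper's approach buys in exchange is the explicit formula \eqref{eq:specquad} for the Dirichlet form of the tensor semigroup, which is of independent interest; your argument yields only the norm statement. Both proofs are complete for the theorem as stated.
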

Notice that the optimal decay constant cannot be bigger than ${\rm gap}(L_2)$, since it is the optimal decay constant of $\widetilde{T}^2$ and $\widetilde{T}_t(\mathbf{1} \otimes x)=\mathbf{1} \otimes \widetilde{T}^2_t(x)$ for every $t \geq 0$, therefore the nontrivial statement of the theorem is that the optimal constant is not strictly smaller than ${\rm gap}_s(L_2)$.

A physical interpretation of this theorem is that the decay rate is completely determined by the dissipative part (notice that it does not even depend on the choice of the reference invariant state); determining ${\rm gap}_s(L_2)$, then, reduces to the problem of finding the spectral gap of an ergodic GQMS, which has been addressed in \cite{FPSU24}. To be precise, the authors analysed the case of KMS ($s=1/2$) and GNS ($s=1$) inner products, but it should be possible to deal with the remaining cases by standard results in complex interpolation theory (\cite{Ko84}).

\begin{proof}

Let us consider $D=L^2_{1,s}(\omega) \otimes D(\widetilde{L}_2)$; for every $x=\sum_{i=1}^{k}a_i \otimes b_i \in D$,
$$\mathscr{E}_{s}(x):=\left .-\frac{d}{dt}\frac{\|\widetilde{T}_t(x)\|^2_s}{2}\right |_{t=0}$$
is well defined and
\begin{equation} \label{eq:specquad}
\mathscr{E}_{s}\left (\sum_{i=1}^{k}a_i \otimes b_i\right )=\sum_{i,j=1}^{k} \langle a_i,a_j\rangle_{1,s} \mathscr{E}_{2,s}(b_i,b_j).
\end{equation}
Indeed, notice that
\[\begin{split}
    &\left \| \widetilde{T}_t \left ( \sum_{i=1}^{k}a_i \otimes b_i\right ) \right \|_{s}^2=\left \|  \sum_{i=1}^{k}\widetilde{T}^1_t(a_i) \otimes \widetilde{T}_t^2(b_i) \right \|_{s}^2=\\
    &\sum_{i,j=1}^{k} \langle \widetilde{T}_t^1(a_i),\widetilde{T}_t^1(a_j) \rangle_{1,s}\langle \widetilde{T}_t^2(b_i),\widetilde{T}_t^2(b_j) \rangle_{2,s}. 
\end{split}\]
However, since $\omega$ commutes with $e^{itH_0}$, one has that $\widetilde{\TT}^1$ is a semigroup of unitary operators. Indeed, for every $x,y \in B(\Gamma(V_0))$ the following holds true:
\[\begin{split}
    &\langle \widetilde{T}_t^1(x),\widetilde{T}_t^1(y) \rangle_{1,s}=\tr(\omega^s e^{-it\widetilde{H}_0}x^* e^{it\widetilde{H}_0} \omega^{1-s}e^{-it\widetilde{H}_0}y e^{it\widetilde{H}_0})=\\
    &\tr(\omega^s e^{-it\widetilde{H}_0}x^* e^{it\widetilde{H}_0} \omega^{1-s}e^{-it\widetilde{H}_0}y e^{it\widetilde{H}_0})=\tr(\omega^s x^*  \omega^{1-s}y )=\langle x, y \rangle_{1,s}
\end{split}\]
and this extends to the whole $L^2_s(\omega)$ by a density argument. Therefore,
$$\left \| \widetilde{T}_t \left ( \sum_{i=1}^{k}a_i \otimes b_i\right ) \right \|_{s}^2=\sum_{i,j=1}^{k} \langle a_i,a_j \rangle_{1,s}\langle \widetilde{T}_t^2(b_i),\widetilde{T}_t^2(b_j) \rangle_{2,s}$$
and we get Eq. \eqref{eq:specquad}.

Since $\widetilde{T}_t=\widetilde{T}_t^1\otimes \widetilde{T}_t^2$ and $D(\widetilde{L}_2)$ is $T_t^2$-invariant, we have that $\widetilde{T}_t(D) \subseteq D$ for every $t \geq 0$. Moreover, $\sum_{i=1}^k a_i \otimes b_i \in D$, one can check that
$$({\rm Id}-E)\left (\sum_{i=1}^k a_i \otimes b_i \right )=\sum_{i=1}^k a_i \otimes (b_i-\rho_\infty(b_i)) \in D.$$
Therefore $\overline{D}:=({\rm Id}-E)(D) \subset D$ and it is spanned by elements of the form $a \otimes b \in D$ such that $\rho_\infty(b)=0$. Notice that $D$ is dense in $\LD$, therefore $\overline{D}$ is dense in $({\rm Id}-E)(\LD)$. Theorem \ref{theo:pi} shows that the optimal constant for Eq. \eqref{eq:gapL} to hold is given by
$${\rm gap}(\widetilde{L}):=\inf\{\mathscr{E}_s(f): \, f \in D, \|f\|=1, \, E(f)=0\}.
$$

In order to conclude, we need to prove that ${\rm gap}(\widetilde{L}) \geq {\rm gap}(\widetilde{L}_2)$. Given $\sum_{i=1}^k a_i \otimes b_i \in \overline{D}$, let us define 
$$A_{i,j}=\langle a_i,a_j \rangle_{1,s}, \quad B_{i,j}=\langle b_i,b_j \rangle_{2,s}, \quad C_{i,j}=\mathscr{E}_{2,s}(b_i,b_j).$$
Notice that $A,B,C \geq 0$, moreover the definition of ${\rm gap}(\widetilde{L}_2)$ implies that $C \geq {\rm gap}(\widetilde{L}_2) B$. Therefore, if we use $*$ to denote the Hadamard product, one has that $A*C \geq {\rm gap}(\widetilde{L}_2) A *B$ as well (this is due to bilinearity of Hadamard product and to Schur product theorem). Notice that
$$\mathscr{E}_s \left (\sum_{i=1}^k a_i \otimes b_i \right )=\langle \underline{1}, A*C \underline{1} \rangle \geq {\rm gap}(\widetilde{L}_2) \langle \underline{1}, A*B \underline{1} \rangle={\rm gap}(\widetilde{L}_2) \left \|\sum_{i=1}^{k}a_i \otimes b_i \right \|_{s}^2$$
and we are done.
\end{proof}

\section{Ergodic Theory, Transience and Recurrence} \label{sec:et}

In this section we will study the limit of ergodic means and we will show that an ergodic theorem holds true. Moreover, we will be able to completely determine the decomposition of the system Hilbert space related to recurrence and transience (see Theorem 9 in \cite{Um06} and Remark 12 in \cite{CG21}). 

\bigskip Given an initial state $\rho$, one can easily see from Theorem \ref{thm:eid} that $\widetilde{\TT}_{t*}(\rho)$ might not converge to any limit. On the other end, averaging in time usually ensures a better behavior, as the following theorem shows.

Let us first define the following conditional expectation:
\begin{align*}{\cal F}_*:L^1(\FS) &\rightarrow L^1(\FS)\\
\omega & \mapsto \sum_{x \in {\rm Sp}(\widetilde{H}_0)} P_x\tr_{B(\Gamma(\widetilde{V}_-))}(\omega)P_x\otimes \rho_\infty,\end{align*}
where $P_x$ denotes the spectral projection of $\widetilde{H}_0$ corresponding to the eigenvalue $x$.

\begin{theo} \label{thm:ergo}
For every initial state $\rho$, one has
$$\lim_{t \rightarrow +\infty} \frac{1}{t}\int_0^t\widetilde{\TT}_{s*}(\rho)ds={\cal F}_*(\rho)$$
in trace norm.
\end{theo}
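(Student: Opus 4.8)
The plan is to prove the ergodic mean convergence by combining the decoherence result of Theorem~\ref{thm:eid} with a direct analysis of the Hamiltonian evolution on the factor $B(\Gamma(\widetilde{V}_0))$. First I would observe that since $\widetilde{\TT}_{t*}=\widetilde{\TT}^1_{t*}\otimes \widetilde{\TT}^2_{t*}$, and since Theorem~\ref{thm:eid} already gives
$\lim_{t\to+\infty}\|\widetilde{\TT}_{t*}(\rho-{\cal E}_*(\rho))\|_1=0$,
the Cesàro means of $\widetilde{\TT}_{t*}(\rho)$ and of $\widetilde{\TT}_{t*}({\cal E}_*(\rho))$ have the same limit (the time average of a trace-norm null family is trace-norm null). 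Hence it suffices to compute the ergodic mean of $\widetilde{\TT}_{t*}$ applied to states of the form $\omega\otimes\rho_\infty$ with $\omega\in L^1(\Gamma(\widetilde{V}_0))$. Since $\rho_\infty$ is invariant for $\widetilde{\TT}^2$, this reduces the whole problem to the purely Hamiltonian dynamics $\widetilde{\TT}^1_{t*}(\omega)=e^{-it\widetilde{H}_0}\omega\, e^{it\widetilde{H}_0}$ on the first factor.

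The core of the argument is therefore the ergodic theorem for the conjugation by $e^{-it\widetilde{H}_0}$, where by Lemma~\ref{lem:transmodes} (after conjugating by $W(w)$) we may take $\widetilde{H}_0=\sum_{j=1}^{d_0}\phi_j a_j^\dagger a_j$ with pure point spectrum and spectral projections $P_x$. Writing $\omega=\sum_{x,y}P_x\,\omega\,P_y$, each off-diagonal block $P_x\,\omega\,P_y$ evolves by the scalar phase $e^{-it(x-y)}$, while the diagonal blocks $P_x\,\omega\,P_x$ are fixed. I would then invoke the mean ergodic theorem for the unitary group $t\mapsto e^{-it(x-y)}$: the Cesàro average $\frac{1}{t}\int_0^t e^{-is(x-y)}ds$ tends to $0$ when $x\neq y$ and equals $1$ when $x=y$. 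Summing over the (finite or countable) spectral decomposition yields
$\lim_{t\to+\infty}\frac1t\int_0^t \widetilde{\TT}^1_{s*}(\omega)\,ds=\sum_{x\in{\rm Sp}(\widetilde{H}_0)}P_x\,\omega\,P_x$
in trace norm, which is exactly the diagonal part appearing in the definition of ${\cal F}_*$.

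To assemble the full statement, I would combine the two reductions: the ergodic mean of $\widetilde{\TT}_{s*}(\rho)$ equals the ergodic mean of $\widetilde{\TT}_{s*}({\cal E}_*(\rho))=\widetilde{\TT}^1_{s*}(\tr_{B(\Gamma(\widetilde{V}_-))}(\rho))\otimes\rho_\infty$, and by the previous paragraph this converges to $\sum_x P_x\,\tr_{B(\Gamma(\widetilde{V}_-))}(\rho)\,P_x\otimes\rho_\infty={\cal F}_*(\rho)$, matching the definition. The main obstacle I anticipate is controlling the interchange of the (possibly infinite) spectral sum with the time average in trace norm: when $\widetilde{H}_0$ has a genuinely infinite number of eigenvalues, one must argue uniform convergence of the truncated Cesàro averages, which I would handle by a standard $\varepsilon/3$ estimate, using that $\omega$ is trace class to truncate the spectral decomposition and the uniform boundedness of $\frac1t\int_0^t e^{-is(x-y)}ds$ by $1$ to bound the tail uniformly in $t$. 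A secondary point requiring care is the commutation of the partial trace with the conditional expectation structure, but this is immediate from the tensor factorization $\FS=\Gamma(\widetilde{V}_0)\otimes\Gamma(\widetilde{V}_-)$ and the invariance of $\rho_\infty$.
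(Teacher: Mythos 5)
Your proposal is correct and follows essentially the same route as the paper: reduce via Theorem~\ref{thm:eid} to the Ces\`aro mean of the purely Hamiltonian evolution $e^{is\widetilde{H}_0}\rho_1 e^{-is\widetilde{H}_0}$ on the first factor, truncate the countable spectrum of $\widetilde{H}_0$ using that $\rho_1$ is trace class, and kill the finitely many retained off-diagonal blocks by the vanishing of $\frac1t\int_0^t e^{is(x-y)}\,ds$. The paper implements your ``$\varepsilon/3$'' step with the spectral projections $Q_{N_\epsilon}$ and the explicit rate $2/(\Delta_\epsilon t)$ coming from the minimal eigenvalue gap of the truncation, which is exactly the uniform tail control you anticipate.
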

\begin{proof}
By Theorem \ref{thm:eid}, one has that for every initial state $\rho$,
\[
\lim_{t \rightarrow +\infty}\|\widetilde{\TT}_t(\rho)-e^{it\widetilde{H}_0}\rho_1 e^{-it\widetilde{H}_0} \otimes \rho_{\infty}\|_1=0,
\]
where $\rho_1=\tr_{B(\Gamma(\widetilde{V}_-))}(\rho)$. Therefore, it suffices to show that
\[
\lim_{t \rightarrow +\infty}\frac{1}{t} \int_0^t e^{is\widetilde{H}_0}\rho_1 e^{-is\widetilde{H}_0}ds =\rho_{\widetilde{H}_0}, \quad \rho_{\widetilde{H}_0}:=\sum_{x \in {\rm Sp}(\widetilde{H}_0)}P_x \rho_1 P_x.
\]
${\rm Sp}(\widetilde{H}_0)$ is a countable set, hence we can list its elements $\{x_1,x_2,\dots\}$; let $Q_n$ the spectral projection corresponding to $\{x_1,\dots, x_n\}$. Since $Q_n \uparrow \mathbf{1}$ in the strong operator topology, one has that for every $\epsilon>0$, there exists $N_\epsilon \in \mathbb{N}$ such that for every $n \geq N_\epsilon$ and for every $t \geq 0$
$$\tr(e^{it\widetilde{H}_0}\rho_1 e^{-it\widetilde{H}_0} Q_n)=\tr(\rho_1 Q_n)=\tr(\rho_{\widetilde{H}_0}Q_n) \geq 1-\epsilon.$$

One has that for every $t >0$
$$\left \|\frac{1}{t} \int_0^t e^{is\widetilde{H}_0}\rho_1 e^{-is\widetilde{H}_0}ds -\rho_{\widetilde{H}_0}\right \|_1 \leq \left \|\frac{1}{t} \int_0^t e^{is\widetilde{H}_0}\rho^\epsilon_1 e^{-is\widetilde{H}_0}ds -\rho^\epsilon_{\widetilde{H}_0}\right \|_1+6\epsilon,$$
where
$$\rho^\epsilon_1=Q_{N_\epsilon}\rho_1Q_{N_\epsilon}, \quad \rho^\epsilon_{\widetilde{H}_0}=Q_{N_\epsilon}\rho_{\widetilde{H}_0}Q_{N_\epsilon}.$$
Let
$$0 <\Delta_\epsilon:=\inf\{x-y: x, y \in \{x_1,\dots, x_{N_\epsilon}\}, \, x \neq y\},$$
then
$$\left \|\frac{1}{t} \int_0^t e^{is\widetilde{H}_0}\rho^\epsilon_1 e^{-is\widetilde{H}_0}ds -\rho^\epsilon_{\widetilde{H}_0}\right \|_1 \leq \sum_{\substack{x,y \in \{x_1,\dots, x_{N_\epsilon}\}\\ x \neq y}}|\tr(P_x \rho_1 P_y)| \frac{2}{\Delta_\epsilon t}$$
can be made arbitrarily small picking $t$ big enough and we are done.
\end{proof}

The previous results, allows to completely determine the splitting of the system Hilbert space introduced in \cite{Um06} into positive recurrent, null recurrent and transient spaces. We recall that such decomposition of the Hilbert space generalizes the decomposition of the states of a classical Markov process into positive recurrent, null recurrent and transient states (see, for instance, Chapter 3 in \cite{Norris}); even in the setting of quantum Markov evolutions, it plays a fundamental role in the study of the qualitative behavior of the semigroup and is deeply related to the notions of accessibility, normal invariant states, conserved quantities and strong symmetries.

Let us start from the positive recurrent subspace, which already appeared in the literature in the $70$s (\cite{EHK78}) and it is defined as
$${\frak R}_+:=\sup\{{\rm supp}(\rho): \, \rho \text{ is a normal invariant state}\}.$$

From Theorem \ref{theo:invst}, one immediately sees that the positive recurrent space is given by
$${\frak R}_+:=\Gamma(\widetilde{V}_0)\otimes {\rm supp}(\rho_\infty).$$

The transient space can be defined as the supremum of those projections in which the system spends a small amount of time no matter what is the initial state (Proposition 8 in \cite{Um06}); in formulas,
$${\frak T}:=\sup\left \{ p \text{ projection }: \exists \, C>0, \, \forall \text{ state }\rho, \, \int_{0}^{+\infty} \tr(\widetilde{\TT}_{s*}(\rho)p)ds \leq C\right \}.$$

Theorem \ref{thm:ergo} implies the following.

\begin{coro}
    ${\frak R}_+^\perp$ is the transient space ${\frak T}$.
\end{coro}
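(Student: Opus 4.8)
The plan is to combine the abstract decomposition of the system space from \cite{Um06,CG21}, namely the orthogonal splitting ${\frak h}={\frak R}_+\oplus{\frak R}_0\oplus{\frak T}$ with recurrent space ${\frak R}_+\oplus{\frak R}_0={\frak T}^\perp$, with the ergodic theorem just proved. Since we have already identified ${\frak R}_+=\Gamma(\widetilde{V}_0)\otimes{\rm supp}(\rho_\infty)$, the assertion ${\frak T}={\frak R}_+^\perp$ is equivalent to the vanishing of the null recurrent space, ${\frak R}_0=\{0\}$. Thus the whole proof reduces to showing that there is no null recurrent part.

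Independently, one half of the claim is immediate and worth recording as a consistency check. Let $p$ be any projection entering the supremum that defines ${\frak T}$, so that $\int_0^{+\infty}\tr(\widetilde{\TT}_{s*}(\rho)p)\,ds\le C$ for every state $\rho$. Testing this bound on an arbitrary normal invariant state $\omega_\infty$, the integrand $\tr(\widetilde{\TT}_{s*}(\omega_\infty)p)=\tr(\omega_\infty p)$ is constant in $s$, so finiteness of the integral forces $\tr(\omega_\infty p)=0$, i.e. $p\le{\rm supp}(\omega_\infty)^\perp$. Taking the supremum over such $p$ and the infimum over invariant states gives ${\frak T}\le{\frak R}_+^\perp$.

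For the core step ${\frak R}_0=\{0\}$ I would argue by contradiction using Theorem \ref{thm:ergo}. The map ${\cal F}_*$ is trace preserving and, by its explicit form, ${\cal F}_*(\rho)$ is always supported in ${\frak R}_+$. Suppose ${\frak R}_0\neq\{0\}$ and pick a state $\rho$ supported in ${\frak R}_0$. Since ${\frak R}_0$ is recurrent and does not communicate with the positive recurrent part (any vector from which ${\frak R}_+$ is reachable would be transient), the evolved states $\widetilde{\TT}_{s*}(\rho)$ stay supported in ${\frak R}_0$; hence so do the ergodic averages $\frac1t\int_0^t\widetilde{\TT}_{s*}(\rho)\,ds$ and their trace-norm limit. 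By Theorem \ref{thm:ergo} this limit is ${\cal F}_*(\rho)$, which is supported in ${\frak R}_+\perp{\frak R}_0$; therefore ${\cal F}_*(\rho)=0$, contradicting $\tr{\cal F}_*(\rho)=\tr\rho=1$. This yields ${\frak R}_0=\{0\}$ and hence ${\frak T}={\frak R}_+^\perp$.

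The delicate point, and the main obstacle, is the forward invariance of the null recurrent space, i.e. the fact that recurrent mass cannot leak into ${\frak R}_+$; this is precisely the input I would borrow from the general recurrence theory of \cite{Um06,CG21}. Should one prefer a self-contained route avoiding the abstract decomposition, one can instead exhibit an increasing family of transient projections with supremum ${\frak R}_+^\perp$: using ${\frak R}_+^\perp=\mathbf 1\otimes{\rm supp}(\rho_\infty)^\perp$ and the factorisation $\widetilde{\TT}=\widetilde{\TT}^1\otimes\widetilde{\TT}^2$, one checks that $\int_0^{+\infty}\tr\bigl(\widetilde{\TT}_{s*}(\rho)(\mathbf 1\otimes p_2)\bigr)\,ds=\tr\bigl(\rho\,U(p_2)\bigr)$ with the potential $U(p_2)=\int_0^{+\infty}\widetilde{\TT}^2_s(p_2)\,ds$, so that $\mathbf 1\otimes p_2$ is transient as soon as $U(p_2)$ is bounded. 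The genuinely hard estimate is then the boundedness of $U(p_2)$ for finite-rank $p_2\le{\rm supp}(\rho_\infty)^\perp$: this rests on the exponential ergodicity of the stable part (the spectral gap computed in \cite{FPSU24}), the subtlety being that $\|\widetilde{\TT}^2_s(p_2)\|_\infty$ itself need not be integrable — slowly relaxing high-excitation states prevent uniform operator-norm decay — while the integrated potential nonetheless remains a bounded operator.
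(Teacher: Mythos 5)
Your argument is correct in substance but routed differently from the paper's. The paper does not pass through the null recurrent space at all: it observes that since ${\cal F}_*$ is trace preserving, the ergodic theorem forces the absorption operator $A({\frak R}_+)=\lim_t\widetilde{\TT}_t(p_{{\frak R}_+})$ to equal $\mathbf{1}$ (via Theorem 2.3.23 in \cite{Gi22}), and then applies Theorem 14 in \cite{CG21}, which states that ${\rm supp}(A({\frak R}_+)-p_{{\frak R}_+})$ is transient; with $A({\frak R}_+)=\mathbf{1}$ this gives ${\frak R}_+^\perp\subseteq{\frak T}$ in one stroke. Your reduction to ${\frak R}_0=\{0\}$ and the contradiction via ${\cal F}_*$ is sound, and your elementary verification of ${\frak T}\leq{\frak R}_+^\perp$ is a nice sanity check, but the ``delicate point'' you defer to the literature deserves a sharper treatment: the parenthetical justification (``any vector from which ${\frak R}_+$ is reachable would be transient'') is essentially a restatement of the very fact you need, and as written sounds circular. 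The clean way to supply it is again Theorem 14 in \cite{CG21}: on ${\frak T}^\perp$ one has $A({\frak R}_+)=p_{{\frak R}_+}$, hence $A({\frak R}_+)$ vanishes on ${\frak R}_0$, and since $\widetilde{\TT}_s(p_{{\frak R}_+})\leq A({\frak R}_+)$ a state supported in ${\frak R}_0$ never acquires mass on ${\frak R}_+$ — which is exactly the non-leakage your contradiction requires. So both proofs ultimately rest on the same external result; the paper's version is more economical because it converts the ergodic theorem directly into $A({\frak R}_+)=\mathbf{1}$, while yours takes a detour through the invariance of ${\frak R}_0$. Your alternative self-contained route via the potential $U(p_2)$ is plausible but, as you acknowledge, the boundedness of $U(p_2)$ for $p_2\leq{\rm supp}(\rho_\infty)^\perp$ is a genuinely nontrivial estimate that you have not carried out, so it cannot stand in for the main argument as written.
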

\begin{proof}
Since ${\cal F}_*(\rho)$ is a state for every initial state $\rho$, Theorem 2.3.23 in \cite{Gi22} implies that the absorption operator (\cite[Definition 2]{CG21}) corresponding to ${\frak R}_+$, which we denote by $A({\frak R}_+)$, is the identity operator. Let $p_{{\frak R}_+}$ be the orthogonal projection onto ${\frak R}_+$; Theorem 14 in \cite{CG21} shows that ${\rm supp}(A({\frak R}_+)-p_{{\frak R}_+})$ is contained in the transient space, however notice that
$$A({\frak R}_+)-p_{{\frak R}_+}=\mathbf{1}-p_{{\frak R}_+},$$
therefore ${\rm supp}(A({\frak R}_+)-p_{{\frak R}_+})={\frak R}_+^\perp$ is the transient space.
\end{proof}

Since the previous corollary implies that the null recurrent space is trivial, we will not provide any information about it, referring the interested reader to \cite{FR03,Um06,CG21,Gi22,Gi22a}.

\section{Classical Ornstein-Uhlenbeck semigroups} \label{sec:cOU}

In this section, after recalling the definition of classical Ornstein-Uhlenbeck semigroups, we will comment on the problem of irreducibility and the characterization of those semigroups admitting an invariant measure in this setting, comparing the results with those in the quantum case. Since the focus of this work is on finitely many modes, we will only consider Ornstein-Uhlenbeck semigroups in finite dimensions (\cite{LMP20}).

\bigskip Let us consider the following linear stochastic differential equation:
\begin{equation} \label{eq:SDE}
dX_t=(AX_t+b)dt+BdW_t,
\end{equation}
where $A \in M_d(\RR)$, $B \in M_m(\RR)$, $b \in \RR^d $ and $W_t$ is a $m$-dimensional Brownian motion. For every initial condition $x$, there exists a unique strong solution $X_t^x$ that is a Markov process; the associated semigroup $T:=(T_t)_{t \geq 0}$ is the classical Ornstein-Uhlenbeck semigroup which acts on functions of the form $x \mapsto e^{i\langle z, x \rangle }$ in the following way:

\begin{equation} \label{eq:cOU}
T_t(e^{i\langle z, \cdot \rangle})(x)=\exp\left (i\int_0^t \langle z,e^{sA}b \rangle ds-\frac{1}{2}\int_0^t \langle z, e^{sA}BB^Te^{sA^T} z \rangle ds\right )e^{i \langle z, e^{tA}x \rangle}, \quad \forall z \in \RR^d.
\end{equation}

Ornstein-Uhlenbeck semigroups represent the class of all evolutions preserving the set of Gaussian measures. The analogy between Eq. \eqref{eq:aW} and Eq. \eqref{eq:cOU} is evident; notice that $A^T$ corresponds to $\mathbf{Z}$ and $BB^T$ to $\mathbf{C}$.

In the case of $b=0$, a characterization of those semigroups admitting an invariant measure and a description of the set of invariant measures in the classical case can be found in Theorems 4.1 and 4.2 of \cite{ZS70}; below we report the facts that are relevant to this work. Before doing this, we need to introduce some more notation: let us define
$$S_-:=\left \{ x \in \mathbb{R}^d: \lim_{t \rightarrow +\infty}e^{tA}x=0\right \}.$$

\begin{theo} \label{thm:cim1}
  The following conditions are equivalent:
  \begin{enumerate}
  \item $T$ admits an invariant measure,
  \item the linear span of the columns of $B$, $AB, \dots A^{d-1}B$ is contained in $S_-$,
  \item $\Sigma_{\infty}:=\lim_{t \rightarrow +\infty}\int_0^t e^{sA}BB^Te^{sA^T}ds$ exists (finite).
  \end{enumerate}
  In this case, every invariant measure is given by the convolution $\nu * {\cal N}(0,\Sigma_\infty)$ where $\nu$ is any invariant measure for Eq. \eqref{eq:SDE} with $B=0$.
\end{theo}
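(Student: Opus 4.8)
The plan is to reduce everything to the characteristic-function description of the transition semigroup. The solution of $dX_t=AX_t\,dt+B\,dW_t$ started at $x$ is Gaussian with mean $e^{tA}x$ and covariance $Q_t:=\int_0^t e^{sA}BB^Te^{sA^T}\,ds$, so Eq. \eqref{eq:cOU} (with $b=0$) together with the fact that the exponentials $e^{i\langle z,\cdot\rangle}$ determine measures shows that a probability measure $\mu$ is invariant if and only if
\begin{equation*}
\hat\mu(z)=e^{-\frac12\langle z,Q_t z\rangle}\,\hat\mu(e^{tA^T}z),\qquad \forall\, t\geq 0,\ z\in\RR^d.\tag{$\star$}
\end{equation*}
I would first record two elementary facts: $t\mapsto Q_t$ is nondecreasing in the order of positive semidefinite matrices (the integrand is $\geq 0$), and $\langle z,Q_\infty z\rangle=\int_0^\infty |B^Te^{sA^T}z|^2\,ds$ whenever the limit exists.

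For (1)$\Rightarrow$(3) I take moduli in $(\star)$ to get $|\hat\mu(z)|\le e^{-\frac12\langle z,Q_t z\rangle}$ for every $t$. Since $\hat\mu$ is continuous with $\hat\mu(0)=1$, it is nonzero near the origin, so $\langle z,Q_t z\rangle$ stays bounded there; by homogeneity it is bounded for all $z$, and a bounded nondecreasing family of positive matrices converges, giving (3). The direction (3)$\Rightarrow$(1) is the verification that $\gamma:=\mathcal N(0,Q_\infty)$ is invariant: the identity $Q_\infty=Q_t+e^{tA}Q_\infty e^{tA^T}$, immediate from the change of variables $s\mapsto s+t$, is exactly $(\star)$ for $\hat\gamma(z)=e^{-\frac12\langle z,Q_\infty z\rangle}$.

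For (2)$\Leftrightarrow$(3) I note that, by Cayley–Hamilton, the span of the columns of $B,AB,\dots,A^{d-1}B$ is the reachability subspace $R$, the smallest $A$-invariant subspace containing $\operatorname{ran}B$, and a direct computation of $\ker Q_t$ gives $\operatorname{ran}Q_t=R$ for every $t>0$. I would then decompose $z$ along the $A^T$-invariant splitting into its stable part and the part carried by eigenvalues of nonnegative real part, and invoke the standard fact that a vector-valued exponential–polynomial $\sum_j p_j(s)e^{\mu_j s}$ lies in $L^2(0,\infty)$ iff all terms with $\Re\mu_j\ge 0$ vanish. Finiteness of $\int_0^\infty|B^Te^{sA^T}z|^2\,ds$ for every $z$ then forces $B^Te^{sA^T}$ to annihilate the generalized eigenspaces of $A^T$ for eigenvalues with $\Re\ge 0$, which is precisely $R\subseteq S_-$; conversely $R\subseteq S_-$ makes $B^Te^{sA^T}z$ decay exponentially, so the integral converges.

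Finally, for the description of the invariant measures, one inclusion is a direct check on characteristic functions, since $\hat\nu\cdot\hat\gamma$ solves $(\star)$ whenever $\hat\nu(z)=\hat\nu(e^{tA^T}z)$ (deterministic invariance). For the converse I set $\hat\nu(z):=\hat\mu(z)\,e^{\frac12\langle z,Q_\infty z\rangle}$ and, using $(\star)$ together with the semigroup identity for $Q_\infty$, verify $\hat\nu(z)=\hat\nu(e^{tA^T}z)$; the only remaining point is that $\hat\nu$ is genuinely the characteristic function of a probability measure. I expect this to be the main obstacle, since dividing out a Gaussian factor does not in general produce a measure. The way through is structural: $\gamma$ is supported on $R\subseteq S_-$, whereas any deterministically invariant measure is supported on the centre subspace $S_0$ — as one sees by letting $t\to\pm\infty$ in $\hat\nu(z)=\hat\nu(e^{tA^T}z)$ along the stable and unstable directions of $A^T$ — so the two factors live on complementary $A$-invariant subspaces and the convolution genuinely factors $\mu$. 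I would make this rigorous by restricting to the complementary subspaces, where the division is unambiguous and $\nu$ is identified as a bona fide measure on $S_0$.
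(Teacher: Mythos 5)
A preliminary caveat: the paper does not actually prove Theorem \ref{thm:cim1}; it quotes it from Theorems 4.1 and 4.2 of \cite{ZS70}, so your attempt can only be judged on its own merits. On those merits, the three equivalences are handled correctly: the characteristic-function reformulation $(\star)$ is the right starting point, (1)$\Rightarrow$(3) via $|\hat\mu(z)|\leq e^{-\frac12\langle z,Q_tz\rangle}$ together with continuity of $\hat\mu$ at the origin, homogeneity and monotonicity of $t\mapsto Q_t$ is sound, (3)$\Rightarrow$(1) via the identity $Q_\infty=Q_t+e^{tA}Q_\infty e^{tA^T}$ is sound, and the identification of the span of the columns of $B,AB,\dots,A^{d-1}B$ with $\operatorname{ran}Q_t$, combined with the $L^2(0,\infty)$ criterion for exponential polynomials and the duality $(E^{A^T}_{\geq 0})^\perp=S_-$, correctly yields (2)$\Leftrightarrow$(3).

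The gap is exactly where you suspected it, and the repair you sketch does not close it. The assertion that ``any deterministically invariant measure is supported on the centre subspace, hence the convolution factors'' is circular: it is a statement about a measure $\nu$, whereas the whole difficulty is to show that $\hat\nu:=\hat\mu\,e^{\frac12\langle\cdot,Q_\infty\cdot\rangle}$ is the characteristic function of \emph{some} measure. From the functional equation and continuity alone you only get $\hat\nu\equiv1$ separately on the stable and unstable subspaces of $A^T$; without positive definiteness you cannot convert this into a support or product-structure statement about $\mu$, and ``restricting to complementary subspaces'' presupposes precisely the factorization of $\mu$ you are trying to establish (note also that an $A$-invariant subspace such as $R$ need not admit an $A$-invariant complement). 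The clean fix is one line: by $(\star)$, $\hat\mu(z)e^{\frac12\langle z,Q_tz\rangle}=\hat\mu(e^{tA^T}z)$ is, for each $t$, the characteristic function of the push-forward $(e^{tA})_*\mu$; as $t\to+\infty$ these converge pointwise to $\hat\nu$, which is continuous at $0$, so L\'evy's continuity theorem shows $\hat\nu$ is the characteristic function of a probability measure $\nu$, and the invariance $\hat\nu(e^{tA^T}z)=\hat\nu(z)$ you already verified identifies $\nu$ as an invariant measure of the deterministic flow. With that substitution your argument is complete.
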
 
Let $\mathscr{L}$ denote the $d$-dimensional Lebesgue measure; if we specialize Theorem \ref{thm:cim1} to the case when $T_t$ admits an invariant measure which is absolutely continuous with respect to $\mathscr{L}$, using the same techniques as in this work, one obtains the counterpart of Theorems \ref{thm:main} and \ref{theo:invst}. Before stating these results, we need to introduce the equivalence class of an Ornstein-Uhlenbeck semigroup: for every linear transformation $M:\mathbb{R}^d \rightarrow \mathbb{R}^d$, we can consider the stochastic process $MX_t$ and the corresponding semigroup $T^M$. The equivalence class of an Orstein-Uhlenbeck semigroup $T$ is defined as
$$[T]:=\{T^M: \, M :\RR^d \rightarrow \RR^d  \text{ linear transformation}\}.$$
\begin{theo}
   The following conditions are equivalent:
   \begin{enumerate}
   \item $T_t$ admits an invariant measure which is absolutely continuous with respect to $\mathscr{L}$,
   \item there exist an integer $1 \leq d_0 \leq d/2$, non zero real numbers $\phi_1,\dots, \phi_{d_0}$ and a representative $\widetilde{T} \in [T]$ such that
$$\widetilde{A}=\begin{pmatrix} \widetilde{A}_0 & O\\
      O  & \widetilde{A}_- \end{pmatrix}, \quad \widetilde{B}\widetilde{B}^T=\begin{pmatrix} O & O\\
     O & (\widetilde{B}\widetilde{B}^T)_- \end{pmatrix}, \quad \widetilde{b}=\begin{pmatrix} \widetilde{b}_0 \\ \widetilde{b}_- \end{pmatrix} $$
      where
      \begin{itemize}
          \item
          $$\widetilde{A}_0=\begin{pmatrix}O & O &O \\
          O & O & -\Phi \\
      O& \Phi & O \end{pmatrix}$$ and
      $\Phi$ is the diagonal matrix with diagonal entries $\phi_1, \dots, \phi_{d_0}$,
          \item ${\rm Sp}(\widetilde{A}_-) \subseteq \{ \Re(z)<0\} $,
          \item $\widetilde{b}_0 \in {\rm supp}(\widetilde{A}_0)$.
      \end{itemize}
      
   \end{enumerate}

Let $k:=\dim(\ker(\widetilde{A}))$ and let us write $x =(x_1,x_2) \in \RR^d$ where $x_1$ contains the first $k+2d_0$ coordinates. In this case, every invariant measure for $\widetilde{T}$ which is absolutely continuous with respect to $\mathscr{L}$ has a density of the form $f(x)=f_1(x_1)f(x_2)$ where $f_2(x)$ is a Gaussian density with mean and covariance given by
$$m_\infty:=\int_0^{+\infty}e^{t\widetilde{A}_-}\tilde{b}_-dt, \quad \Sigma_\infty:=\int_0^{+\infty}e^{t\widetilde{A}_-}(\tilde{B}\tilde{B}^T)_-e^{t\widetilde{A}^T_-}dt$$
and $f_1$ is invariant under the rotations generated by $\widetilde{A}_0$, i.e.
$$f_1(e^{t\widetilde{A}^T_0}x_1)=x_1, \quad \forall t \geq 0.$$
\end{theo}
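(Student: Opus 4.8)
The plan is to transport the arguments behind Theorems \ref{thm:main} and \ref{theo:invst} to the commutative setting, replacing the symplectic/Weyl apparatus by elementary linear-algebraic and measure-theoretic facts about the flow $e^{tA}$ and the Gaussian measure it generates; the dictionary $A \leftrightarrow \mathbf{Z}^T$, $BB^T \leftrightarrow \mathbf{C}$, $b \leftrightarrow \zeta$ makes every quantum step cast a direct classical shadow. The starting point is Theorem \ref{thm:cim1}: existence of an invariant measure is equivalent to finiteness of $\Sigma_\infty$, and in that case every invariant measure is a convolution $\nu * \mathcal{N}(0,\Sigma_\infty)$ with $\nu$ invariant for the deterministic flow $\dot X = AX + b$. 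I would first fix the $A$-invariant spectral splitting $\RR^d = S_0 \oplus S_-$, where $S_-$ is the stable subspace of the statement and $S_0$ is the span of the generalized eigenspaces attached to eigenvalues on $i\RR$ (including $0$). Finiteness of $\Sigma_\infty$ forces the columns of $B, AB, \dots, A^{d-1}B$ to lie in $S_-$, so $BB^T$ is carried by $S_-$ and $S_0 \subseteq \ker(BB^T)$; this is the classical shadow of $V_0 \subseteq \ker(C)$ (Theorem \ref{prop:perif}).

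For the implication (2) $\Rightarrow$ (1) I would argue directly. In the normal form the SDE decouples: the $S_-$-component is a stable Ornstein-Uhlenbeck process driven by all of the noise, with unique invariant law $\mathcal{N}(m_\infty,\Sigma_\infty)$, while the $S_0$-component solves the deterministic equation $\dot X = \widetilde A_0 X + \widetilde b_0$, which after the shift removing $\widetilde b_0 \in {\rm supp}(\widetilde A_0)$ is the quasi-periodic rotation generated by $\widetilde A_0$. Hence $\nu_0 \otimes \mathcal{N}(m_\infty,\Sigma_\infty)$ is invariant for any probability measure $\nu_0$ on $S_0$ invariant under these rotations, and choosing $\nu_0$ with a rotation-invariant Lebesgue density $f_1$ produces an invariant measure with density $f_1(x_1)f_2(x_2)$; this is absolutely continuous precisely because $\Sigma_\infty$ is nondegenerate on $S_-$, so $f_2$ is a genuine Gaussian density. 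The factorized description in the last part of the statement drops out of this construction, noting that the linear change of basis $M$ defining $\widetilde T = T^M$ preserves the absolutely continuous class.

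For (1) $\Rightarrow$ (2) I would run the classical counterparts of Lemmas \ref{lem:nonex}, \ref{lem:scb} and \ref{lem:transmodes}. Since $BB^T$ is carried by $S_-$, projecting the SDE onto $S_0$ along $S_-$ yields a purely deterministic equation, so the $S_0$-marginal of any invariant measure is invariant for the flow $e^{t\widetilde A_0}$. If $\widetilde A_0$ had a nontrivial Jordan block at an eigenvalue $i\phi$, the orbits would grow polynomially and no invariant probability measure could live on $S_0$ — the classical analogue of Lemma \ref{lem:nonex}, forcing $\widetilde A_0$ to be semisimple with eigenvalues $0$ and $\pm i\phi_j$. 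A real canonical form for this semisimple $\widetilde A_0$ (an ordinary linear change of basis, with no symplectic constraint — the easier analogue of Lemma \ref{lem:scb}) puts $\widetilde A_0$ into the claimed block shape. On $\ker(\widetilde A_0)$ the flow is the pure drift $t \mapsto t\,\widetilde b_0^{\mathrm{ker}}$, which admits no invariant probability measure unless $\widetilde b_0^{\mathrm{ker}} = 0$, i.e. $\widetilde b_0 \in {\rm supp}(\widetilde A_0)$ (the analogue of Lemma \ref{lem:transmodes}). Finally, absolute continuity on all of $\RR^d$ forces the Gaussian factor to have full support on $S_-$: an uncontrollable stable direction would confine the measure to a proper affine subspace, so the reachable subspace must exhaust $S_-$ and $\Sigma_\infty > 0$ there.

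The hard part will be the clean handling of absolute continuity, which must do double duty. On $S_0$ it has to be reconciled with the fact that the marginal is invariant under a rigid rotation, so that absolute continuity of the full measure is equivalent to the existence of a rotation-invariant Lebesgue density together with semisimplicity and the drift condition; on $S_-$ it is equivalent to nondegeneracy of $\Sigma_\infty$, i.e. controllability of $(\widetilde A_-, \widetilde B_-)$. Making precise that the convolution $\nu * \mathcal{N}(0,\Sigma_\infty)$ is absolutely continuous with respect to $\mathscr{L}$ if and only if $\nu$ is absolutely continuous on $S_0$ and $\Sigma_\infty$ is nondegenerate on $S_-$, and that this is exactly the structural content of condition (2), is the technical heart of the argument; everything else is a routine translation of the quantum proof.
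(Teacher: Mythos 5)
Your proposal is correct and follows exactly the route the paper intends: the paper gives no explicit proof of this theorem, stating only that it is obtained by specializing Theorem \ref{thm:cim1} and transporting the techniques behind Theorems \ref{thm:main} and \ref{theo:invst} to the commutative setting, which is precisely what you carry out (including the observation that the symplectic bookkeeping of Lemmas \ref{lem:so} and \ref{lem:scb} collapses to an ordinary real canonical form, and that the nondegeneracy of $\Sigma_\infty$ on the stable subspace is where absolute continuity does its work). One small caution: when ruling out Jordan blocks of $\widetilde{A}_0$ you should invoke absolute continuity of the $S_0$-marginal so that the classical Riemann--Lebesgue lemma applies, rather than asserting that no invariant probability measure at all can live on $S_0$ (the point mass $\delta_0$ is always invariant for the linear flow); this hypothesis is available under condition 1 and the argument then closes exactly as in Lemma \ref{lem:nonex}.
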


We point out that the proof of the result in the classical case is easier due to commutativity or, equivalently, the lack of the symplectic structure (which required the extra work in Lemmas \ref{lem:so} and \ref{lem:scb}).

Finally, let us comment on the irreducibility condition in Corollary \ref{coro:irr}: using Proposition 2 in \cite{Za85} one can show that the process is $\mathscr{L}$-irreducible (see Section 4.2 in \cite{MTG09}) if and only if
\begin{equation} \label{eq:clirrcond}
\text{there are not nontrivial $A^T$-invariant subspaces in }\ker(BB^T).
\end{equation}
This is the classical counterpart of condition in Eq. \eqref{eq:irrcond} with the only difference that there is no contribution from the symplectic matrix.


\section*{Aknowledgements.}
The authors are grateful to Franco Fagnola for several useful discussions and part of their work was supported by the University of Nottingham and the University of Tübingen’s funding as part of the Excellence Strategy of the German Federal and State Governments, in close collaboration with the University of Nottingham. The authors are also grateful to the organizers of the Birs Workshop on Quantum Markov Semigroups and Channels held in Oaxaca for the opportunity to interact among them and with other experts in the field. F.G. has been partially supported by the MUR grant Dipartimento di Eccellenza
2023–2027 of Dipartimento di Matematica, Politecnico di Milano and by "INdAM -GNAMPA Project" CUP E53C23001670001. D.P. has been partially supported by the MUR grant Dipartimento di Eccellenza of Dipartimento di Matematica, Università degli studi di Genova and is a member of the INdAM-GNAMPA group.

\appendix

\section{Equivalence between Poincar\'e inequality and exponential decay of the norm in the general case} \label{app:PI}
Let ${\cal H}$ denote an Hilbert space and $S_t$ a strongly continuous contraction semigroup on ${\cal H}$, with generator $A$. Let us assume that there exists a non trivial orthogonal projection $E$ such that $S_t E=E S_t$.

Let us define
\begin{equation}\label{eq:gap1}
{\rm gap}(A):=\max\{\alpha \geq 0 : \|S_tf\| \leq e^{-\alpha t}, \, \forall f \in {\cal H}, \, \|f\|=1, \, E(f)=0\}\end{equation}

\begin{theo} \label{theo:pi}
For every linear space $D \subseteq {\cal H}$ such that
\begin{itemize}
    \item $\mathscr{E}(f):=\left.-\frac{d}{dt} \frac{\|S_tf\|^2}{2} \right|_{t=0}$ is well defined for every $f \in D$,
    \item $S_t(D) \subseteq D$ for all $t \geq 0$,
    \item $({\rm Id}-E)(D)\cap D$ is dense in the range of ${\rm Id}-E$,
\end{itemize}
one has
\begin{equation} \label{eq:gap2}
  {\rm gap}(A)=\inf\{\mathscr{E}(f) : \forall f \in D, \, \|f\|=1, \, E(f)=0\}.  
\end{equation}
In particular,
\begin{equation}\label{eq:gap3}
{\rm gap}(A)=\inf\{-\Re(\langle f, A f \rangle : \forall f \in D(A), \, \|f\|=1, \, E(f)=0\}.\end{equation}
\end{theo}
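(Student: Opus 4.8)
The plan is to reduce everything to the $S_t$-invariant subspace $\mathcal{K} := \ker E = \mathrm{Ran}(\mathrm{Id}-E)$ and to establish the single identity $\mathrm{gap}(A) = \inf\{\mathscr{E}(f) : f \in D,\ \|f\|=1,\ E(f)=0\}$; the final formula \eqref{eq:gap3} is then the special case $D = D(A)$, where $\mathscr{E}(f) = -\Re\langle f, Af\rangle$ and the three hypotheses hold automatically ($D(A)$ is $S_t$-invariant, $E$ commutes with the resolvent and hence preserves $D(A)$, and $(\mathrm{Id}-E)D(A)$ is dense in $\mathcal{K}$ because $D(A)$ is dense and $\mathrm{Id}-E$ is bounded). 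Since $E$ commutes with $S_t$, the subspace $\mathcal{K}$ is invariant and $E(S_tf)=S_t(Ef)$; moreover $(\mathrm{Id}-E)(D)\cap D = \{f\in D : Ef=0\}$, so the third hypothesis is precisely that this set is dense in $\mathcal{K}$.

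First I would show that $\inf_D\mathscr{E}$ dominates every admissible rate. If $\alpha\ge 0$ satisfies the bound in \eqref{eq:gap1}, then $\|S_tf\|\le e^{-\alpha t}\|f\|$ for all $f\in\mathcal{K}$ by scaling, and for $f\in D$ with $Ef=0$, $\|f\|=1$ the one-sided derivative defining $\mathscr{E}$ gives
\begin{equation*}
\mathscr{E}(f)=\lim_{t\to 0^+}\frac{\|f\|^2-\|S_tf\|^2}{2t}\ \ge\ \lim_{t\to 0^+}\frac{1-e^{-2\alpha t}}{2t}=\alpha .
\end{equation*}
Taking the infimum over such $f$ and then the supremum over admissible $\alpha$ yields $\inf_D\mathscr{E}\ge \mathrm{gap}(A)$.

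For the reverse inequality I would show that $\mu:=\inf_D\mathscr{E}$ is itself an admissible rate, by a Grönwall argument along trajectories. Fix $f\in D$ with $Ef=0$; by invariance $S_tf\in D$ and $E(S_tf)=0$ for all $t$, so $\phi(t):=\|S_tf\|^2$ is continuous with right derivative $\phi'_+(t)=-2\mathscr{E}(S_tf)\le -2\mu\,\phi(t)$. Hence $t\mapsto e^{2\mu t}\phi(t)$ is continuous with non-positive upper right Dini derivative, therefore non-increasing, which gives $\|S_tf\|\le e^{-\mu t}\|f\|$. Because $\{f\in D:Ef=0\}$ is dense in $\mathcal{K}$ and each $S_t$ is bounded, this bound extends to all of $\mathcal{K}$, so $\mu$ is admissible and $\mathrm{gap}(A)\ge\mu$; combined with the previous step this also shows the maximum in \eqref{eq:gap1} is attained and equals $\mu$.

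The two differentiation steps are routine; the one point that needs genuine care is the passage from a non-positive right derivative to the monotonicity of $e^{2\mu t}\phi(t)$, which I would settle with the standard real-variable lemma that a continuous function with everywhere non-positive upper right Dini derivative is non-increasing, so that no two-sided differentiability of $\phi$ is required. The only other ingredient is the density extension in the last step, which is exactly the role of the third hypothesis on $D$.
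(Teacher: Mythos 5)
Your proposal is correct and follows essentially the same route as the paper: the lower bound $\inf_D\mathscr{E}\ge\mathrm{gap}(A)$ via the one-sided derivative at $t=0$, and the upper bound via the differential inequality showing $e^{2\mu t}\|S_tf\|^2$ is non-increasing along trajectories, followed by the density extension. Your explicit appeal to the Dini-derivative lemma is a slightly more careful justification of the monotonicity step that the paper treats informally, but it is the same argument.
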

\begin{proof}
Let $\lambda$ denote the RHS of Eq. \eqref{eq:gap2}. Notice that, due to the contractivity of $S_t$, one has that $\lambda \geq 0$.

\bigskip First we prove that ${\rm gap}(A) \leq \lambda$. Indeed, for every $f\in D$, $\|f\|=1$, $E(f)=0$, one has
$$\mathscr{E}(f)=\lim_{t \rightarrow 0^+} \frac{1-\|S_tf\|^2}{2t} \geq \lim_{t \rightarrow 0^+} \frac{1-e^{-2{\rm gap}(A)t}}{2t} ={\rm gap}(A).$$
Therefore, ${\rm gap}(A) \leq \lambda$.

\bigskip Now, we are going to prove the reverse inequality. Let us consider $f \in D$ such that $\|f\|=1$ and $E(f)=0$; notice that by hypotheses $S_t f \in D$ and $ES_t(f)=0$ for every $t \geq 0$. By the definition of $\lambda$, one has
$$2\lambda \|S_tf\|^2\leq \mathscr{E}(S_tf)= -\frac{d}{dt}\|S_t f \|^2.$$
Therefore,
$$\frac{d}{dt}(\|S_t f \|^2e^{2\lambda t})=2(\lambda \|S_t f \|^2-\mathscr{E}(S_tf)) e^{2\lambda t} \leq 0,$$
which means that $\|S_t f \|^2e^{2\lambda t}$ is a monotone non-increasing function, hence
$$\|S_t f \|^2e^{2\lambda t} \leq 1 \Leftrightarrow \|S_t f \| \leq e^{-\lambda t}.$$
By the density of $({\rm Id}-E)(D)\cap D$ in the range of ${\rm Id-E}$, it follows that $\lambda \leq {\rm gap}(A)$ as well and we are done.

The second characterization follows from the fact that $D(A)$ satisfies all the requirements for $D$ above: indeed, for all $f \in D(A)$, $\|S_tf\|^2$ is differentiable and
$$\mathscr{E}(f)=-\Re(\langle f,Af \rangle).$$
Moreover it follows from the general theory of strongly continuous semigroups that $S_t(D(A)) \subseteq D(A)$ for all $t \geq 0$  and that, since $S_tE =ES_t$ for every $t \geq 0$, $({\rm Id}-E)(D(A))\cap D(A)$ is dense in the range of ${\rm Id}-E$.
\end{proof}

\bibliographystyle{abbrv}
\bibliography{biblio.bib}

\begin{thebibliography}{10}

\bibitem{AFP21}
J.~Agredo, F.~Fagnola, and D.~Poletti.
\newblock Gaussian quantum markov semigroups on a one-mode fock space: Irreducibility and normal invariant states.
\newblock {\em Open Systems \& Information Dynamics}, 28(01):2150001, 2021.

\bibitem{AFP22}
J.~Agredo, F.~Fagnola, and D.~Poletti.
\newblock The decoherence-free subalgebra of gaussian quantum markov semigroups.
\newblock {\em Milan Journal Of Mathematics}, 90(1):257--289, 2022.

\bibitem{BW24}
A.~Barchielli and R.~F. Werner.
\newblock Hybrid quantum-classical systems: Quasi-free markovian dynamics.
\newblock {\em International Journal of Quantum Information}, 22(05):2440002, 2024.

\bibitem{BO03}
P.~Blanchard and R.~Olkiewicz.
\newblock Decoherence induced transition from quantum to classical dynamics.
\newblock {\em Rev. Math. Phys.}, 15(3):217--243, 2003.

\bibitem{CF00}
R.~Carbone and F.~Fagnola.
\newblock Exponential {$L^2$}-convergence of quantum markov semigroups on {B(H)}.
\newblock {\em Mathematical Notes}, 68(4):452--463, Oct. 2000.

\bibitem{CG21}
R.~Carbone and F.~Girotti.
\newblock Absorption in invariant domains for semigroups of quantum channels.
\newblock {\em Ann. Henri Poincar\'{e}}, 22(8):2497--2530, 2021.

\bibitem{CJ20}
R.~Carbone and A.~Jen\v{c}ov{\'a}.
\newblock On period, cycles and fixed points of a quantum channel.
\newblock {\em Ann. Henri Poincar\'e}, 21(1):155--188, 2020.

\bibitem{CP16}
R.~Carbone and Y.~Pautrat.
\newblock Open quantum random walks: reducibility, period, ergodic properties.
\newblock {\em Ann. Henri Poincar\'e}, 17(1):99--135, 2016.

\bibitem{CSU13}
R.~Carbone, E.~Sasso, and V.~Umanit\`a.
\newblock Decoherence for quantum {M}arkov semi-groups on matrix algebras.
\newblock {\em Ann. Henri Poincar\'e}, 14(4):681--697, 2013.

\bibitem{CFL00}
F.~Cipriani, F.~Fagnola, and J.~M. Lindsay.
\newblock Spectral analysis and {F}eller property for quantum {O}rnstein-{U}hlenbeck semigroups.
\newblock {\em Comm. Math. Phys.}, 210(1):85--105, 2000.

\bibitem{Da70}
E.~B. Davies.
\newblock Quantum stochastic processes. {II}.
\newblock {\em Comm. Math. Phys.}, 19:83--105, 1970.

\bibitem{DVV79}
B.~Demoen, P.~Vanheuverzwijn, and A.~Verbeure.
\newblock Completely positive quasi-free maps of the {CCR}-algebra.
\newblock {\em Reports on Mathematical Physics}, 15(1):27--39, 1979.

\bibitem{De17}
J.~Derezi\'nski.
\newblock Bosonic quadratic {H}amiltonians.
\newblock {\em J. Math. Phys.}, 58(12):121101, 45, 2017.

\bibitem{EL77}
D.~Evans and J.~Lewis.
\newblock Some semigroups of completely positive maps on the {CCR} algebra.
\newblock {\em Journal of Functional Analysis}, 26(4):369--377, 1977.

\bibitem{Ev77}
D.~E. Evans.
\newblock Irreducible quantum dynamical semigroups.
\newblock {\em Comm. Math. Phys.}, 54(3):293--297, 1977.

\bibitem{EHK78}
D.~E. Evans and R.~H\o~egh Krohn.
\newblock Spectral properties of positive maps on {$C\sp*$}-algebras.
\newblock {\em J. London Math. Soc. (2)}, 17(2):345--355, 1978.

\bibitem{FP24}
F.~Fagnola and D.~Poletti.
\newblock A note on invariant states of gaussian quantum markov semigroups.
\newblock {\em Infinite Dimensional Analysis, Quantum Probability and Related Topics}, 0(0):2440004, 2024.

\bibitem{FPSU24}
F.~Fagnola, D.~Poletti, E.~Sasso, and V.~Umanità.
\newblock The spectral gap of a gaussian quantum markovian generator, 2024.

\bibitem{FR03}
F.~Fagnola and R.~Rebolledo.
\newblock Transience and recurrence of quantum {M}arkov semigroups.
\newblock {\em Probab. Theory Related Fields}, 126(2):289--306, 2003.

\bibitem{FR06}
F.~Fagnola and R.~Rebolledo.
\newblock {\em Notes on the Qualitative Behaviour of Quantum Markov Semigroups}, pages 161--205.
\newblock Springer Berlin Heidelberg, Berlin, Heidelberg, 2006.

\bibitem{FSU19}
F.~Fagnola, E.~Sasso, and V.~Umanit\`a.
\newblock The role of the atomic decoherence-free subalgebra in the study of quantum {M}arkov semigroups.
\newblock {\em J. Math. Phys.}, 60(7):072703, 15, 2019.

\bibitem{Fo89}
G.~B. Folland.
\newblock {\em Harmonic Analysis in Phase Space. (AM-122), Volume 122}.
\newblock Princeton University Press, Princeton, 1989.

\bibitem{Gi22a}
F.~Girotti.
\newblock Absorption and fixed points for semigroups of quantum channels.
\newblock In {\em Infinite dimensional analysis, quantum probability and applications}, volume 390 of {\em Springer Proc. Math. Stat.}, pages 183--195. Springer, Cham, 2022.

\bibitem{Gi22}
F.~Girotti.
\newblock {\em Absorption in Invariant Domains for Quantum Markov Evolutions}.
\newblock PhD thesis, Universit{\`a} degli Studi di Pavia, 2022.

\bibitem{GZ03}
A.~Guionnet and B.~Zegarlinksi.
\newblock {\em Lectures on Logarithmic Sobolev Inequalities}, pages 1--134.
\newblock Springer Berlin Heidelberg, Berlin, Heidelberg, 2003.

\bibitem{He10}
M.~Hellmich.
\newblock Quasi-free semigroups on the {CCR} algebra.
\newblock {\em Reports on Mathematical Physics}, 66(2):277--298, 2010.

\bibitem{He11}
M.~Hellmich.
\newblock Quantum dynamical semigroups and decoherence.
\newblock {\em Adv. Math. Phys.}, pages Art. ID 625978, 16, 2011.

\bibitem{Ho11}
A.~Holevo.
\newblock {\em Probabilistic and Statistical Aspects of Quantum Theory}.
\newblock Publications of the Scuola Normale Superiore. Scuola Normale Superiore, 2011.

\bibitem{Ko84}
H.~Kosaki.
\newblock Applications of the complex interpolation method to a von neumann algebra: Non-commutative {$L^p$}-spaces.
\newblock {\em Journal of Functional Analysis}, 56(1):29--78, 1984.

\bibitem{Li89}
T.~M. Liggett.
\newblock Exponential {$L^2$} convergence of attractive reversible nearest particle systems.
\newblock {\em Ann. Probab.}, 17(2):403--432, 1989.

\bibitem{LMP20}
A.~Lunardi, G.~Metafune, and D.~Pallara.
\newblock The {O}rnstein-{U}hlenbeck semigroup in finite dimension.
\newblock {\em Philos. Trans. Roy. Soc. A}, 378(2185):20200217, 15, 2020.

\bibitem{MTG09}
S.~Meyn, R.~L. Tweedie, and P.~W. Glynn.
\newblock {\em Markov Chains and Stochastic Stability}.
\newblock Cambridge Mathematical Library. Cambridge University Press, 2 edition, 2009.

\bibitem{Norris}
J.~R. Norris.
\newblock {\em Markov Chains}.
\newblock Cambridge Series in Statistical and Probabilistic Mathematics. Cambridge University Press, 1997.

\bibitem{Pa12}
K.~R. Parthasarathy.
\newblock The symmetry group of gaussian states in $\text{L}^2(\mathbb{R}^n)$.
\newblock In {\em Prokhorov and Contemporary Probability Theory: In Honor of Yuri V. Prokhorov}, pages 349--369. Springer, 2012.

\bibitem{PO22}
D.~Poletti.
\newblock Characterization of gaussian quantum markov semigroups.
\newblock {\em Infinite Dimensional Analysis, Quantum Probability and Related Topics}, 25(03):2250014, 2022.

\bibitem{SU24}
E.~Sasso and V.~Umanit\`a.
\newblock The general structure of the decoherence-free subalgebra for uniformly continuous quantum {M}arkov semigroups.
\newblock {\em J. Math. Phys.}, 64(4):Paper No. 042703, 15, 2023.

\bibitem{TN22}
F.~Toscano and F.~Nicacio.
\newblock Thermal equilibrium in gaussian dynamical semigroups.
\newblock {\em Phys. Rev. A}, 106:062207, Dec 2022.

\bibitem{Um06}
V.~Umanit\`a.
\newblock Classification and decomposition of quantum {M}arkov semigroups.
\newblock {\em Probab. Theory Related Fields}, 134(4):603--623, 2006.

\bibitem{Za85}
J.~Zabczyk.
\newblock Structural properties and limit behaviour of linear stochastic systems in hilbert spaces.
\newblock {\em Banach Center Publications}, 14(1):591--609, 1985.

\bibitem{ZS70}
M.~Zakai and J.~Snyders.
\newblock Stationary probability measures for linear differential equations driven by white noise.
\newblock {\em Journal of Differential Equations}, 8(1):27--33, 1970.

\bibitem{DZ22}
G.~Zhang and Z.~Dong.
\newblock Linear quantum systems: A tutorial.
\newblock {\em Annual Reviews in Control}, 54:274--294, 2022.

\end{thebibliography}
\end{document}